  \newenvironment{proof}{\vspace{1ex}\noindent{\bf Proof:}}{\hspace*{\fill}$\blacksquare$\vspace{1ex}}
  \newenvironment{proofof}[1]{\vspace{1ex}\noindent{\bf Proof of #1:}}{\hspace*{\fill}$\blacksquare$\vspace{1ex}}
  \newtheorem{theorem}{Theorem} 
  \newtheorem{lemma} [theorem] {Lemma}
  \newtheorem{conjecture} [theorem] {Conjecture}
\newcommand{\Bcal}[0]{\ensuremath{{\mathcal B}}}
\newcommand{\Ccal}[0]{\ensuremath{{\mathcal C}}}
\newcommand{\Dcal}[0]{\ensuremath{{\mathcal D}}}
\newcommand{\Pcal}[0]{\ensuremath{{\mathcal P}}}
\newcommand{\Qcal}[0]{\ensuremath{{\mathcal Q}}}
\newcommand{\Xcal}[0]{\ensuremath{{\mathcal X}}}
\newcommand{\Ycal}[0]{\ensuremath{{\mathcal Y}}}
\newcommand{\eR}[0]{\ensuremath{ \mathbb R}}
\newcommand{\eN}[0]{\ensuremath{ \mathbb N}}
\newcommand{\Zed}[0]{\ensuremath{ \mathbb Z}}
\newcommand{\Vtil}[0]{\tilde{V}}
\newcommand{\Xtil}[0]{\tilde{X}}
\newcommand{\Cscr}[0]{\ensuremath{{\mathscr C}}}
\newcommand{\Rscr}[0]{\ensuremath{{\mathscr R}}}
\newcommand{\Pee}[0]{\ensuremath{{\mathbb P}}}
\newcommand{\Ee}[0]{\ensuremath{{\mathbb E}}}
\newcommand{\isd}[0]{\hspace{.2ex} \raisebox{-.1ex}{$=$} \hspace{-1.5ex} 
\raisebox{1ex}{{$\scriptstyle d$}} \hspace{.8ex} }
 \newcommand{\eps}{\varepsilon}
\DeclareMathOperator{\Po}{Po}
\DeclareMathOperator{\dd}{d}
\DeclareMathOperator{\Var}{Var}
\newcommand{\GPo}[0]{\ensuremath{G_{\text{Po}}}}
\newcommand{\Gammatil}[0]{\ensuremath{\tilde{\Gamma}}}
\newcommand{\lambdac}[0]{\ensuremath{\lambda_{\text{crit}}}}
\newcommand{\nuc}[0]{\ensuremath{\nu_{\text{crit}}}}
\title{Law of large numbers for the largest component in a hyperbolic model of complex networks}
\author{Nikolaos Fountoulakis\thanks{Research supported by a Marie Curie Career Integration Grant
 PCIG09-GA2011-293619.}\\
\small School of Mathematics\\[-0.8ex]
\small University of Birmingham\\[-0.8ex] 
\small United Kingdom\\
\small\tt n.fountoulakis@bham.ac.uk\\
\and
Tobias M\"uller\thanks{Research partially supported by a Vidi grant from Netherlands Organisation for Scientific Research (NWO)} \\
\small Mathematical Institute\\[-0.8ex]
\small Utrecht University\\[-0.8ex]
\small  The Netherlands\\
\small\tt t.muller@uu.nl
}
\begin{document}

\maketitle

\begin{abstract} 
We consider the component structure of a recent model of random graphs on the hyperbolic plane that was introduced 
by Krioukov et al. 
The model exhibits a power law degree sequence, small distances and clustering, features that are associated with so-called
complex networks. The model is controlled by two parameters $\alpha$ and $\nu$ where, roughly speaking, $\alpha$ controls the 
exponent of the power law and $\nu$ controls the average degree.
Refining earlier results, we are able to show a law of large numbers for the largest component.
That is, we show that the fraction of points in the largest component tends in probability to a constant $c$ that depends only on
$\alpha,\nu$, while all other components are sublinear. We also study how $c$ depends on $\alpha, \nu$. 
To deduce our results, we introduce a local approximation of the random graph by a continuum percolation model on 
$\mathbb{R}^2$ that may be of independent interest. 
\medskip

\noindent
\texttt{keywords}: random graphs, hyperbolic plane, giant component, law of large numbers

\noindent
\emph{2010 AMS Subj. Class.}: 05C80, 05C82; 60D05, 82B43
\end{abstract}


\section{Introduction}
The component structure of random graphs and in particular the size of the largest component has been a central problem in the 
theory of random graphs as well as in percolation theory. 
Already from the founding paper of random graph theory~\cite{ErRenConn} the emergence of the \emph{giant component} is a 
central theme that recurs, through the development of more and more sophisticated results.    

In this paper, we consider a random graph model that was introduced recently by Krioukov et al. in~\cite{ar:Krioukov}.
The aim of that work was the development of a geometric framework for the analysis of properties of the so-called 
\emph{complex networks}. This term describes a diverse class of networks that emerge in a range of human activities or 
biological processes and includes social networks, scientific collaborator networks as well as computer networks, such as the Internet, and the 
power grid -- see for example~\cite{BarAlb}.  
These are networks that consist of a very large number of heterogeneous nodes (nowadays social networks such as the Facebook 
or the Twitter have billions of users), and they are sparse. 
However, locally they are dense - this is the \emph{clustering phenomenon} which makes it more likely for two 
vertices that have a common neighbour to be connected.  
Furthermore, these networks are \emph{small worlds}: 
almost all pairs of vertices that are in the same component are within a 
short distance from each other. 
But probably the most strikingly ubiquitous property they have is that their degree distribution is \emph{scale free}. 
This means that its tail follows a \emph{power law}, usually with exponent between 2 and 3 (see for example~\cite{BarAlb}). 
Further discussion on these characteristics can be found in the books of Chung and Lu~\cite{ChungLuBook+} and of Dorogovtsev~\cite{Dor}.

In the past decade, several models have appeared in the literature aiming at capturing these features. 
Among the first was the \emph{preferential attachment model}. 
This term describes a class of models of randomly growing graphs whose aim is to capture 
the following phenomenon: nodes which are already popular retain their popularity or tend to become more popular as 
the network grows. 
It was introduced by Barab\'asi and Albert~\cite{BarAlb} and subsequently defined and studied rigorously 
by Bollob\'as, Riordan and co-authors (see for example~\cite{DegSeq},~\cite{Diam}).

Another extensively studied model was defined by Chung and Lu~\cite{ChungLu1+}, \cite{ChungLuComp+}.
In some sense, this is a generalisation of the standard binomial model $G(n,p)$. 
Each vertex is equipped with a weight, which effectively corresponds to its expected degree, and every two vertices are 
joined~\emph{independently}
of every other pair with probability that is proportional to the product of their weights. 
If the distribution of these weights follows a power law, then it turns out that the degree distribution of the resulting random graph follows a power law as well. 
This model is a special case of an \emph{inhomogeneous random graph} of rank 1~\cite{BolJanR}. 

All these models have their shortcomings and none of them incorporates \emph{all} the above features.
For example, the Chung--Lu model exhibits a power law 
degree distribution (provided the weights of the vertices are suitably chosen)  and average distance of order $O(\log \log N)$ (when
the exponent of the power law is between 2 and 3, see~\cite{ChungLu1+}), but it is locally tree like (around most vertices) 
and therefore it does not exhibit clustering. This is also the situation in the Barab\'asi-Albert model. 

For the the Chung-Lu model, this is the case as the pairs of vertices form edges independently. 
But for clustering to appear, it has to be the case that for two edges that share an endvertex the probability that their other two
endvertices are joined must be higher compared to that where we assume nothing about these 
edges. 
This property is naturally present in random graphs that are created over metric spaces, such as random geometric graphs. 
In a random geometric graph, the vertices are a random set of points on a given metric space, with any two of them being 
adjacent if their distance is smaller than some threshold. 

The model of Krioukov et al.~\cite{ar:Krioukov} does exactly this. It introduces a geometric framework on the theory of complex networks and it is based on the hypothesis that hyperbolic geometry underlies these networks. 
In fact, it turns out that the power-law degree distribution emerges naturally from the underlying hyperbolic geometry. 
They defined an associated random graph model, which we will describe in detail in the next section, and considered some of its
typical properties. 
More specifically, they observed a power-law degree sequence as well as clustering properties. 
These characteristics were later verified rigorously by Gugelmann et al.~\cite{ar:Gugel} as well as
by the second author~\cite{ar:Foun13+} and Candellero and the second author~\cite{Candellero} (these two papers are on a 
different, but closely related model).

The aim of the present work is the study the component structure of such a random graph. More specifically, we consider the number of vertices that are contained in a largest component of the graph. 
In previous work~\cite{BFMgiantEJC} with M.~Bode, we have determined the range of the parameters, in which the so-called \emph{giant component} 
emerges. We have shown that in this model this range essentially coincides with the range in which the exponent of the power 
law is smaller than 3. What is more, when the exponent of the power law is larger than 3, the random graph typically consists of many relatively small components, no matter how 
large the average degree of the graph is. 
This is in sharp contrast with the classical Erd\H{o}s-R\'enyi model (see~\cite{Bol01} or~\cite{JLR}) as well as with
the situation of random geometric graphs on Euclidean spaces (see~\cite{bk:Penrose}) where the dependence on the average degree is crucial. 

In the present paper, we give a complete description of this range and, furthermore, we show that the order of the largest 
connected component follows a law of large numbers. Our proof is based on the local approximation of the random graph model 
by an infinite continuum percolation model on $\mathbb{R}^2$, which may be of independent interest. We show that percolation 
in this model coincides with the existence of a giant component in the model of Krioukov et al. 
We now proceed with the definition of the model.

\subsection{The Krioukov-Papadopoulos-Kitsak-Vahdat-Bogu\~n\'a model} 

Let us recall very briefly some facts about the hyperbolic plane $\mathbb H$.
The hyperbolic plane is an unbounded 2-dimensional manifold of constant negative curvature $-1$.
There are several representations of it, such as the upper half-plane model, the Beltrami-Klein disk model and the Poincar\'e disk model. 
The Poincar\'e disk model is defined if we equip the unit disk ${\mathbb D} := \{(x,y) \in \mathbb{R}^2 \ : x^2+y^2 <1 \}$
with the metric given\footnote{%
Thus, the length of a curve $\gamma : [0,1]\to{\mathbb D}$ is given by 
$2 \int_0^1\frac{\sqrt{(\gamma_1'(t))^2+(\gamma_2'(t))^2}}{1-\gamma_1^2(t)-\gamma_1^2(t)}{\dd}t$.
} by the differential form ${\dd}s^2 = 4~\frac{dx^2 + dy^2}{(1-x^2-y^2)^2}$.
 For an introduction to hyperbolic geometry and the properties of the hyperbolic plane, the reader may refer to the book of Stillwell~\cite{bk:Stillwell}.

A very important property that differentiates the hyperbolic plane from the euclidean is the rate of growth of volumes.  
In $\mathbb H$, a disk of radius $r$ (i.e., the set of points at hyperbolic distance at most $r$ to a given point) has area equal to 
$2\pi(\cosh(r) - 1)$ and length of circumference equal to $2\pi\sinh(r)$.  
Another basic geometric fact that we will use later in our proofs is the so-called {\em hyperbolic cosine rule}. This
states that if $A,B,C$ are distinct points on the hyperbolic plane, and we denote by $a$ the distance between $B,C$, by $b$ the distance between 
$A,C$, by $c$ the distance between $A,B$ and by $\gamma$ the angle (at $C$) between the shortest $AC$- and $BC$-paths, then it holds 
that 
$\cosh(c) = \cosh(a)\cosh(b) - \cos(\gamma)\sinh(a)\sinh(b)$.

We can now introduce the model we will be considering in this paper.
We call it the Krioukov-Papadopoulos-Kitsak-Vahdat-Bogu\~n\'a-model, after its inventors, but for convenience we will 
abbreviate this to KPKVB-model.
The model has three parameters: the number of vertices $N$, which we think of as the parameter that tends to infinity, 
and $\alpha, \nu > 0$ which are fixed (that is, do not depend on $N$).
Given $N, \nu, \alpha$, we set $R := 2\log(N/\nu)$. 
Consider the Poincar\'e disk representation and let $O$ be the origin of the disk. 
We select $N$ points independently at random from the disk of (hyperbolic) radius $R$ centred at $O$, which we denote 
by $\Dcal_{R}$, according to the following probability distribution. 
If the random point $u$ has polar coordinates $(r, \theta)$, then $\theta, r$ are independent, 
$\theta$ is uniformly distributed in $(0,2\pi]$ and 
the cumulative distribution function of $r$ is given by:

\begin{equation}\label{eq:cdf}
 F_{\alpha,R}(r) = \left\{\begin{array}{cl}
 0 & \text{ if $r < 0$, } \\
 \frac{\cosh(\alpha r) - 1}{\cosh(\alpha R) - 1} & \text{ if } 0 \leq r \leq R, \\
 1 & \text{ if $r > R$ }
\end{array} \right.
\end{equation}

\noindent
Note that when $\alpha = 1$, then this is simply the uniform distribution on $\Dcal_R$. 
We label the random points as $1,\ldots, N$. 

It can be seen that the above probability distribution corresponds precisely to the polar coordinates of a point taken uniformly at random from the disk of radius $R$ around the origin in the hyperbolic plane of curvature\footnote{%
That is the natural analogue of the hyperbolic plane, in which the Gaussian curvature equals $-\alpha^2$ at every point.
One way to obtain (a model of) the the hyperbolic plane of curvature $-\alpha^2$ is to multiply the differential form in the 
Poincar\'e disk model by a factor $1/\alpha^2$. 
} $-\alpha^2$. 
Indeed, a simple calculation shows that the area of disc of radius $r$ on the hyperbolic plane of curvature 
$-\alpha^2$ is equal to $\cosh (\alpha r) -1$. 
So the above distribution can be viewed as selecting the $N$ points uniformly on a disc of radius $R$ on the hyperbolic 
plane of curvature $-\alpha^2$ and then projecting them onto the hyperbolic plane of curvature -1, preserving 
polar coordinates. This is where we create the random graph. 
The set of the $N$ labeled points will be the vertex set of our random graph and we denote it by $\mathrm{V}$. 
The KPKVB-random graph, denoted $G (N; \alpha, \nu)$, is formed when we join each pair of vertices, if and only if they are within 
(hyperbolic) distance $R$. Note this is precisely the radius of the disk $\Dcal_R$ that the points live on.
Figures~\ref{fig:KPKVB},\ref{fig:KPKVB_1} show examples of such a random graph on $N=500$ vertices.
\begin{figure}[h]
\centering

\includegraphics[scale = 0.27]{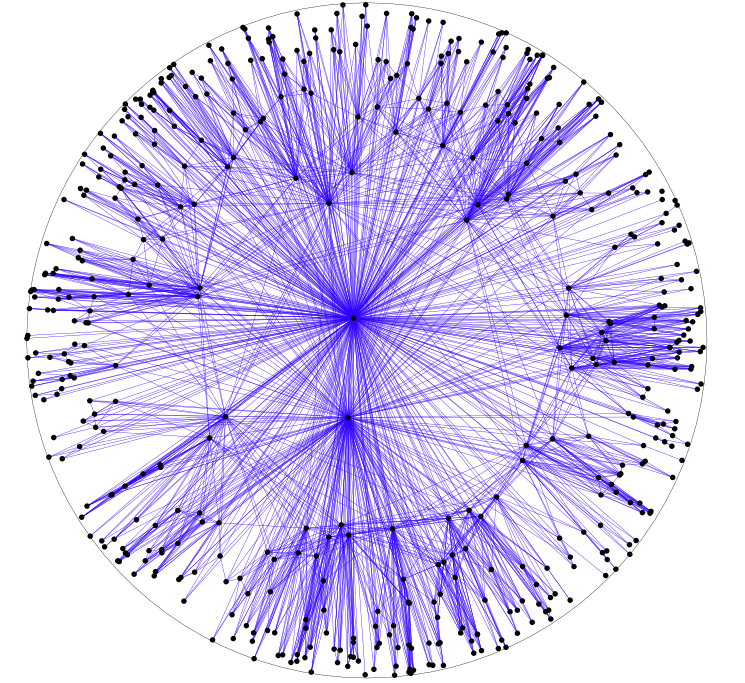}
\includegraphics[scale = 0.27]{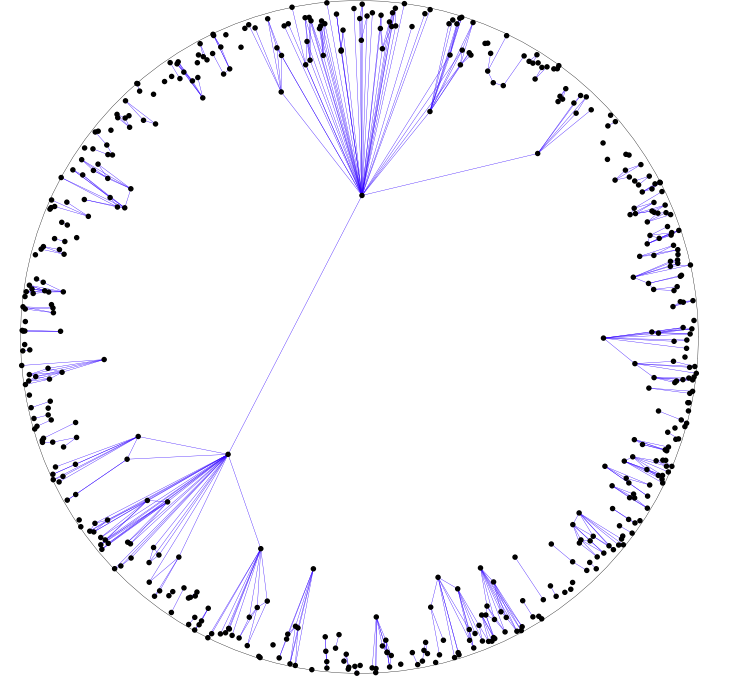}
\caption{The random graph $G(N;\alpha, \nu)$ with $N=500$ vertices, $\nu =2$ and $\alpha = 0.7$ and $3/2$. } 
\label{fig:KPKVB}
\end{figure}

\begin{figure}[h]
\centering
\includegraphics[scale = 0.27]{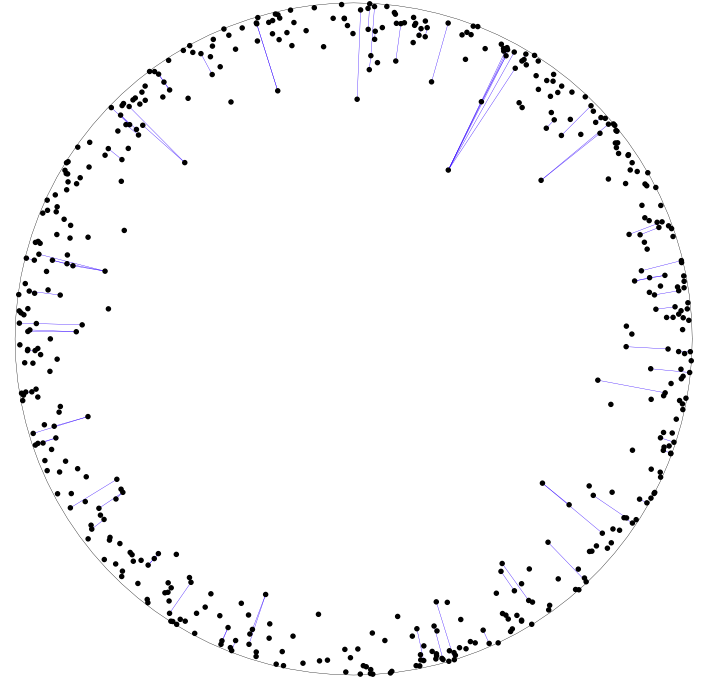}
\includegraphics[scale = 0.27]{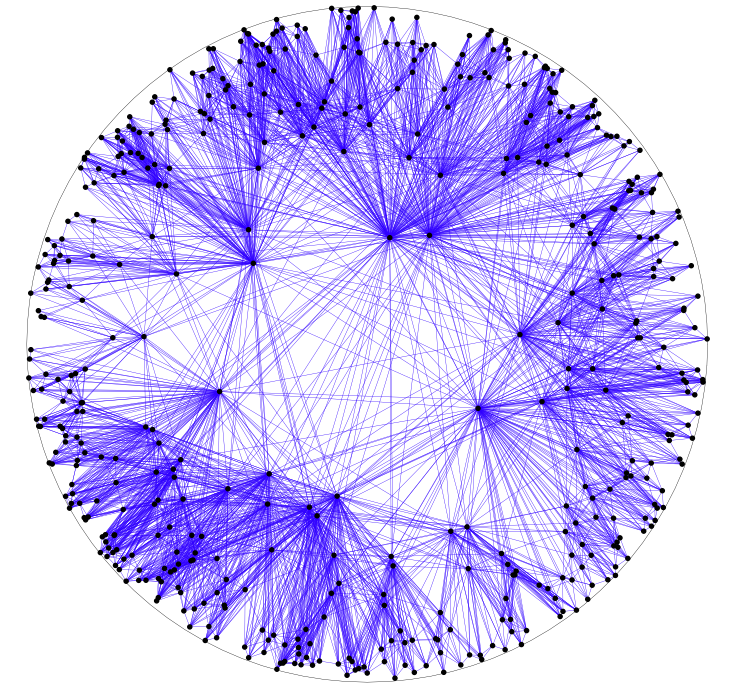}
\caption{The random graph $G(N;\alpha, \nu)$ with $N=500$ vertices, $\alpha =1$ and $\nu = 0.2$ and $10$.
 } \label{fig:KPKVB_1}
\end{figure}


\noindent
Krioukov et al.~\cite{ar:Krioukov} in fact defined a generalisation of this model where the probability distribution over the 
graphs on a given point set $\mathrm{V}$ of size $N$ is a Gibbs distribution which involves a temperature parameter. 
The model we consider in this paper corresponds to the limiting case where the temperature tends to 0.

Let us remark that the {\em edge-set of $G(N;\alpha,\nu)$ is decreasing in $\alpha$ and increasing in $\nu$} in the following sense.
We remind the reader that a {\em coupling} of two random objects $X,Y$ is a common probability space for a pair of objects $(X',Y')$ 
whose marginal distributions satisfy $X'\isd X, Y'\isd Y$.

\begin{lemma}[\cite{BFMgiantEJC}]
Let $\alpha, \alpha', \nu, \nu' > 0$ be such that $\alpha \geq \alpha'$ and $\nu \leq \nu'$.
For every $N \in \eN$, there exists a coupling such that $G(N;\alpha, \nu)$ is a subgraph of $G(N;\alpha',\nu')$.
\end{lemma}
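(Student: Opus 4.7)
The plan is to decompose the inequality into two monotonicity steps and construct an explicit coupling in each. First I couple $G(N;\alpha,\nu)$ with $G(N;\alpha',\nu)$ (so $R := 2\log(N/\nu)$ stays fixed while $\alpha$ drops to $\alpha'$), then $G(N;\alpha',\nu)$ with $G(N;\alpha',\nu')$ (so $\alpha$ is fixed and $R$ decreases to $R' := 2\log(N/\nu')$). In each step I use the natural \emph{quantile coupling}: draw i.i.d.\ uniforms $U_1,\ldots,U_N$ on $[0,1]$ and $\Theta_1,\ldots,\Theta_N$ on $[0,2\pi)$, and set the radii of the two graphs by $r_i = F_{\alpha,R}^{-1}(U_i)$ and $\tilde r_i = F_{\alpha^\dagger,R^\dagger}^{-1}(U_i)$ respectively (with the relevant parameters), keeping the angles $\Theta_i$ common.

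For the $\alpha$-step (with $R$ fixed) it suffices to check that $F_{\alpha,R}(r)$ is nonincreasing in $\alpha$ on $[0,R]$: the densities form a monotone likelihood-ratio family, since $\partial_r \log f_{\alpha,R}(r) = \alpha\coth(\alpha r)$ is nondecreasing in $\alpha$ for each $r>0$. Consequently $\tilde r_i \leq r_i$ under the coupling. By the hyperbolic cosine rule, $\{i,j\}$ is an edge iff $\cos(\Theta_i-\Theta_j) \geq h(r_i,r_j;R)$, where
\begin{equation*}
 h(r_i,r_j;R) \;:=\; \frac{\cosh r_i \cosh r_j - \cosh R}{\sinh r_i \sinh r_j},
\end{equation*}
and a direct derivative calculation gives $\partial h/\partial r_i = (\cosh R \cosh r_i - \cosh r_j)/(\sinh^2 r_i \sinh r_j) \geq 0$ for $r_i,r_j \in (0,R]$, since $\cosh R \cosh r_i \geq \cosh R \geq \cosh r_j$. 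Hence $h(\tilde r_i,\tilde r_j;R) \leq h(r_i,r_j;R)$, so every edge of the first graph survives in the second.

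For the $\nu$-step (with $\alpha$ fixed, $R \geq R'$), the quantile coupling preserves the invariant $t_i = F_{\alpha,R}(r_i) = F_{\alpha,R'}(\tilde r_i) \in [0,1]$, and the task is to show $h(r_i,r_j;R) \geq h(\tilde r_i,\tilde r_j;R')$. In the special case $\alpha = 1$ one has $\cosh r_i = 1 + t_i(C-1)$ with $C := \cosh R$; substituting and simplifying gives
\begin{equation*}
 h \;=\; \frac{t_it_j(C-1) + t_i + t_j - 1}{\sqrt{t_it_j\bigl(t_i(C-1)+2\bigr)\bigl(t_j(C-1)+2\bigr)}},
\end{equation*}
and checking $\partial h/\partial C \geq 0$ at fixed $t_i,t_j$ reduces, after clearing denominators, to $t_it_j(C+1) + t_i + t_j \geq t_i^2 + t_j^2$, which is immediate because $t_i^2 \leq t_i$ and $t_j^2 \leq t_j$. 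The main obstacle is the analogous monotonicity check for general $\alpha$: here $\cosh r_i = \cosh\bigl((1/\alpha)\operatorname{arccosh}(1 + t_i(\cosh(\alpha R)-1))\bigr)$ is not a polynomial in the invariant $t_i$, so one cannot simply multiply out as above, but the same blueprint of verifying $\partial h/\partial R \geq 0$ at fixed $t_i,t_j,\alpha$ goes through by differentiating implicitly and invoking standard hyperbolic trigonometric inequalities. Combining the two couplings then yields the stated subgraph relation.
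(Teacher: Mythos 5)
Your decomposition into an $\alpha$-step (fix $R$, lower $\alpha$) and a $\nu$-step (fix $\alpha$, lower $R$), each realised by the radial quantile coupling, is a reasonable plan. The $\alpha$-step is correct and complete: $\alpha\coth(\alpha r)$ is indeed nondecreasing in $\alpha$ (equivalently $\sinh(2x)\geq 2x$), so the family is MLR in $\alpha$, hence $\tilde r_i\leq r_i$, and the sign computation $\partial h/\partial r_i = (\cosh R\cosh r_i-\cosh r_j)/(\sinh^2 r_i\sinh r_j)\geq 0$ then transfers every edge. (Note the paper does not give its own proof of this lemma; it cites \cite{BFMgiantEJC}, so there is nothing in-paper to compare your route against.)

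The $\nu$-step, however, is not actually proved. You verify $\partial h/\partial C\geq 0$ at fixed $(t_i,t_j)$ only for $\alpha=1$, where $\cosh r_i=1+t_i(C-1)$ is \emph{affine} in the quantile $t_i$ and the algebra closes via the exact cancellation $D^2-P^2=-(t_i-t_j)^2$, leaving $P-(t_i-t_j)^2=t_it_j(C+1)+t_i+t_j-t_i^2-t_j^2\geq0$. For general $\alpha$ the quantile invariant gives $\cosh r_i=\cosh\bigl(\alpha^{-1}\operatorname{arccosh}(1+t_i(\cosh(\alpha R)-1))\bigr)$, which is transcendental in $(t_i,R)$, and the quantity whose sign you need,
\[
\frac{d}{dR}\,h\Big|_{t_i,t_j}
=\frac{\partial h}{\partial R}
+\frac{\partial h}{\partial r_i}\,\frac{\partial r_i}{\partial R}
+\frac{\partial h}{\partial r_j}\,\frac{\partial r_j}{\partial R},
\]
is a genuine competition between the strictly negative term $\partial h/\partial R=-\sinh R/(\sinh r_i\sinh r_j)$ and the two nonnegative ones. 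Your sentence that ``the same blueprint \ldots goes through by differentiating implicitly and invoking standard hyperbolic trigonometric inequalities'' is an assertion, not an argument: the cancellation that rescued the $\alpha=1$ case has no evident analogue once $\cosh r_i$ is not polynomial in $t_i$, and you do not exhibit which inequality replaces it. Until that step is carried out explicitly (or the $\nu$-step is handled by a different coupling that avoids the computation), the lemma is established only for $\alpha=1$, so the proof has a real gap.
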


\noindent
The proof can be found in our earlier paper~\cite{BFMgiantEJC} that was joint with Bode.

We should also mention that Krioukov et al.~in fact had an additional parameter in their definition of the model. 
However, it turns out that this parameter is not necessary in the following sense.
Every probability distribution (on labelled graphs) that is defined by some choice of parameters in the model with one extra parameter coincides with the probability distribution $G(N;\alpha,\nu)$ for some $N, \alpha, \nu$.
This is Lemma 1.1 in~\cite{BFMgiantEJC}.

Krioukov et al.~\cite{ar:Krioukov} focus on the degree distribution of $G (N; \alpha, \nu)$, showing that when 
$\alpha > {1\over 2}$ this follows a power law with exponent $2\alpha  + 1$. They also discuss clustering on the generalised version we briefly described above. Their results on the degree distribution have been verified rigorously by 
Gugelmann et al.~\cite{ar:Gugel}. 

Note that when $\alpha = 1$, that is, when the $N$ vertices are uniformly distributed in $\Dcal_R$, the exponent of the 
power law is equal to 3. When ${1\over 2} <\alpha < 1$, the exponent is between 2 and 3, as is the case in a number of networks 
that emerge in applications such as computer networks, social networks and biological networks (see for example~\cite{BarAlb}). 
When $\alpha = {1\over 2}$ then the exponent becomes equal to 2. This case has recently emerged in theoretical cosmology~\cite{ar:Krioukov2}. In a quantum-gravitational setting, networks between space-time events are considered 
where two events are connected (in a graph-theoretic sense) if they are causally connected, that is, one is located in the light cone of the other. The analysis of Krioukov et al.~\cite{ar:Krioukov2} indicates that the tail of the degree distribution follows a power law with exponent 2.    

As observed by Krioukov et al.~\cite{ar:Krioukov} and rigorously proved by Gugelmann et al.~\cite{ar:Gugel}, the average degree of the random graph can be ``tuned" through the parameter $\nu$ : 
for $\alpha > \frac12$, the average degree tends to  $2\alpha^2 \nu / \pi(\alpha-\frac12)^2$ in probability. 

In~\cite{BFMgiantEJC}, together with M.~Bode we have showed that $\alpha = 1$ is the critical point for the emergence of a 
``giant'' component in $G (N;\alpha, \nu)$. More specifically, we have showed that when $\alpha < 1$, the largest component 
contains a positive fraction of the vertices asymptotically with high probability, whereas if $\alpha > 1$, then the largest 
component is sublinear. For $\alpha = 1$, then the component structure depends on $\nu$. We show that if $\nu$ is large enough, 
then a ``giant'' component exists with high probability, whereas if $\nu$ is small enough, then with high probability all
components have sublinear size.   

Kiwi and Mitsche~\cite{KiwiMit} showed that when $\alpha < 1$ then the second largest component is sublinear 
with high probability. 

In~\cite{BFMconnRSA}, together with M.~Bode we studied the probability of being connected.
We showed that when $\alpha < \frac12$, then $G(N;\alpha,\nu)$ is connected with high probability, while if
$\alpha > \frac12$ the graph is disconnected with high probability. 
For $\alpha = \frac12$, the probability of connectivity tends to a constant $c=c(\nu)$ that depends on $\nu$.
Curiously, $c(\nu)$ equals one for $\nu \geq \pi$ while it increases strictly from zero to one as $\nu$ varies from $0$ to $\pi$.

\subsection{Results}
The results of this paper elaborate on the component structure of the KPKVB model, refining our previous results~\cite{BFMgiantEJC} with Bode. 
We show that the size of the largest component rescaled by $N$ (that is, the fraction 
of the vertices that belong to the largest component) converges in probability to a non-negative constant. We further determine this 
constant as the integral of a function that is associated with a percolation model on $\mathbb{R}^2$.
We also show that the number of points in the second largest connected component is sublinear. 
When $\alpha = 1$, we show that there exists a critical $\nu$ around which the constant changes from 0 to being positive. 
In other words, there is a critical value for $\nu$ around which the emergence of the giant component occurs. 
We let $\Cscr_{(1)}$ and $\Cscr_{(2)}$ denote the largest and second largest connected component (if the two have the 
same number of vertices, then the ordering is lexicographic using a labelling of the vertex set). 

\begin{theorem}\label{thm:main}
For every $\nu, \alpha > 0$ there exists a $c=c(\alpha,\nu)$ such that
\[ 
 \frac{|\Cscr_{(1)}|}{N} \to c \quad \quad \text{ and } \quad \quad \quad \frac{|\Cscr_{(2)}|}{N} 
 \to 0 \quad \quad \text{ in probability, }
\]
as $N\to\infty$. The function $c(\alpha,\nu)$ has the following properties:
\begin{enumerate}
 \item\label{itm:main.i} If $\alpha > 1$ and $\nu$ arbitrary then $c=0$;
 \item\label{itm:main.ii} If $\alpha < 1$ and $\nu$ arbitrary then $c > 0$;
 \item\label{itm:main.iii}There exists a \emph{critical value} $0 < \nuc < \infty$ such that if $\alpha=1$ then
 \begin{enumerate}
  \item if $\nu < \nuc$ then $c=0$;
  \item if $\nu > \nuc$ then $c>0$.
 \end{enumerate}
 \item\label{itm:main.iv} $c=1$ if and only if $\alpha \leq \frac12$.
 \item\label{itm:main.v} $c$ is continuous on $(0,\infty)^2 \setminus \{1\}\times [\nuc,\infty)$, and every point of
 $\{1\}\times (\nuc,\infty)$ is a point of discontinuity.
 \item\label{itm:main.vi} $c$ is strictly decreasing in $\alpha$ on $(\frac12,1)\times(0,\infty)$ and strictly increasing 
 in $\nu$ on $(\frac12,1)\times(0,\infty) \cup \{1\}\times(\nuc,\infty)$.
\end{enumerate}
\end{theorem}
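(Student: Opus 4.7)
The strategy is to approximate $G(N;\alpha,\nu)$ locally by an infinite continuum percolation model and identify $c(\alpha,\nu)$ with its percolation probability. I map each point $(r,\theta)\in\Dcal_R$ to $(x,y) := ((N/2\pi)\theta,\, R-r)$. In these coordinates the $N$ random points form a point set on $[0,N)\times[0,R]$ whose marginal $y$-density is asymptotic to $\alpha e^{-\alpha y}$, and a short computation from the hyperbolic cosine rule shows that $u\sim v$ in $G(N;\alpha,\nu)$ is asymptotically equivalent to $|x_u-x_v|\leq \Phi(y_u,y_v)(1+o(1))$ for an explicit function $\Phi$ depending on $\nu$. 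Letting $N\to\infty$ produces a Poisson point process $\Pscr_{\alpha,\nu}$ on $\Ree\times[0,\infty)$ of intensity $\alpha e^{-\alpha y}\,\dd x\,\dd y$ with edges governed by the same local rule. I define
\[
c(\alpha,\nu) \;:=\; \Pee_{\alpha,\nu}\bigl(\,\text{the Palm point at the origin belongs to an infinite component of }\Pscr_{\alpha,\nu}\,\bigr).
\]

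The convergence $|\Cscr_{(1)}|/N\to c$ (and $|\Cscr_{(2)}|/N\to 0$) is then proved by a two-sided estimate. For the lower bound I tile the rescaled disc into angular windows of length $L = L(N)\to\infty$ with $L/N\to 0$, and in each window couple the induced subgraph of $G(N;\alpha,\nu)$ to the restriction of $\Pscr_{\alpha,\nu}$ up to a negligible error. A Palm/ergodic argument on $\Pscr_{\alpha,\nu}$ gives that a fraction $c+o(1)$ of the points in each window belong to a cluster of $\Pscr_{\alpha,\nu}$-diameter at least $L/4$; these ``good'' local clusters must be pairwise connected in $G(N;\alpha,\nu)$, because long-range edges produced by the low-$y$ (equivalently, high-$r$) region deterministically bridge neighbouring windows. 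A sprinkling step then concatenates all good clusters into a single component, yielding the lower bound. The complementary bound $|\Cscr_{(2)}|/N\to 0$ reduces to uniqueness of the infinite cluster in $\Pscr_{\alpha,\nu}$, which I would prove by a Burton--Keane trifurcation argument adapted to this horizontally translation-invariant, vertically inhomogeneous model.

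The remaining properties follow from analogous statements about $\Pscr_{\alpha,\nu}$. Assertions (i) and (ii) are, in effect, the main results of \cite{BFMgiantEJC} and transfer immediately. For (iii), when $\alpha=1$ the model enjoys a scaling symmetry; Galton--Watson domination gives $c(1,\nu)=0$ for small $\nu$, a Peierls-type contour argument gives $c(1,\nu)>0$ for large $\nu$, and monotonicity in $\nu$ (from the coupling lemma) yields a single critical value $\nuc\in(0,\infty)$. Statement (iv) invokes the connectivity result of \cite{BFMconnRSA}: for $\alpha\leq\tfrac12$ the graph is connected with probability tending to one, so $c=1$, while for $\alpha>\tfrac12$ the expected number of isolated vertices is linear in $N$, hence $c<1$. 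Continuity (v) away from the critical segment follows from standard continuity of percolation functions in the intensity and range parameters; the jump along $\{1\}\times(\nuc,\infty)$ is forced because $c(\alpha,\nu)\to 0$ as $\alpha\downarrow 1$ (by (i) combined with a uniform tightness estimate on cluster sizes) while $c(1,\nu)>0$ for $\nu>\nuc$. Strict monotonicity in (vi) comes from a Russo-type enhancement argument applied to $\Pscr_{\alpha,\nu}$.

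The crux of the argument is the uniformity of the local coupling. Because the $y$-coordinate has an unbounded exponential tail and the connection range $\Phi$ grows rapidly in $y$, couplings that are accurate on compact sets do not automatically glue into a coupling that is accurate enough to survive a union bound over $\Theta(N/L)$ tiles; controlling the error requires careful truncation in $y$ and exploitation of the rapid decay of the expected number of points at large $y$. The parallel technical challenge is the Burton--Keane argument in the inhomogeneous setting, together with the continuity/discontinuity dichotomy of $c$ along the critical line $\{\alpha=1\}$, which encodes the phase transition of the limiting continuum model and appears to be where the most delicate work will lie.
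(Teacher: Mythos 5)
Your overall plan matches the paper's: map $(r,\vartheta)\mapsto(cN\vartheta,R-r)$, pass to a Poisson process on $\eR\times[0,\infty)$ with intensity proportional to $e^{-\alpha y}$ and a connection rule of the form $|x-x'|<e^{(y+y')/2}$, identify $c(\alpha,\nu)$ with $\int_0^\infty\theta(y;\alpha,\nu\alpha/\pi)\,\alpha e^{-\alpha y}\,{\dd}y$, and transfer via a local coupling and second-moment bookkeeping. That high-level route is exactly right. However, several of the sub-arguments you sketch either differ in a way worth noting or contain genuine slips.

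First, your claim that $c(1,\nu)=0$ for small $\nu$ follows by ``Galton--Watson domination'' is not justified, and appears to be the most serious gap. In this model the expected number of neighbours of a point at height $y$ is of order $e^{y/2}$, which is unbounded; a naive cluster-exploration tree does not have a dominated offspring distribution, so a Galton--Watson subcriticality argument does not get off the ground. The paper instead tracks a sequence of ``records'' $(x_i,y_i)$ in the exploration, shows the increments $y_{i+1}-y_i$ are stochastically dominated by a Gumbel with negative mean when $\lambda$ is small, and uses the law of large numbers for the resulting random walk to force termination. You would need a comparable device.

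Second, you have the geometry reversed when you write that ``long-range edges produced by the low-$y$ (equivalently, high-$r$) region deterministically bridge neighbouring windows.'' Low $y=R-r$ corresponds to $r$ near $R$, i.e., points near the boundary of $\Dcal_R$, and these have short connection range $e^{y/2}$. The bridging vertices are those with \emph{large} $y$ (small $r$). The paper's crossing event $C_{w,h}$ is built from points near the top of a truncation strip, and its proof of Lemma \texttt{lem:Ch} for $\alpha<1$ uses boxes at heights in $[h-1,h]$ and for $\alpha=1$ uses that the infinite cluster reaches arbitrarily high. Also note that the paper does not take a union bound over $\Theta(N/L)$ tiles, which is the failure mode you worry about; it instead computes first and second moments of a single global count of ``good'' points via the Mecke formula, so the coupling only has to be accurate on a single annulus.

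Third, a Burton--Keane adaptation for uniqueness is plausible but is a heavier hammer than needed: the model is translation-invariant only in the $x$-direction, and the density and connection range vary with $y$, so the standard trifurcation bookkeeping would need reworking. The paper instead proves uniqueness by a short deterministic planarity-style argument (crossing lemma for the edges of $\Gamma$, combined with a ladder of boxes $R_{i,\pm 1},R_{i,\pm 2}$ high above any two fixed boxes). Similarly, for strict monotonicity in (vi) the paper does not use Russo/enhancement but a direct coupling: superpose an independent thin Poisson sprinkling and show that with positive probability the sprinkled points rescue a finite cluster of the origin into the infinite one.

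Finally, calling (v) ``standard'' underestimates the one genuinely delicate case: continuity as $\alpha\downarrow 1$ at parameters $(1,\lambda)$ with $\Pee_{1,\lambda}(\text{percolation})=0$. This requires an explicit finite-volume approximation of non-percolation (the event $U(y;n,h)$ in the paper), a tail bound showing that edges from $[-n,n]\times[0,n]$ out of the box $[-e^h,e^h]\times[0,h]$ are negligible uniformly in $\alpha$ near $1$, and then Lemma \texttt{lem:VerwBegrCont} for continuity of bounded-region events. The Van den Berg--Keane argument gives the complementary direction inside the supercritical regime. These are not pure boilerplate and should be spelled out.
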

The above theorem leaves open the case where $\alpha =1$ and $\nu = \nuc$. We do not know whether $c(1,\nuc)$ is positive or 0. 
The latter would imply that $c(1,\nu)$ is continuous as a function of $\nu$. 
We however conjecture that $c(1,\nuc) > 0$.

\subsubsection*{Notation} For a sequence of real-valued 
random variables $X_n$ defined on a sequence of probability spaces, 
we write $X_n \stackrel{p}{\to} a$, where $a\in \mathbb{R}$, to denote that 
the sequence $X_n$ converges to $a$ \emph{in probability}. For a sequence of positive real numbers $a_n$, we write 
$X_n = o_{p}(a_n)$, if $|X_n|/a_n \stackrel{p}{\to} 0$, as $n \to \infty$.  

An event occurs \emph{almost surely (a.s.)} if its probability is equal to 1. 
For a sequence of probability spaces $(\Omega_n, \mathbb{P}_n, \mathcal{F}_n)$, we say that a sequence of measurable sets
$\mathcal{E}_n \in \mathcal{F}_n$ occurs \emph{asymptotically almost surely (a.a.s.)}, if $\lim_{n \to \infty} 
\mathbb{P}_n(\mathcal{E}_n)=1$. 

\subsubsection{Tools and outline}

Roughly speaking, one can show that two vertices of radii $r_1$ and $r_2$, respectively, which are close to the periphery of $\Dcal_R$ are
adjacent if their relative angle is at most $e^{(y_1 + y_2)/2}/N$, where $y_1 = R - r_1$ and $y_2 = R - r_2$. Hence, conditional on $r_1$
and $r_2$, the probability that these two vertices are adjacent is proportional to $e^{(y_1 + y_2)/2}/N$. 

We will map $\Dcal_R$ onto $\mathbb{R}^2$ preserving $y$ and projecting the angle onto the $x$-axis scaling it by $N$. 
Hence, relative angle $e^{(y_1 + y_2)/2}/N$ between two vertices in $\Dcal_R$ will project to two points in $\mathbb{R}^2$ whose 
$x$-coordinates differ by at most $e^{(y_1 + y_2)/2}$. 

This gives rise to a continuum percolation model on $\mathbb{R}^2$, whose vertices are the points of an
inhomogeneous Poisson point process and any two of them are joined if their positions satisfy the above condition. 
This continuum percolation model is a good approximation of $G(N;\alpha, \nu)$ close to the periphery of $\Dcal_R$. 
As we shall see later in Section~\ref{sec:transfer}, we can couple the two using 
the aforementioned mapping so that the two graphs coincide close to the $x$-axis.

In Section~\ref{sec:cont_perc}, we determine the 
critical conditions for the parameters of the continuum percolation model that ensure the almost sure existence of an infinite component. 
Thereafter, in Section~\ref{sec:transfer}, we show how does this infinite model approximate $G(N;\alpha, \nu)$ 
and deduce the law of large numbers for the size of its largest and the second largest connected component. 
The fraction of vertices that are contained in the largest component converges in probability to a constant $c(\alpha, \nu)$. 
More specifically, we show that the function $c(\alpha,\nu)$ is given as the integral of the probability that a given point 
``percolates", that is, it belongs to an infinite component, in the infinite model.

\section{A continuum percolation model}\label{sec:cont_perc}

Given a countable set of points $P = \{p_1,p_2, \dots \} \subset \eR \times [0,\infty)$ in the upper
half of the (euclidean) plane, we define a graph $\Gamma(P)$ with vertex set $P$ by setting

\[
p_ip_j \in E(\Gamma (P)) \text{ if and only if } |x_i-x_j| < e^{\frac12(y_i+y_j)}=:t(y_i,y_j),
\]
\noindent
where we write $p_i = (x_i, y_i)$.
For a point $p= (x,y)$, we let $\Bcal (p)$ denote the \emph{ball} around $p$, that is, the set 
$\{ p'=(x',y') \ : \ y' > 0, \ |x-x'| < t(y,y')  \}$. Thus, $\Bcal (p)$ contains all those points that would potentially 
connect to $p$. 
During our proofs, we will need the notion of the \emph{half-ball} around 
$p$. More specifically, we denote by $\Bcal^+ (p)$ the set $\{ p'=(x',y') \ : \ y' > 0, \  0 <x' - x < t(y,y')  \}$ and 
by $\Bcal^- (p)$ the set $\{ p'=(x',y') \ : \ y' > 0, \ 0< x-x' < t(y,y')  \}$. In other words, the $\Bcal^+(p)$ consists 
of those points in $\Bcal (p)$ that are located on the right of $p$ (i.e., they have larger $x$-coordinate) and 
$\Bcal^- (p)$ consists of those points that are on the left of $p$. 
Finally, for a point $p \in \mathbb{R}^2$, we let $x(p)$ and $y(p)$ be its $x-$ and $y-$coordinates, respectively.
We first make a few easy geometric observations that we will rely on in the sequel.

\begin{lemma}\label{lem:cross}
The following hold for the graph $\Gamma(P)$ defined above.
\begin{enumerate}
 \item\label{itm:cross.i} If $p_ip_j \in E(\Gamma(P))$ and $p_k$ is {\em above} the line segment
$[p_i, p_j]$ (i.e.~$[p_i,p_j]$ intersects the vertical line though $p_k$ below $p_k$), 
then at least one of the edges $p_kp_i, p_kp_j$ is also present in $\Gamma(P)$;
 \item\label{itm:cross.ii} If $p_ip_j, p_kp_\ell \in E(\Gamma(P))$ and the line segments $[p_i, p_j], [p_k,p_\ell]$ cross, then
 at least one of the edges $p_ip_k, p_ip_\ell, p_jp_k, p_jp_\ell$ is also present in $\Gamma(P)$.
 \end{enumerate}
\end{lemma}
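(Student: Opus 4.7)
The plan is to prove both parts by contradiction, using only the multiplicative factorisation $t(y_1,y_2)=e^{y_1/2}\cdot e^{y_2/2}$ of the distance threshold together with the weighted AM-GM inequality. The algebraic core common to both parts is the following observation: whenever $x_a\le x_c\le x_b$, the edge $p_ap_b$ is present, and neither $p_cp_a$ nor $p_cp_b$ is present, then adding the two failed-edge inequalities, using $(x_c-x_a)+(x_b-x_c)=x_b-x_a<e^{(y_a+y_b)/2}$, and dividing by $e^{(y_a+y_b)/2}$ yields
\[
e^{-y_c/2} \;>\; e^{-y_a/2}+e^{-y_b/2}.
\]

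For (i), relabel so that $x_i\le x_j$ and set $\lambda:=(x_k-x_i)/(x_j-x_i)\in[0,1]$; the hypothesis that $p_k$ lies above $[p_i,p_j]$ means $y_k\ge(1-\lambda)y_i+\lambda y_j$. Supposing for contradiction that both $p_kp_i$ and $p_kp_j$ are absent, the observation above gives $e^{-y_k/2}>e^{-y_i/2}+e^{-y_j/2}$. But weighted AM-GM applied to $(e^{-y_i/2},e^{-y_j/2})$ with weights $(1-\lambda,\lambda)$ gives
\[
e^{-y_k/2}\;\le\;e^{-((1-\lambda)y_i+\lambda y_j)/2}\;\le\;(1-\lambda)e^{-y_i/2}+\lambda e^{-y_j/2}\;\le\;e^{-y_i/2}+e^{-y_j/2},
\]
the required contradiction.

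For (ii), assume WLOG $x_i\le x_j$ and $x_k\le x_\ell$. The crossing hypothesis forces $[x_i,x_j]$ and $[x_k,x_\ell]$ to overlap, and after exchanging the two segments if necessary I reduce to two sub-cases. \emph{Nested} ($[x_k,x_\ell]\subseteq[x_i,x_j]$): since the segments cross, $p_k$ and $p_\ell$ lie on strictly opposite sides of the line through $p_i,p_j$; as their verticals both meet $[p_i,p_j]$, one of them, say $p_\ell$, lies above $[p_i,p_j]$ in the sense of (i), and applying (i) produces one of $p_\ell p_i,\,p_\ell p_j$. \emph{Staggered} ($x_i\le x_k\le x_j\le x_\ell$): assuming all four cross-edges are absent, the observation applied to the triple $(p_i,p_j;p_k)$ gives $e^{-y_k/2}>e^{-y_i/2}+e^{-y_j/2}$, while applied to $(p_k,p_\ell;p_j)$ it gives $e^{-y_j/2}>e^{-y_k/2}+e^{-y_\ell/2}$; concatenating these yields $e^{-y_j/2}>e^{-y_j/2}$, the required contradiction.

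I expect the only awkward part to be the case enumeration in (ii): verifying that after the two WLOG choices, the symmetry of swapping the roles of the two segments really reduces every overlap pattern to the two sub-cases above, and that boundary configurations with coincident $x$-coordinates (where the $x$-orderings become non-strict) create no loopholes. Such degeneracies are easily absorbed since the edge condition $|x_a-x_b|<t(y_a,y_b)$ is strict whereas the negations are weak inequalities. The rest of the argument is purely the AM-GM/summation computation above, which should go through without complications.
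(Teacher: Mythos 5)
Your proposal is correct, and it takes a genuinely different route from the paper's. The paper's proof proceeds by a direct case analysis that exploits monotonicity of the threshold $e^{(y+y')/2}$ to pin down \emph{which} of the candidate edges is present: in part (i), after relabelling so that $x_i\leq x_k\leq x_j$ and $y_i\leq y_j$, it chains $|x_j-x_k|\leq|x_j-x_i|<e^{(y_i+y_j)/2}\leq e^{(y_k+y_j)/2}$ (using $y_k\geq y_i$) to conclude that $p_kp_j$ is an edge; in the staggered case of (ii) it splits on whether $y_k\geq y_j$ or $y_k<y_j$ and again produces a specific edge directly. Your proof instead packages both negated edge-conditions into the single inequality $e^{-y_c/2}>e^{-y_a/2}+e^{-y_b/2}$ and then derives a contradiction: from weighted AM--GM in part (i), and from concatenating two instances of the inequality in the staggered case of (ii). Both arguments handle the nested case of (ii) identically, by reducing to part (i). The trade-off is roughly this: the paper's argument is slightly more concrete (it tells you exactly which edge exists, and is about as short), while yours is more uniform---one algebraic lemma drives everything---and in the staggered case of (ii) it makes visible that the geometric crossing hypothesis is not actually needed there, only the interleaving $x_i\leq x_k\leq x_j\leq x_\ell$ of the projections. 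Two cosmetic slips in your write-up that do not affect correctness: the division should be by $e^{(y_a+y_b+y_c)/2}$ (or, equivalently, by $e^{(y_a+y_b)/2}$ followed by $e^{y_c/2}$), and the degenerate configurations with equal $x$-coordinates, which you flag, are indeed harmless here---and in the paper's setting are ruled out almost surely since the points come from a Poisson process.
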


\begin{proofof}{part~\ref{itm:cross.i}}
By symmetry, we can assume that $x_i \leq x_k \leq x_j$ and that $y_i \leq y_j$.
That $p_kp_i \in E(\Gamma(P))$ now follows easily from $|x_i-x_k| \leq |x_j-x_i| \leq e^{\frac12(y_i+y_j)} \leq e^{\frac12(y_k+y_j)}$.

\noindent
{\bf Proof of part~\ref{itm:cross.ii}}
Of course the projections of the two intervals onto the $x$-axis also intersect.
We can assume without loss of generality that $x_k < x_\ell, x_i < x_j$.
(If one of the two segments is vertical then we are done by the previous part.)
If $[x_i, x_j] \subseteq [x_k, x_\ell]$ then either $p_i$ or $p_j$ is above
$[p_k, p_\ell]$ (since the segments cross) and we are done by the previous lemma.
Similarly, we are done if $[x_k, x_\ell] \subseteq [x_i, x_j]$.
Up to symmetry, the remaining case is when $x_i < x_k \leq x_j < x_\ell$.

Suppose now that $y_k \geq y_j$. Then $|x_i-x_k| \leq |x_i-x_j| \leq e^{\frac12(y_i+y_j)} \leq e^{\frac12(y_i+y_k)}$.
In other words, $p_kp_i \in E(\Gamma(P))$. 
In the case when $y_k < y_j$ we find similarly that $p_jp_\ell \in E(\Gamma(P))$.
\end{proofof}

We now consider a Poisson process on the upper half of the (Euclidean) plane $\eR\times[0,\infty)$, with intensity function

\begin{equation}\label{eq:fdef} 
f_{\alpha,\lambda}(x,y) := \lambda \cdot e^{-\alpha y}. 
\end{equation}

\noindent
Here $\alpha, \lambda > 0$ are parameters. 
Let us denote the points of this Poisson process by $\Pcal_{\alpha,\lambda} := \{p_1, p_2, \dots \}$, and let us write
$p_i := (x_i, y_i)$ for all $i$.
We will be interested in the random countable graph $\Gamma_{\alpha,\lambda} := \Gamma(\Pcal_{\alpha,\lambda})$ with 
$\Gamma(.)$ as defined above.

In what follows, we will make frequent use of the following two facts on $\Pcal_{\alpha,\lambda}$.
The following result is a direct consequence of the superposition theorem for Poisson processes 
(which can for instance be found in~\cite{KingmanBoek}). 

\begin{lemma} \label{lem:superpos}
When $\alpha \geq \alpha'$ and $\lambda \leq \lambda'$, then $\Pcal_{\alpha',\lambda'}$ is distributed like
the union of $\Pcal_{\alpha,\lambda}$ and an independent Poisson point process with intensity function
$f_{\alpha',\lambda'} - f_{\alpha,\lambda}$.
\end{lemma}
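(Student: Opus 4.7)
The plan is to verify that the hypotheses of the superposition theorem are met and then invoke it directly. The only thing to check is that the function $g := f_{\alpha',\lambda'} - f_{\alpha,\lambda}$ is a legitimate (i.e.~non-negative, locally integrable) intensity function on $\eR \times [0,\infty)$. Once this is established, the superposition theorem for Poisson point processes (see for example~\cite{KingmanBoek}) gives immediately that the union of two independent Poisson processes with intensities $f_{\alpha,\lambda}$ and $g$ is a Poisson process with intensity $f_{\alpha,\lambda} + g = f_{\alpha',\lambda'}$, which has the same distribution as $\Pcal_{\alpha',\lambda'}$.

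For the non-negativity, I would observe that for every $(x,y) \in \eR \times [0,\infty)$,
\[
g(x,y) \;=\; \lambda' e^{-\alpha' y} - \lambda e^{-\alpha y} \;\geq\; \lambda\bigl(e^{-\alpha' y} - e^{-\alpha y}\bigr) \;\geq\; 0,
\]
using in the first inequality that $\lambda \leq \lambda'$ and in the second that $y \geq 0$ together with $\alpha' \leq \alpha$, which makes $e^{-\alpha' y} \geq e^{-\alpha y}$. Local integrability is clear because $g$ is bounded by $f_{\alpha',\lambda'}$, which is already known to be the intensity of a well-defined Poisson process.

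There is essentially no obstacle here; the statement is a one-line consequence of superposition once monotonicity of the intensities in $\alpha$ and $\lambda$ is noted. The only mildly subtle point is that one needs both $\lambda \leq \lambda'$ and $\alpha \geq \alpha'$ working in the same direction on the whole half-plane $y \geq 0$; if $y$ were allowed to be negative, the sign of $e^{-\alpha' y} - e^{-\alpha y}$ would flip and the difference would fail to be an intensity. Since the process lives on $\eR \times [0,\infty)$ by construction, this is not an issue.
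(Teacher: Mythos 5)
Your proposal is correct and follows the same route the paper takes: the paper simply observes that the lemma is a direct consequence of the superposition theorem for Poisson processes, and you supply the (easy but necessary) verification that $g := f_{\alpha',\lambda'} - f_{\alpha,\lambda}$ is a valid intensity, which is exactly what that invocation requires. Your two-step inequality for non-negativity and the remark that everything lives on $y \geq 0$ are both sound.
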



We will also need to use the following related observation. 

\begin{lemma}\label{lem:couplingPoisson}
The exists a coupling of all the Poisson processes $\Pcal_{\alpha,\lambda}$ with $\alpha, \lambda > 0$ simultaneously such that 
(almost surely) $\Pcal_{\alpha,\lambda} \subseteq \Pcal_{\alpha',\lambda'}$ whenever $\alpha \geq \alpha'$ and $\lambda \leq \lambda'$.
\end{lemma}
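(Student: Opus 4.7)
The plan is to construct, on a single probability space, a \emph{master} Poisson process $\Pi$ living on the three-dimensional region $\eR \times [0,\infty) \times [0,\infty)$ with intensity equal to three-dimensional Lebesgue measure, and to realise each $\Pcal_{\alpha,\lambda}$ as the projection onto the first two coordinates of the points of $\Pi$ that lie beneath the graph of $f_{\alpha,\lambda}$. Since Lebesgue measure on $\eR \times [0,\infty) \times [0,\infty)$ is $\sigma$-finite and locally finite, such a $\Pi$ exists and is an a.s.\ locally finite simple point process.

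First, I would fix $\Pi$ as above and, for every $\alpha,\lambda>0$, define
\[
\Pcal_{\alpha,\lambda} \;:=\; \bigl\{\, (x,y) \,:\, (x,y,z)\in\Pi \text{ for some } 0\le z\le f_{\alpha,\lambda}(x,y)\,\bigr\}.
\]
Equivalently, $\Pcal_{\alpha,\lambda}$ is the projection onto the $(x,y)$-plane of $\Pi \cap R_{\alpha,\lambda}$, where $R_{\alpha,\lambda} := \{(x,y,z) : y\ge 0,\ 0\le z\le f_{\alpha,\lambda}(x,y)\}$.

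Next, I would verify that $\Pcal_{\alpha,\lambda}$ has the required distribution. For any measurable $A\subseteq \eR\times[0,\infty)$, Fubini gives that the three-dimensional Lebesgue measure of $R_{\alpha,\lambda}\cap (A\times[0,\infty))$ equals $\int_A f_{\alpha,\lambda}(x,y)\,\dd x\,\dd y$. The standard marking/thinning theorem for Poisson processes (see Kingman~\cite{KingmanBoek}) then shows that $\Pi\cap R_{\alpha,\lambda}$ is a Poisson process on $R_{\alpha,\lambda}$ with intensity Lebesgue, and hence its projection $\Pcal_{\alpha,\lambda}$ is a Poisson process on $\eR\times[0,\infty)$ with intensity $f_{\alpha,\lambda}$, as required. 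A small technicality to dispose of is that the projection must a.s.\ be injective on the relevant points, so that $\Pcal_{\alpha,\lambda}$ really is a simple point process; but two distinct points of $\Pi$ almost surely have distinct $(x,y)$-coordinates, because the pairwise probability of coincidence is zero under Lebesgue intensity, and there are only countably many pairs to consider in any bounded region.

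Finally, monotonicity is immediate from the construction. If $\alpha\ge\alpha'$ and $\lambda\le\lambda'$ then for every $y\ge 0$,
\[
f_{\alpha,\lambda}(x,y) \;=\; \lambda e^{-\alpha y} \;\le\; \lambda' e^{-\alpha' y} \;=\; f_{\alpha',\lambda'}(x,y),
\]
so $R_{\alpha,\lambda} \subseteq R_{\alpha',\lambda'}$ pointwise, which gives $\Pcal_{\alpha,\lambda}\subseteq \Pcal_{\alpha',\lambda'}$ almost surely on the single underlying probability space carrying $\Pi$. I do not foresee any serious obstacle: the construction is entirely standard and the only step demanding care is the application of the marking theorem, which is precisely what turns the ``graph-subgraph'' picture beneath $f_{\alpha,\lambda}$ into a Poisson process of the correct intensity on the plane.
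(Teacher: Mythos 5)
Your construction is correct and is essentially identical to the paper's: the paper also takes a unit-rate Poisson process $\Qcal$ on $\eR\times(0,\infty)^2$, keeps the points with $z<\lambda e^{-\alpha y}$, and projects onto the first two coordinates, with the nested inclusions following from monotonicity of the intensity function. The only difference is that you spell out the mapping/thinning verification and the a.s.\ simplicity of the projection, which the paper dismisses as ``easily checked.''
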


\begin{proof}
For completeness we spell out the straightforward proof.
Let $\Qcal$ denote a Poisson process of intensity 1 on $\eR \times (0,\infty)^2$, and let us 
define 

\[ \Qcal_{\alpha, \lambda} := \{ (x,y,z) \in \Qcal : z < \lambda e^{-\alpha y} \}, \quad
 \Pcal_{\alpha,\lambda}' := \pi\left[\Qcal_{\alpha,\lambda}\right], 
\]

\noindent
where $\pi$ denotes the projection on the first two coordinates. 
It is easily checked that $\Pcal_{\alpha,\lambda}'$ is a Poisson process with intensity $f_{\alpha,\lambda}$.
By construction we have the desired inclusions $\Pcal_{\alpha,\lambda}' \subseteq \Pcal_{\alpha',\lambda'}'$ whenever 
$\alpha \geq \alpha'$ and $\lambda \leq \lambda'$. 
\end{proof}

\subsection{Percolation}

%
%
We say that {\em percolation} occurs if the resulting graph $\Gamma_{\alpha,\lambda}$ has an infinite component.

\begin{theorem}\label{lem:perc01} For every $\alpha,\lambda > 0$ it holds that 
$\Pee_{\alpha,\lambda}( \text{percolation} ) \in \{0,1\}$.
\end{theorem}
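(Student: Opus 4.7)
The plan is to derive the $0$-$1$ law from ergodicity of the underlying Poisson process under horizontal translations. The key point is that the intensity $f_{\alpha,\lambda}(x,y) = \lambda e^{-\alpha y}$ does not depend on $x$, so the law of $\Pcal_{\alpha,\lambda}$ is invariant under the group action $T_t : (x,y) \mapsto (x+t, y)$, $t \in \eR$. Moreover, the adjacency rule $|x_i - x_j| < e^{(y_i+y_j)/2}$ also depends only on the difference $x_i-x_j$, so the graph construction is translation-equivariant: $\Gamma(T_t P) = T_t \Gamma(P)$ as abstract graphs. In particular the event
\[
\Ecal \ := \ \{\text{$\Gamma_{\alpha,\lambda}$ has an infinite component}\}
\]
is $T_t$-invariant for every $t\in\eR$.

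Next I would verify that the shift $T_t$ acts ergodically (in fact mixingly) on the law of $\Pcal_{\alpha,\lambda}$. For this, let $\Ascr$ denote the collection of \emph{cylinder events}, i.e.\ events of the form $\{\, \abs{\Pcal_{\alpha,\lambda}\cap K_1}=k_1,\dots,\abs{\Pcal_{\alpha,\lambda}\cap K_m}=k_m\,\}$ where $K_1,\dots,K_m\subseteq\eR\times[0,\infty)$ are bounded Borel sets. If $A,B\in\Ascr$ depend on the process restricted to a bounded set $K\subseteq \eR\times[0,\infty)$, then for $\abs{t}$ large enough $K$ and $K+(t,0)$ are disjoint. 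By independence of the Poisson process on disjoint Borel sets,
\[
\Pee_{\alpha,\lambda}\bigl( A \cap T_{-t}(B) \bigr) \ = \ \Pee_{\alpha,\lambda}(A)\cdot\Pee_{\alpha,\lambda}(B) \quad \text{for all sufficiently large } \abs{t}.
\]
Since $\Ascr$ is a generating $\cap$-stable collection for the $\sigma$-algebra on point configurations, a standard $\pi$-$\lambda$ argument extends this to all measurable events, proving that $(T_t)_{t\in\eR}$ is mixing, and in particular ergodic.

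Finally I would put the two observations together: the invariance of $\Ecal$ under every $T_t$ means $\Ecal$ lies in the invariant $\sigma$-algebra of the action $(T_t)_{t\in\eR}$, and ergodicity then forces $\Pee_{\alpha,\lambda}(\Ecal)\in\{0,1\}$. The main (minor) obstacle is checking measurability of $\Ecal$ and that the map $P \mapsto \Gamma(P)$ is measurable in the natural sense; this follows because having an infinite component can be written as a countable union/intersection of cylinder events, e.g.\ by saying that for every $n$ there is a path of length $n$ starting from some point in a fixed bounded window, or by exhausting $\eR\times[0,\infty)$ by an increasing sequence of bounded boxes and using the fact that the number of points in each box is a.s.\ finite.
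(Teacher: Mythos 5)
Your argument is correct and is, in essence, exactly what the paper does: the paper observes translation invariance of $\Pcal_{\alpha,\lambda}$ and then invokes the standard ergodicity/0--1 law result (citing Meester--Roy, Proposition 2.6), which is precisely the mixing-via-cylinder-events argument you spell out. The only difference is that you unfold the cited reference into a self-contained proof, including the (routine but worth noting) check that the percolation event is measurable and shift-invariant.
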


\begin{proof} Observe that (the probability distribution of) $\Pcal_{\alpha,\lambda}$ is invariant under horizontal translations.
A standard argument (see for instance~\cite{MeesterRoy96}, Proposition 2.6) now shows that all events $E$ that are invariant
under horizontal translations have probability $\Pee(E) \in \{0,1\}$.
\end{proof}

In the sequel, we shall deal with following quantity:
\begin{equation}\label{eq:thetadef} 
\theta( y; \alpha, \lambda ) := \Pee( \text{$\Gamma( \{(0,y)\} \cup \Pcal_{\alpha,\lambda} )$ contains an infinite 
component visiting $(0,y)$} ),
\end{equation}
defined for all $y \geq 0$. 
This is often called the {\em percolation function} in the percolation literature.
The following observations make the link between $\theta$ and the event that percolation occurs more explicit, and will be used in the sequel.

\begin{lemma}\label{lem:linkthetaperc}
We have, for all $\alpha, \lambda > 0$:
\begin{enumerate}
\item\label{itm:thetaposifperc} $\Pee_{\alpha,\lambda}( \text{percolation} ) = 1$ if and only if
$\theta( y; \alpha, \lambda ) > 0$ for all $y \geq 0$;
\item\label{itm:thetazeroifnoperc} $\Pee_{\alpha,\lambda}(\text{percolation} ) = 0$ if and only if  
$\theta(y;\alpha,\lambda) = 0$ for all $y\geq 0$.
\end{enumerate}
\end{lemma}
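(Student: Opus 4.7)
The plan is to combine three ingredients: monotonicity of $y\mapsto\theta(y;\alpha,\lambda)$, the Mecke--Slivnyak formula for Poisson processes, and the zero--one law of Theorem~\ref{lem:perc01}.

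First I would observe that $y\mapsto\theta(y;\alpha,\lambda)$ is non-decreasing on $[0,\infty)$. Given any $0\le y_1\le y_2$ and any $y'\ge 0$ one has $t(y_1,y')\le t(y_2,y')$; so in a common coupling with the same realization of $\Pcal_{\alpha,\lambda}$, every neighbour of $(0,y_1)$ in $\Gamma(\{(0,y_1)\}\cup\Pcal_{\alpha,\lambda})$ is also a neighbour of $(0,y_2)$ in $\Gamma(\{(0,y_2)\}\cup\Pcal_{\alpha,\lambda})$, so the component of $(0,y_2)$ contains that of $(0,y_1)$, giving $\theta(y_1;\alpha,\lambda)\le\theta(y_2;\alpha,\lambda)$.

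Next I would apply the Mecke formula to the functional ``$p$ lies in an infinite component of $\Gamma(\Pcal_{\alpha,\lambda})$'', combined with Slivnyak's characterization of the reduced Palm distribution of a Poisson process and the translation invariance of $f_{\alpha,\lambda}$ in $x$, to obtain, for every bounded Borel set $B\subset\eR\times[0,\infty)$,
\begin{equation*}
\Ee\Bigl[\#\bigl\{p\in\Pcal_{\alpha,\lambda}\cap B: p\text{ in an infinite component of }\Gamma(\Pcal_{\alpha,\lambda})\bigr\}\Bigr] \;=\; \int_B f_{\alpha,\lambda}(x,y)\,\theta(y;\alpha,\lambda)\,\dd x\,\dd y.
\end{equation*}
I would then prove the key dichotomy: \emph{either $\theta(\,\cdot\,;\alpha,\lambda)\equiv 0$ on $[0,\infty)$, or $\theta(y;\alpha,\lambda)>0$ for every $y\ge 0$}. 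Suppose $\theta(y_0;\alpha,\lambda)>0$ for some $y_0$, fix an arbitrary $y\ge 0$ and set $B:=[-\tfrac12,\tfrac12]\times[y_0,y_0+1]$. By monotonicity, $\theta(y';\alpha,\lambda)\ge\theta(y_0;\alpha,\lambda)>0$ throughout $B$, so the integral above is strictly positive and thus with positive probability there exists $p=(x',y')\in\Pcal_{\alpha,\lambda}\cap B$ lying in an infinite component of $\Gamma(\Pcal_{\alpha,\lambda})$. Since $|x'|\le\tfrac12<1\le e^{(y+y')/2}$, the point $(0,y)$ is adjacent to $p$ in $\Gamma(\{(0,y)\}\cup\Pcal_{\alpha,\lambda})$; and since adding the vertex $(0,y)$ does not destroy existing edges, the infinite component of $p$ in $\Gamma(\Pcal_{\alpha,\lambda})$ is contained in the component of $(0,y)$ in the augmented graph, so $\theta(y;\alpha,\lambda)>0$.

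Finally I would link this to $\Pee_{\alpha,\lambda}(\text{percolation})$. If $\Pee_{\alpha,\lambda}(\text{percolation})=0$, then almost surely no Poisson point lies in an infinite component, so the Mecke identity forces $\int_B f_{\alpha,\lambda}\,\theta\,\dd x\,\dd y=0$ for every bounded $B$; hence $\theta=0$ Lebesgue-almost everywhere, and by the dichotomy $\theta\equiv 0$. Conversely, if $\theta(y;\alpha,\lambda)>0$ for all $y\ge 0$, the same identity produces a strictly positive expected number of Poisson points in infinite components, so $\Pee_{\alpha,\lambda}(\text{percolation})>0$, which Theorem~\ref{lem:perc01} promotes to $1$. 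Both~\ref{itm:thetaposifperc} and~\ref{itm:thetazeroifnoperc} then follow, being each other's complements under the zero--one law. The main subtlety will be the Palm-theoretic step: one must check measurability of the ``in an infinite component'' event and invoke Slivnyak to identify the reduced Palm distribution of $\Pcal_{\alpha,\lambda}$ at $(x,y)$ with the distribution of $\Pcal_{\alpha,\lambda}$ itself, so that the conditional probability appearing in the integrand is exactly $\theta(y;\alpha,\lambda)$.
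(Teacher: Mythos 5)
Your proof is correct, and it takes a genuinely different route from the paper's. The paper handles the direction ``percolation a.s.\ $\Rightarrow\theta(y)>0$'' by decomposing the half-plane into unit-width vertical strips, using translation invariance to conclude $\Pee(F_0)>0$, and then noting $F_0\subseteq E_y$ by the same geometric observation you use (any point in a unit strip near $0$ is a neighbour of $(0,y)$). For the reverse direction, ``$\theta(y_0)>0$ for some $y_0\Rightarrow$ percolation w.p.\ $>0$'', the paper invokes the FKG inequality: it intersects $E_{y_0}$ with the independent positive-probability event that a point of $\Pcal_{\alpha,\lambda}$ lies high above $(0,y_0)$, and checks that such a high point dominates all neighbours of $(0,y_0)$, so $E_{y_0}\cap G\subseteq E$. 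You instead run everything through the Mecke/Slivnyak identity and a monotonicity observation for $\theta$ in $y$ that the paper does not state explicitly. That lets you bypass FKG entirely: positivity of $\theta$ on a bounded box yields a positive expected number of Poisson points in infinite components, hence percolation with positive probability, and the zero--one law does the rest; conversely, non-percolation forces the Mecke integral to vanish, hence $\theta=0$ a.e., and monotonicity (or your dichotomy) upgrades this to $\theta\equiv 0$. Both arguments are valid; the paper's is more elementary and self-contained, while yours is cleaner once the Palm-theoretic machinery is taken for granted, and as a bonus it isolates the dichotomy ``$\theta\equiv 0$ or $\theta>0$ everywhere'' as a reusable structural fact. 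The one piece of due diligence you correctly flag is measurability of the ``lies in an infinite component'' functional for the Mecke formula; this is standard but worth a sentence.
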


\begin{proof}
Let us denote $E := \{\text{percolation}\}$ and let $E_y$ denote the event that 
$(0,y)$ is in an infinite component of $\Gamma( \{(0,y)\} \cup \Pcal_{\alpha, \lambda} )$.

Let us first assume that $\Pee(E) = 1$.
For $i \in\Zed$, let us denote%
\[ 
F_i := \left\{ \begin{array}{l} 
	\text{$[i,i+1)\times[0,\infty)$ contains a point of $\Pcal_{\alpha,\lambda}$ that } \\
        \text{ belongs to an infinite component of $\Gamma[\Pcal_{\alpha,\lambda}]$} 
       \end{array} \right\}.
\]

\noindent
We clearly have 

\[ 1 = \Pee(E) = \Pee\left( \bigcup_{i\in\Zed} F_i \right) 
\leq \sum_{i\in \Zed} \Pee( F_i ). \]

\noindent
On the other hand, we must also have that $\Pee(F_i) = \Pee( F_j )$ for all $i,j\in\Zed$
(since the point process $\Pcal_{\alpha,\lambda}$ is invariant under horizontal translations).
It follows that $\Pee( F_0 ) > 0$.
We now remark that $F_0 \subseteq E_y$ for every $y \geq 0$, since if $(x',y') \in [0,1)\times[0,\infty)$ then
$|x'-0| \leq 1 \leq e^{\frac12(y'+y)}$.
This shows that if $\Pee(E) = 1$, then $\Pee(E_y) > 0$ for all $y\geq 0$.

Let us now suppose that $\Pee(E_y) > 0$ for some $y \geq 0$.
Let $G$ denote the event that at least one point of $\Pcal_{\alpha,\lambda}$ falls inside
$[0,1]\times[y+2\ln 2, \infty)$.
By the FKG inequality for Poisson processes (see for instance~\cite{MeesterRoy96}, page 32), we have
$\Pee( E_y \cap G ) \geq \Pee( E_y )\Pee( G ) > 0$.
Now note that if $(x',y') \in \eR\times (0,\infty)$ is such that $|x'| \leq e^{(y+y')/2}$, then we
also have that $|x'| + 1 \leq 2e^{(y+y')/2} = e^{(y+2\ln2+y')/2}$.
This means that, if $G$ holds then there is a point $p \in \Pcal_{\alpha,\lambda}$ that 
is adjacent to every neighbour of $(0,y)$ in $\Gamma[\{(0,y)\}\cup\Pcal_{\alpha,\lambda}]$.
This implies that $E_y \cap G \subseteq E$. Thus, if $\Pee(E_y) > 0$ then also $\Pee( E ) > 0$
and in fact $\Pee(E) = 1$ by Lemma~\ref{lem:perc01}.

The above observations show that $\Pee(E)=1 \Leftrightarrow \Pee(E_y) > 0$ (for every $y$).
This implies both part~\ref{itm:thetaposifperc} and part~\ref{itm:thetazeroifnoperc}.
\end{proof}

In the next lemma, we show that when $\alpha$ crosses the value $1/2$, then $\theta (y; \alpha, \lambda)$ drops below 1. 

\begin{lemma}\label{lem:thetaalphaless12}
We have
\begin{enumerate}
\item\label{itm:thetaalphaless12i} For $\alpha \leq \frac12$, we have that $\theta( y; \alpha, \lambda ) = 1$ for all $y \geq 0$ and $\lambda > 0$;
\item\label{itm:thetaalphaless12ii} For $\alpha > \frac12$, we have that $\theta( y; \alpha, \lambda ) < 1$ for all $y \geq 0$ and $\lambda > 0$.
\end{enumerate}
\end{lemma}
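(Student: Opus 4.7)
The plan for both parts rests on a single computation: the expected number of Poisson points falling in the ball $\Bcal((0,y))$. For any $y \geq 0$, since the ball is the set of $(x',y')$ with $y'\geq 0$ and $|x'| < e^{(y+y')/2}$, the expected number of points of $\Pcal_{\alpha,\lambda}$ in $\Bcal((0,y))$ equals
\[
\mu(y;\alpha,\lambda) \;=\; \int_0^\infty \int_{-e^{(y+y')/2}}^{e^{(y+y')/2}} \lambda e^{-\alpha y'}\,dx'\,dy' \;=\; 2\lambda e^{y/2}\int_0^\infty e^{(\tfrac12-\alpha)y'}\,dy'.
\]
This integral diverges precisely when $\alpha \leq \tfrac12$, and otherwise equals $2\lambda e^{y/2}/(\alpha-\tfrac12)$. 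The two parts of the lemma will follow from this dichotomy.

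For part~\textbf{(i)}, when $\alpha \leq \tfrac12$ the integral is infinite, so the Poisson measure of $\Bcal((0,y))$ is infinite. The number of points of $\Pcal_{\alpha,\lambda}$ inside $\Bcal((0,y))$ is a Poisson random variable with infinite mean (equivalently, an $\eN$-valued random variable that is a.s.\ infinite, since any Borel region of infinite intensity measure contains infinitely many Poisson points almost surely). By definition of $\Gamma$, every such point is a neighbour of $(0,y)$ in the graph $\Gamma(\{(0,y)\}\cup \Pcal_{\alpha,\lambda})$. Thus $(0,y)$ a.s.\ has infinite degree, so its connected component is a.s.\ infinite, giving $\theta(y;\alpha,\lambda) = 1$.

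For part~\textbf{(ii)}, when $\alpha > \tfrac12$ we have $\mu(y;\alpha,\lambda) = 2\lambda e^{y/2}/(\alpha-\tfrac12) < \infty$. The number of points of $\Pcal_{\alpha,\lambda}$ in $\Bcal((0,y))$ is Poisson with this finite mean, so with probability $e^{-\mu(y;\alpha,\lambda)} > 0$ there are no such points. On this event $(0,y)$ has no neighbour in $\Gamma(\{(0,y)\}\cup \Pcal_{\alpha,\lambda})$, so its component is just $\{(0,y)\}$ and in particular not infinite. Therefore
\[
\theta(y;\alpha,\lambda) \;\leq\; 1 - e^{-\mu(y;\alpha,\lambda)} \;<\; 1.
\]

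I do not anticipate any serious obstacle: the entire argument is driven by the elementary computation of $\mu(y;\alpha,\lambda)$ and the observation that for a Poisson point process the number of points in a measurable region is Poisson distributed with mean equal to the intensity integral (hence a.s.\ infinite on regions of infinite intensity, and with atom at $0$ of mass $e^{-\mu}$ on regions of finite intensity $\mu$). The only minor subtlety worth flagging in the write-up is that ``infinite degree'' immediately implies ``in an infinite component'' — a one-line remark.
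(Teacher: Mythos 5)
Your proof is correct and follows essentially the same route as the paper: both compute the expected number of Poisson points in $\Bcal((0,y))$ (equivalently, the expected degree of $(0,y)$), observe that the integral $\int_0^\infty e^{(\frac12-\alpha)y'}\,\dd y'$ is infinite precisely when $\alpha\leq\frac12$, and conclude that $(0,y)$ almost surely has infinite degree in the first case and is isolated with probability $e^{-\mu}>0$ in the second. The paper's parenthetical ``(Assuming that $\alpha<1/2$)'' after the displayed computation is a typo for $\alpha>1/2$, which your write-up gets right.
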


\begin{proof} 
We start with the proof of~\ref{itm:thetaalphaless12ii}.
Let $D$ denote the degree of $(0,y)$ in $\Gamma[ \{(0,y)\}\cup \Pcal_{\alpha,\lambda} ]$.
Then $D$ has a Poisson distribution with mean 

\begin{equation}\label{eq:ED} 
\Ee D = \int_0^\infty \int_{-e^{(y+y')/2}}^{e^{(y+y')/2}} \lambda e^{-\alpha y'}{\dd}x{\dd}y'
 = 2\lambda e^{y/2} \int_0^\infty e^{(\frac12-\alpha)y'}{\dd}y' < \infty.
\end{equation}

(Assuming that $\alpha < 1/2$.)
Hence $(0,y)$ is isolated with probability $e^{-\Ee D} > 0$ and therefore $1-\theta (y;\alpha,\lambda) >0$.

For the proof of~\ref{itm:thetaalphaless12i}, we note that 
the computations~\eqref{eq:ED} give that this time $\Ee D = \infty$.
Hence, by the properties of the Poisson process, almost surely, $(0,y)$ has infinitely many neighbours and
in particular lies in an infinite component.
\end{proof}

We will now restrict our attention to the case $\alpha> 1/2$  and we show that a
nontrivial phase transition in $\lambda$ and $\alpha$ takes place. We begin with the case when $\alpha=1$.

\begin{lemma}\label{lem:perc0}
There exists a $\lambda^- > 0$ such that if $\lambda < \lambda^-$ then 
$\Pee_{1,\lambda}(\text{percolation}) = 0$.
\end{lemma}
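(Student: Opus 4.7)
\vspace{1ex}\noindent\textbf{Proof plan.} My plan combines a first-moment bound on strictly height-increasing paths with a one-dimensional gap argument inside bounded horizontal strips. The threshold $\lambda^-$ will be at most $1/4$, perhaps smaller to accommodate the concluding step.

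\emph{Step 1: increasing paths are few.} I will show that the expected number of strictly height-increasing paths of length $k$ from a fixed point $(0,y_0)$ equals $(4\lambda)^k$. By the Mecke formula, each $x_i$-integration contributes a factor $2e^{(y_{i-1}+y_i)/2}$, which combined with the intensity $\lambda e^{-y_i}$ gives $2\lambda e^{(y_{i-1}-y_i)/2}$ per step; the substitution $z_i := y_i-y_{i-1}>0$ decouples the $y$-integrals, each producing $\int_0^\infty e^{-z/2}\,dz=2$. The answer $(4\lambda)^k$ is notably independent of $y_0$. For $\lambda<1/4$ the geometric sum converges, so by Markov and Borel--Cantelli almost surely no infinite strictly height-increasing path emanates from any given vertex.

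\emph{Step 2: bounded strips cannot percolate.} For every $H$, edges of $\Gamma_{1,\lambda}$ between points of the strip $\eR\times[0,H]$ satisfy $|x_i-x_j|<e^H$. Projecting onto the $x$-axis gives a Poisson process of intensity $\lambda(1-e^{-H})$ in which adjacency requires $x$-distance less than $e^H$. Since consecutive gaps exceeding $e^H$ occur almost surely infinitely often, every connected component of this projected graph is bounded in $x$, so every connected component of the strip-restricted induced subgraph is finite. Consequently a $\Gamma$-cluster that lies entirely inside such a strip coincides with its cluster in the strip-restricted graph and is finite.

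\emph{Step 3: combining.} By the $0$--$1$ law (Theorem~\ref{lem:perc01}) and Lemma~\ref{lem:linkthetaperc}, it suffices to prove $\theta(0;1,\lambda)=0$, i.e.~the cluster of $p_0=(0,0)$ is a.s.\ finite. Step 2 says an infinite cluster must have unbounded heights, and the \textbf{main obstacle} is converting this into the infinite strictly-increasing path ruled out in Step 1, since paths inside the cluster reaching high vertices may zigzag. I plan to bridge the gap via a \emph{ladder-point} construction: for each threshold $h$, call $q^*\in C(p_0)$ with $y(q^*)\geq h$ an $h$-ladder point if it is reachable from $p_0$ by a path whose other vertices lie in $\eR\times[0,h)$. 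The cluster reaches height $h$ iff such a point exists, and a Mecke computation analogous to Step 1 (with intermediate heights restricted to $[0,h]$) controls the expected number. Iterating along a sequence of thresholds $h_1<h_2<\cdots$ and bounding the subpaths between consecutive ladder points via the strip estimate from Step 2, the expected number of $m$-fold ladder chains decays geometrically in $m$ for $\lambda$ small enough, and Borel--Cantelli rules out infinite ladder chains a.s. Hence the cluster has bounded heights a.s., is therefore finite, and $\theta(0;1,\lambda)=0$, which completes the proof.
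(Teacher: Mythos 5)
Steps 1 and 2 of your plan are correct. The Mecke computation in Step~1 indeed gives $(4\lambda)^k$ as the expected number of strictly height-increasing paths of length $k$ from a fixed point, and this is less than $1$ geometrically once $\lambda<1/4$. Step~2 is also fine: inside $\eR\times[0,H]$ every edge has $x$-span less than $e^H$, the projected point process is a homogeneous Poisson process on $\eR$, and spontaneous gaps of size $e^H$ a.s.\ occur, so every component of the strip-restricted graph is finite. You have therefore correctly reduced the problem to showing that the cluster of $(0,0)$ a.s.\ has bounded height.

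The gap is in Step~3, and it is a real one. Your ladder-chain argument needs, for each threshold $h$, a quantitative control on the set of points reachable from $(0,0)$ by paths confined to $\eR\times[0,h)$, because an $h$-ladder point is found by first spreading sideways in the strip and only then jumping above height $h$. The Mecke computation of Step~1 does not transfer: once you allow zigzagging in the strip, each downward step contributes a factor $\int 2e^{(y_{i-1}+y_i)/2}\lambda e^{-y_i}\,\dd y_i$ over $y_i < y_{i-1}$, which (unlike the upward case) is not a bounded multiple of $\lambda$ uniformly in $y_{i-1}$, so summing over paths with arbitrary length and arbitrary signs of increments does not yield a convergent geometric series. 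Step~2 only tells you that the strip component is a.s.\ finite; it gives no moment bound. In fact the natural estimates go the wrong way: the typical $x$-extent of the strip component up to height $h$ is of order $e^{\lambda e^h}$ (the waiting distance for a gap $\geq e^h$ in a rate-$\approx\lambda$ Poisson process), and the expected number of Poisson points above height $h$ lying within that horizontal range is of order $e^{\lambda e^h}\,e^{-h}$, which blows up rather than decays. So the ``geometric decay in $m$'' of the expected number of $m$-fold ladder chains is not justified, and I do not see how to fix it along these lines without an additional structural input.

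What the paper does instead is precisely the missing structural input. It runs a one-sided exploration tracking only the running extremes $(x_i,y_i)$ of the cluster and, at each step, examines just the two half-balls $\Bcal^+((x_i,y_i))$ and $\Bcal^-((-x_i,y_i))$, which contain an expected number $4\lambda$ of fresh Poisson points (\emph{independently of $y_i$}). The crucial deterministic lemma is that any edge leaving the current vertical strip $[-x_i,x_i]\times[0,\infty)$ must land in one of these half-balls, so the exploration cannot miss an escape. Since each half-ball has $O(\lambda)$ expected points, the height increments $y_{i+1}-y_i$ are stochastically dominated by i.i.d.\ Gumbel variables with mean $2\ln(4\lambda)+2\gamma$, which is negative for $\lambda$ small, and the law of large numbers terminates the exploration a.s. That step replaces your entire Step~3 and is not a Mecke bound at all; it is an exploration with negative drift. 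Your Steps 1 and 2 remain useful scaffolding, but the argument needs the half-ball exploration lemma (or an equivalent) to close.
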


\begin{proof}
We will show that, provided that $\lambda$ is small enough, we have $\theta (0;1,\lambda) = 0$. 
By Lemma~\ref{lem:linkthetaperc} this will prove the result. 

Let us thus consider the component of $(0,0)$ in $\Gamma[\{(0,0)\} \cup \Pcal_{1,\lambda}]$.
We will iteratively construct a sequence of points $(x_0, y_0), (x_1,y_1), \dots$. The sequence may be either infinite or finite
and it will have the following properties: 
\begin{itemize}
\item[{\bf(p-i)}] $x_0=y_0=0$;
\item[{\bf(p-ii)}] $x_i < x_{i+1}$;
\item[{\bf(p-iii)}] If the sequence is finite, then the component of $(0,0)$ is 
contained in $[-\max_i x_i, \max_i x_i ] \times [0,\infty)$.
\end{itemize}
For notational convenience we will write $p_i^+ = (x_i, y_i)$ and $p_i^- = (-x_i, y_i)$.

Assume that $x_i$ and $y_i$ have been determined, for some $i\geq 0$. We consider the points $(\Bcal^+ (p_i^+) \cup \Bcal^-(p_i^-) ) \cap 
\Pcal_{1,\lambda}$, that is, the points of $\Pcal_{1,\lambda}$ that either belong to the right half-ball around $p_i^+$
or the left half-ball around $p_i^-$. 
If there are no such points, then the construction stops (and $x_j, y_j$ are not defined for $j \geq i+1$).
Otherwise, we set

\[ \begin{array}{l}
x_{i+1} := \max \{ |x| \ : \ p'=(x,y) \in (\Bcal^+ (p_i^+) \cup \Bcal^-(p_i^-) )  \cap \Pcal_{1,\lambda}\}, \\ 
y_{i+1} := \max \{ y \ : \ p'=(x,y) \in (\Bcal^+ (p_i^+) \cup \Bcal^-(p_i^-) )  \cap \Pcal_{1,\lambda}\}. \\
   \end{array} \]

\noindent
The conditions {\bf(p-i)} and {\bf(p-ii)} are clearly met by the construction.
Let us remark that the points $p_i^+$ and $p_i^-$ may not belong to $\Pcal_{1,\lambda}$ and that, by construction, there is no point 
among $\Bcal^+ (p_i^+) \cap \Pcal_{1,\lambda}$ that is higher than $p_{i+1}^+$ or to the right of it (and similarly for $p_i^-$). 


Now suppose that $(x_i,y_i)$ is defined and there exist points $p' \in \Pcal_{1,\lambda} \setminus [-x_i,x_i]\times[0,\infty)$ and 
$p'' \in \Pcal_{1,\lambda} \cap [-x_i,x_i]\times[0,\infty)$ such that $p' \in \Bcal(p'')$.
We will show that in this case, the sequence does not terminate in step $i$, i.e.,~$(x_{i+1},y_{i+1})$ will be defined as well.
This will prove our construction that satisfies condition {\bf(p-iii)} above, since the sequence can only stop at step $i$
if the component of $(0,0)$ is contained in $[-x_i,x_i]\times[0,\infty)$.

By symmetry, we can assume without loss of generality that $x(p') > x_i$.
Let $1 \leq j \leq i$ be such that $x_{j-1} < |x(p'')| \leq x_j$. (Note that a.s. there is no point in 
$\Pcal_{\alpha, \lambda}$  with $x$-coordinate equal to 0, so $j$ is well-defined a.s.) Note that $y(p'') \leq y_j$ by the definition of $y_j$. 
We see that 

\[ |x_j - x(p')| \leq |x(p'')-x(p')| \leq e^{\frac12(y(p'')+y(p'))} \leq e^{\frac12(y_j+y(p'))}. \]

\noindent
In other words, $p' \in \bigcup_{j=0}^i \Bcal(p_j^+)$. 
Since $x(p')>x_i$ we must have
$p' \in \Bcal_+(p_i^+)$ -- otherwise $x_i$ would be even bigger by definition of of the sequence $(x_n,y_n)$. 
Thus, $(x_{i+1}, y_{i+1})$ is defined so that condition {\bf(p-iii)} is indeed met.

Next, we will show that, provided that $\lambda$ is sufficiently small, almost surely the sequence $(x_0,y_0), (x_1,y_1), \dots$ 
is finite. That is, at some point in the construction, the set $\Bcal^-(p_i^-) \cup \Bcal^+(p_i^+)$ does not contain any point of $\Pcal_{1,\lambda}$.

Suppose that we have already found $(x_1,y_1), \dots, (x_i,y_i)$. 
Observe that, by construction, both $\Bcal^+(p_i^+) \cap \bigcup_{j=0}^{i-1} \Bcal^+(p_j^+)$ 
and $\Bcal^-(p_i^-) \cap \bigcup_{j=0}^{i-1} \Bcal^-(p_j^-)$
cannot contain any point of $\Pcal_{1,\lambda}$ (for otherwise the value of $x_i$ would be even larger). 
Thus, $y_{i+1}$ is the maximum $y$-coordinate
among points of $\Pcal_{1,\lambda}$ inside $\left(\Bcal^+(p_i^+) \setminus \bigcup_{j=0}^{i-1} \Bcal^+(p_j^+)\right)
\cup \left(\Bcal^-(p_i^-) \setminus \bigcup_{j=0}^{i-1} \Bcal^-(p_j^-)\right)$ 
(provided there is at least one such point -- for convenience let us set $y_{i+1} = -\infty$ otherwise).
The expected number $\mu(z|x_1,y_1,\dots,x_i,y_i)$ of points in $\left(\Bcal^+(p_i^+) \setminus \bigcup_{j=0}^{i-1} \Bcal^+(p_j^+)\right)
\cup \left(\Bcal^-(p_i^-) \setminus \bigcup_{j=0}^{i-1} \Bcal^-(p_j^-)\right)$  with $y$-coordinate at least $z$ satisfies:

\[ \begin{array}{rcl} 
    \mu(z|x_1,y_1,\dots,x_i,y_i)
    & \leq & 
    \displaystyle 2\int_z^\infty \int_{x_i}^{x_i+t(y',y_i)} f_{1,\lambda} (x',y') dx' dy'  \\
    & = & 
    \displaystyle 2\lambda e^{\frac{y_i}{2}}\int_z^{\infty} e^{-\frac{y'}{2}}dy' \\
    & = &   
    4\lambda e^{(y_i-z)/2}. 
   \end{array}
\]

\noindent
Thus 

\begin{equation}\label{eq:gumbel} 
\Pee( y_{i+1} \leq z | x_1,y_1,\dots,x_i,y_i ) = e^{-\mu(z|x_1,y_1,\dots,x_i,y_i)} \geq e^{-4\lambda e^{(y_i-z)/2}}. 
\end{equation}

\noindent
Writing $\Delta_i := y_{i}-y_{i-1}$, we see that these increments $\Delta_i$ are stochastically dominated
by an i.i.d.~sequence $\{ \hat{\Delta}_i\}_{i\in \mathbb{N}}$ with common cumulative distribution function 

\[ F_{\hat{\Delta}_1}(x) = e^{-4\lambda e^{-x/2}} = e^{-e^{-\frac{(x-2\ln (4\lambda) )}{2}}}. \]

\noindent
This is a Gumbel distribution, and it satisfies $\Ee \hat{\Delta}_1 = 2\ln (4\lambda) + 2\gamma$, where $\gamma$ is Euler's 
constant, and $\Var\hat{\Delta}_1 = 2\pi^2/3$ (see for example~\cite{Stats} p. 542). 
Observe that for $\lambda < e^{-\gamma}/4$ we have $\Ee \hat{\Delta}_1 < 0$. Let us thus fix such a $\lambda$.
We have that

\[ \Pee( \text{$(x_i,y_i)$ exists} ) \leq \Pee( \hat{\Delta}_1+\dots+\hat{\Delta}_i \geq 0 ). \]

\noindent
The law of large numbers now implies that the right hand side tends to zero as $i\to\infty$.
It follows that the sequence $(x_0,y_0), (x_1,y_1), \dots$ is finite with probability one. 
Thus, with probability one, there is a $k \in \eN$ such that the component of $(0,0)$ is contained in the strip
$S_k := [-k,k]\times [0,\infty)$.
Let us remark that $|S_k \cap \Pcal_{1,\lambda}|$ is a Poisson random variable with mean $2k\lambda < \infty$.
Hence, almost surely, $|S_k \cap \Pcal_{1,\lambda}|$ is finite for all $k$. So in particular the component of $(0,0)$ is also almost surely
finite.
This proves that $\theta(0;1,\lambda) = 0$ when $\lambda < e^{-\gamma}/2$.
\end{proof}

The above proof can be extended and show the fact that when $\alpha > 1$, then for any $\lambda >0$ a.s. 
no percolation occurs. However, we shall not need to do this here as our goal is to study the largest component of
the KPKVB model, and the corresponding case has already been covered in~\cite{BFMgiantEJC}. 

Next, we will show that for $\alpha = 1$, there exists $\lambda^+>0$ such that 
when $\lambda > \lambda^+$, then percolation occurs with positive probability. 
As we shall see later (cf. Lemma~\ref{lem:lambdacexists}), this fact together with Lemma~\ref{lem:perc0} imply the existence 
of a critical value for the parameter $\lambda$ around which an infinite component emerges. 
Furthermore, we also prove that for any $\alpha <1$ and any $\lambda >0$ percolation occurs with positive probability.

The proofs of these facts follow the same strategy. 
More specifically, we will consider a discretisation of the model, by dividing the upper half-plane $\eR \times [0,\infty)$ into 
rectangles that contain in expectation the same number of points from $\Pcal_{1,\lambda}$. Furthermore, the rectangles are such 
that for any two rectangles that have intersecting sides any two points that are contained in them are adjacent in $\Gamma_{1,\lambda}$.
We define rectangles 

\begin{equation}\label{def:Rij}
 R_{i,j} := \{ (x,y) : i\ln 2 < y \leq (i+1)\ln 2,  j 2^{i-1} < x \leq (j+1)2^{i-1}\},
\end{equation}
 \noindent
where $i \geq 0$ and $j \in \mathbb{Z}$. 
See Figure~\ref{fig:Rij} for a depiction.
\begin{figure}[h]
\begin{center}
\input{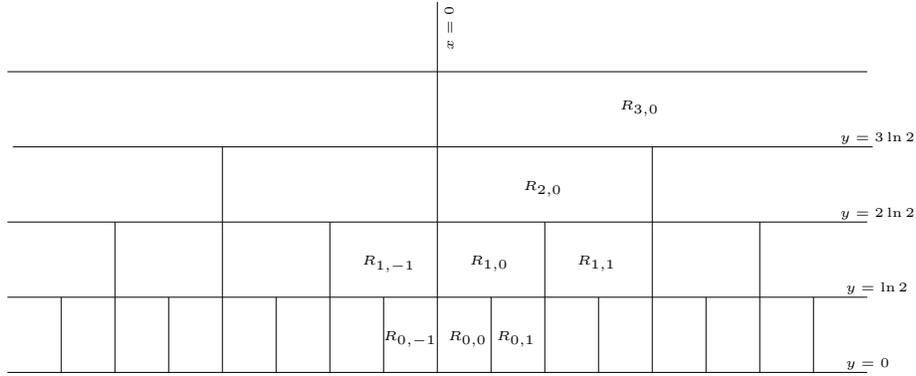}
\end{center}
\caption{The dissection into the rectangles $R_{i,j}$.\label{fig:Rij}}
\end{figure}
Observe that for $i\geq 1$, the rectangle $R_{i,j}$ shares an edge (from below) with the rectangles $R_{i-1,2j}$ and $R_{i-1,2j+1}$ 
and (from above) with the rectangle $R_{i+1,\lfloor j/2 \rfloor}$. It also shares its side edges with the rectangles $R_{i,j-1}$ and 
$R_{i,j+1}$. We assert that:

\begin{lemma} \label{eq:Rij}
For all $p \in R_{i,j}$ we have 
\begin{equation*} 
\Bcal (p) \supseteq R_{i+1,\lfloor j/2 \rfloor} \cup R_{i,j-1} \cup R_{i,j+1} \cup R_{i-1,2j} \cup R_{i-1,2j+1}. 
\end{equation*}
\end{lemma}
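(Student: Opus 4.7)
The plan is to prove this by direct case-by-case verification: for each of the (at most five) rectangles listed on the right-hand side, I show that every $p'=(x',y')$ in that rectangle satisfies $|x-x'|<e^{(y+y')/2}$, which by definition of $\Bcal(p)$ puts $p'$ inside it. The horizontal reflection $x\mapsto 2(j+\tfrac12)2^{i-1}-x$ fixes $R_{i,j}$ setwise and swaps $R_{i,j-1}\leftrightarrow R_{i,j+1}$ and $R_{i-1,2j}\leftrightarrow R_{i-1,2j+1}$, so only three subcases actually need to be checked.

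For each subcase my strategy is to upper-bound $|x-x'|$ from the horizontal geometry of the two rectangles, and independently to lower-bound $e^{(y+y')/2}$ using the smallest possible values of $y,y'$ in the two rectangles. Concretely:
\begin{itemize}
\item \textbf{Side neighbor $R_{i,j+1}$:} Since $j2^{i-1}<x\le(j+1)2^{i-1}<x'\le(j+2)2^{i-1}$, we obtain $0<x'-x<2^i$. Since $y,y'>i\ln 2$, we obtain $e^{(y+y')/2}>2^i$. Chaining, $|x-x'|<2^i<e^{(y+y')/2}$.
\item \textbf{Upper neighbor $R_{i+1,\lfloor j/2\rfloor}$:} A short case split on the parity of $j$ shows that both $x$ and $x'$ lie in the common interval $(\lfloor j/2\rfloor 2^i,(\lfloor j/2\rfloor+1)2^i]$ of length $2^i$; indeed when $j=2k$ the $x$-range of $R_{i,j}$ is $(k2^i,k2^i+2^{i-1}]$ and when $j=2k+1$ it is $(k2^i+2^{i-1},(k+1)2^i]$, and both sit inside the $x$-range of $R_{i+1,k}$. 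So $|x-x'|<2^i$, while $y>i\ln 2$ and $y'>(i+1)\ln 2$ give $e^{(y+y')/2}>2^{i+1/2}>2^i$.
\item \textbf{Lower neighbor $R_{i-1,2j}$ (requires $i\ge1$):} A similar parity computation shows the $x$-range of $R_{i-1,2j}$, namely $(j2^{i-1},j2^{i-1}+2^{i-2}]$, is contained in that of $R_{i,j}$, so $|x-x'|<2^{i-1}$. Together with $y>i\ln 2$ and $y'>(i-1)\ln 2$, which give $e^{(y+y')/2}>2^{i-1/2}=\sqrt 2\cdot 2^{i-1}$, this yields the required strict inequality with room to spare.
\end{itemize}

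The only mildly delicate point is the side-neighbor case, where the two bounds coincide at the common value $2^i$ in the limit; this is handled by noting that both inequalities are strict, which is exactly why the rectangles \eqref{def:Rij} are defined to be half-open (the boundary between $R_{i,j}$ and $R_{i,j+1}$ is attained on exactly one side). In all three subcases the upper bound on $|x-x'|$ is strictly dominated by the lower bound on $e^{(y+y')/2}$, so $p'\in\Bcal(p)$ for every $p'$ in each listed rectangle. No part of this is expected to pose a real obstacle; it is essentially a verification that \eqref{def:Rij} was chosen correctly so that adjacent cells in this diadic dissection are always connected in $\Gamma(\cdot)$.
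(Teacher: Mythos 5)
Your proof is correct and takes the same direct-verification approach as the paper: upper-bound $|x-x'|$ by the horizontal span of the two boxes, lower-bound $e^{(y+y')/2}$ by the infimum of $(y+y')/2$, and compare. Incidentally, your upper-neighbor chain (comparing $2^i$ with $2^{i+1/2}$) is tighter and more careful than the paper's, which passes through the intermediate inequality $2^{i+1}=e^{(i\ln 2+(i+2)\ln 2)/2}<e^{(y(p)+y(p'))/2}$ --- an inequality that can fail when $y(p)+y(p')$ is close to its infimum $(2i+1)\ln 2$, even though the overall conclusion of course still holds.
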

\begin{proof}
Indeed, let $p' \in R_{i,j+1}$. Then 

\[ |x(p)-x(p')| \leq 2\cdot 2^{i-1} = e^{\frac{2i\ln 2}{2}} < e^{\frac{y(p)+ y(p')}{2}}. \]

\noindent 
By symmetry, the same holds if $p' \in R_{i,j-1}$. 
If $p' \in R_{i+1,\lfloor j/2 \rfloor}$, then 

\[ |x(p) - x(p')|\leq 2\cdot 2^{i-1} < 2^{i+1} = e^{\frac{ i \ln 2 + (i+2)\ln 2 }{2}} < e^{\frac{y(p) + y(p')}{2}}. \]
\noindent
Note that this also implies that for any $p \in R_{i,j}$ we have $\Bcal (p) \supset R_{i-1,2j}, R_{i-1,2j+1}$. 
This concludes the proof of the Lemma.
\end{proof}

\noindent
Phrased differently, Lemma~\ref{eq:Rij} states that any two points of $\Pcal_{\alpha,\lambda}$ that belong to adjacent boxes must 
be adjacent in $\Gamma_{\alpha, \lambda}$.   

The general strategy here is to consider a graph (which we will denote by $\Rscr$) whose vertex set consists of those boxes 
that contain at least one point and any pair of such boxes are adjacent in this graph, if they touch along a side (i.e.~they share more than just a corner). 
The above lemma implies that if the graph $\Rscr$ contains an infinite component, then 
$\Gamma_{\alpha, \lambda}$ percolates. 

The following gives the expected number of points of $\Pcal_{\alpha,\lambda}$ that fall inside $R_{i,j}$.
\begin{equation}\label{eq:RijExpect}
\Ee |R_{i,j} \cap \Pcal_{\alpha,\lambda}| = \lambda \int_{i\ln 2}^{(i+1)\ln 2} \int_{0}^{2^{i-1}} 
e^{-\alpha y}  dxdy = \frac{\lambda}{\alpha} 2^{i-1} \left( 2^{-\alpha i} - 2^{-\alpha(i+1)} \right).
\end{equation}
We will use this formula below. 

The next lemma uses this discretisation in order to show that when $\alpha =1$ and $\lambda$ is large enough, then percolation 
occurs with probability 1. 

\begin{lemma}\label{lem:perc1} 
There exists a $\lambda^+ < \infty$ such that if $\lambda > \lambda^+$ then 
$\Pee_{1,\lambda}(\text{percolation}) = 1$.
\end{lemma}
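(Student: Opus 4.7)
The plan is to reduce to site percolation on the box graph $\Rscr$ and then show that this site percolation has an infinite cluster with positive probability when $p$ is close to $1$. Since $\alpha=1$, equation~\eqref{eq:RijExpect} gives $\Ee|R_{i,j}\cap\Pcal_{1,\lambda}| = \lambda/4$ \emph{uniformly} in $(i,j)$. By standard properties of Poisson processes (the $R_{i,j}$ partition the upper half-plane into disjoint measurable sets), the events $\{R_{i,j}\cap\Pcal_{1,\lambda}\neq\emptyset\}$ are independent, each of probability $p := 1 - e^{-\lambda/4}$, with $p\to 1$ as $\lambda\to\infty$. By Lemma~\ref{eq:Rij}, two such ``occupied'' boxes that are adjacent in $\Rscr$ contain points of $\Pcal_{1,\lambda}$ that are adjacent in $\Gamma_{1,\lambda}$, so an infinite cluster of occupied boxes in $\Rscr$ yields an infinite component in $\Gamma_{1,\lambda}$. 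Combined with the $0$-$1$ law of Theorem~\ref{lem:perc01}, the lemma reduces to showing that for $p$ sufficiently close to $1$, Bernoulli-$p$ site percolation on $\Rscr$ has an infinite cluster with positive probability.

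To establish the latter, I would use a block renormalisation argument in the spirit of Liggett-Schonmann-Stacey. Fix a large integer $K$ and partition $\Rscr$ into dyadic ``super-blocks'' $B_{a,b}$ indexed by $(a,b)\in\mathbb Z_{\geq 0}\times\mathbb Z$, where $B_{a,b}$ consists of those $R_{i,j}$ with $i\in[aK,(a+1)K)$ and $j\in[b\cdot 2^{aK},(b+1)\cdot 2^{aK})$. Declare $B_{a,b}$ \emph{good} if the subgraph of $\Rscr$ induced by its occupied boxes contains a connected component that reaches each of the four faces of $B_{a,b}$. For $p$ close to $1$ one expects $\Pee(B_{a,b}\text{ good})\to 1$, and the goodnesses of distinct super-blocks depend on disjoint sets of boxes and are therefore independent. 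Crossing paths inside adjacent good super-blocks can then be concatenated to produce the desired infinite cluster, provided that the good-super-block process percolates on the super-block adjacency graph.

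The main obstacle is that the super-block adjacency graph inherited from $\Rscr$ is itself dyadic-tree-like (each $B_{a,b}$ has a single ``parent'' $B_{a+1,\lfloor b/2^K\rfloor}$ together with $2^K$ ``children''), rather than a standard $\mathbb Z^2$-lattice, so the usual reduction to planar percolation is not immediate. A cleaner alternative is to bypass renormalisation and argue directly that $\Rscr$ is non-amenable: the graph-ball of radius $2n$ around $R_{0,0}$ contains at least $2^n$ vertices (climb $n$ steps up the spine from $R_{0,0}$ to $R_{n,0}$, then descend the binary tree of depth $n$ rooted at $R_{n,0}$), so $\Rscr$ has exponential volume growth, and with a little additional work one can verify that $\Rscr$ has positive Cheeger constant. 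Standard results on site percolation on bounded-degree non-amenable graphs then give $p_c(\Rscr) < 1$, and taking $\lambda^+$ large enough that $p = 1 - e^{-\lambda^+/4}$ exceeds this threshold completes the proof.
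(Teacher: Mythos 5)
Your reduction to site percolation on $\Rscr$ via Lemma~\ref{eq:Rij}, and the observation that (because $\alpha=1$) the boxes $R_{i,j}$ are occupied independently and each with the same probability $p=1-e^{-\lambda/4}$, exactly match the paper's setup. The gap is in the second half, where you try to deduce that site percolation on $\Rscr$ occurs for $p$ close to~$1$.

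Your ``cleaner alternative'' does not work: $\Rscr$ is amenable, not non-amenable. Exponential volume growth, which you correctly establish, does not imply a positive Cheeger constant (the lamplighter graph is a standard counterexample). Concretely, consider the dyadic triangle $S_n := \{R_{i,j} : 0 \leq i \leq n,\ 0 \leq j < 2^{n-i}\}$. Then $|S_n| = 2^{n+1}-1$, but the only vertices of $S_n$ with a neighbour outside $S_n$ lie on the two slanted sides and at the apex: the two lower neighbours $R_{i-1,2j}$ and $R_{i-1,2j+1}$ of any $R_{i,j}\in S_n$ are again in $S_n$, and so is the upper neighbour $R_{i+1,\lfloor j/2\rfloor}$ unless $i=n$. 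Hence $|\partial S_n| = O(n)$, so $|\partial S_n|/|S_n|\to 0$, the Cheeger constant of $\Rscr$ is $0$, and the Benjamini--Schramm bound $p_c\leq 1/(1+h)$ is not available to you. (In fact the tree edges of $\Rscr$ alone form a canopy tree, for which site percolation has $p_c=1$; the horizontal edges at each level are essential.) Your first route, renormalisation, runs into the obstacle you yourself point out and is left unresolved, so neither branch of the proposal closes the argument.

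What actually does the work in the paper is a Peierls-type contour estimate that exploits a specific geometric feature of $\Rscr$ rather than any coarse invariant. If $R_{0,0}$ is active but not in an infinite cluster of $\Rscr$, then the inactive boxes contain a path in $\overline{\Rscr}$ (adjacency by touching along a side or at a corner) from some $R_{0,j^-}$ with $j^-<0$ to some $R_{0,j^+}$ with $j^+>0$; since $R_{0,0}$ itself is active, this blocking path must cross the line $x=0$ above level~$0$ and hence pass through some $R_{i,0}$ with $i>0$. From $R_{i,0}$ it must descend back to level~$0$, and since each step in $\overline{\Rscr}$ changes the level index by at most~$1$, this yields a self-avoiding path of inactive boxes of length at least $i$ starting at $R_{i,0}$. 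As $\overline{\Rscr}$ has maximum degree~$8$, there are at most $7^i$ such paths, each occurring with probability roughly $q_{act}^{\,i}$ where $q_{act}=e^{-\lambda/4}$, so $\Pee(R_{0,0}\text{ in a finite cluster})\leq q_{act}+\sum_{i\geq 1}(7q_{act})^i$, which is less than~$1$ once $\lambda$ is large. The $0$--$1$ law of Theorem~\ref{lem:perc01} then upgrades positive probability to probability~$1$. Supplying this estimate is what your proof is missing, and it is considerably shorter than either of the routes you attempted.
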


\begin{proof} 
We consider the above discretisation and declare \emph{active} a rectangle that contains at least one point from $\Pcal_{1,\lambda}$. 
We define the graph $\Rscr$ whose vertex set is the (random) set of active rectangles and a pair of active rectangles share an edge 
if they touch along a side. 
The definition of the discretisation and, in particular, Lemma~\ref{eq:Rij} implies that if the graph $\Rscr$ contains an infinite connected component then $\Gamma_{\alpha, \lambda}$ contains one as well. 

We will show that the probability that a rectangle is active can become as close to 1 as we please, if we make 
$\lambda$ large enough.  
In the light of Theorem~\ref{lem:perc01} it thus suffices to show that, for $\lambda$ sufficiently large, 
the rectangle $R_{0,0}$ is in an infinite component of $A_{1,\lambda}$ with
positive probability.
Hence, to bound the probability that a point $(0,y)$ belongs to an infinite component is greater than 0, it suffices
to show that the box wherein it is located belongs to an infinite component in $\Rscr$ with positive probability, provided 
that $\lambda$ is large enough. Once we have shown this, the lemma will follow from Theorem~\ref{lem:perc01}.  

By (\ref{eq:RijExpect}),  for any $i\geq 0$ and $j \in \mathbb{Z}$, the expected number of points of $\Pcal_{1,\lambda}$ in $R_{i,j}$ is 
\begin{equation*} 
\Ee |R_{i,j}\cap\Pcal_{1,\lambda}| = \lambda 2^{i-1} \left( 2^{-i} - 2^{-i-1} \right) = \frac{\lambda}{4}. 
\end{equation*}

\noindent
Therefore, the probability that the rectangle is active is $p_{act}:=1-e^{-\frac{\lambda}{4}}$. We set $q_{act} := 1 - p_{act} = 
e^{-\frac{\lambda}{4}}$.

Let $\overline{\Rscr}$ (in some sense the complement of $\Rscr$) denote the graph whose vertex set is the set of 
\emph{inactive} rectangles where any two such rectangles are adjacent in this graph if they touch (either along a side or just in a 
a corner). 
Observe that if $R_{0,0}$ is not in an infinite component of $\Rscr$ then either 
$R_{0,0}$ is not active, or there is a path in $\overline{\Rscr}$ between a rectangle $R_{0,j^-}$ and a rectangle $R_{0,j^+}$ with 
$j^- < 0 < j^+$. This path must pass through some rectangle $R_{i,0}$ with $i > 0$.
Simplifying matters even further, we can say that if $R_{0,0}$ is not in an infinite component of $\Rscr$ then either 
$R_{0,0}$ is not active or, for some $i>0$ there is a path of length $i$ starting in $R_{i,0}$.
Since each rectangle touches at most 8 other rectangles, we see that 

\begin{equation}\label{eq:qact} 
\Pee( \text{component of $R_{0,0}$ is finite} ) 
 \leq q_{act} + \sum_{i=1}^\infty (7q_{act})^i = q_{act} \cdot \left(1 + \frac{7}{1-7q_{act}}\right). 
 \end{equation}

 \noindent
It is clear that by choosing $\lambda$ sufficiently large, we can make $q_{act}$ as small as we like.
And, it is also clear that for $q_{act}$ sufficiently small, the right hand side of~\eqref{eq:qact} is smaller than 1.
Thus, for sufficiently large $\lambda$, the probability that there is an infinite component in $\Gamma_{1,\lambda}$ is positive
(and hence equals one).
\end{proof}

\noindent
At this point, we can deduce the existence of a critical $\lambda$, when $\alpha =1$.
\begin{lemma}\label{lem:lambdacexists}
There exists a $\lambdac > 0$ such that

\[ \Pee_{1,\lambda}(\text{percolation} ) = \left\{\begin{array}{cl}
                                                   0 & \text{ if $\lambda < \lambdac$, } \\
                                                   1 & \text{ if $\lambda > \lambdac$. }
                                                  \end{array} \right. \]
\end{lemma}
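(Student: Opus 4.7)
The plan is to define
\[
 \lambdac := \inf\{ \lambda > 0 \ : \ \Pee_{1,\lambda}(\text{percolation}) = 1 \},
\]
and then argue that this $\lambdac$ has the required properties. The proof is essentially a standard monotonicity-plus-zero-one argument, so I expect no real obstacle -- all the substantive work has already been done in Lemmas~\ref{lem:perc0} and~\ref{lem:perc1}.

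First I would check that $0 < \lambdac < \infty$. Finiteness follows from Lemma~\ref{lem:perc1}: the set whose infimum we take contains every $\lambda > \lambda^+$, hence is nonempty. Strict positivity follows from Lemma~\ref{lem:perc0}: for $\lambda < \lambda^-$ we have $\Pee_{1,\lambda}(\text{percolation}) = 0 \neq 1$, so no such $\lambda$ lies in the set, giving $\lambdac \geq \lambda^- > 0$.

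Next I would handle the two regimes. For $\lambda > \lambdac$, the definition of the infimum gives some $\lambda' \in (\lambdac, \lambda)$ with $\Pee_{1,\lambda'}(\text{percolation}) = 1$. Invoking Lemma~\ref{lem:couplingPoisson} with $\alpha = \alpha' = 1$, we obtain a coupling in which $\Pcal_{1,\lambda'} \subseteq \Pcal_{1,\lambda}$ almost surely; consequently $\Gamma_{1,\lambda'}$ is a subgraph of $\Gamma_{1,\lambda}$, so any infinite component of the former is contained in an infinite component of the latter. Since the former has an infinite component a.s., so does the latter, i.e. $\Pee_{1,\lambda}(\text{percolation}) = 1$. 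For $\lambda < \lambdac$, the definition of the infimum directly gives $\Pee_{1,\lambda}(\text{percolation}) \neq 1$; combined with the zero-one law of Theorem~\ref{lem:perc01}, this forces $\Pee_{1,\lambda}(\text{percolation}) = 0$.

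The only subtlety worth flagging is the role of the zero-one law: without it, the monotone coupling would only let us conclude that $\lambda \mapsto \Pee_{1,\lambda}(\text{percolation})$ is nondecreasing, leaving open the possibility of intermediate probabilities. Theorem~\ref{lem:perc01} rules this out and makes $\lambdac$ a true threshold rather than merely a supremum of the ``no-percolation'' regime.
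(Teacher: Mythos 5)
Your proposal is correct and follows exactly the same route as the paper's proof, which simply cites Theorem~\ref{lem:perc01} and Lemmas~\ref{lem:perc0}, \ref{lem:perc1}, \ref{lem:couplingPoisson} and declares the result immediate. You have merely spelled out the (short) details that the paper leaves to the reader: defining $\lambdac$ as the infimum, verifying $0 < \lambdac < \infty$ via Lemmas~\ref{lem:perc0} and~\ref{lem:perc1}, using the monotone coupling above $\lambdac$, and invoking the zero-one law below it.
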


\begin{proof}
 This follows immediately from Theorem~\ref{lem:perc01} and Lemmas~\ref{lem:perc0},~\ref{lem:perc1} together
with the observation that $\Pee_{1,\lambda}(\text{percolation})$ is nondecreasing in $\lambda$ by Lemma~\ref{lem:couplingPoisson}.
\end{proof}

\noindent
The next lemma shows that percolation occurs always when $\alpha < 1$, 
independently of the value of $\lambda$. The proof is an easy adaptation of the proof of Lemma~\ref{lem:perc1}.

\begin{lemma}\label{lem:perc3}
For every $\alpha < 1$ and $\lambda > 0$ we have
$\Pee_{\alpha,\lambda}(\text{percolation} ) = 1$.
\end{lemma}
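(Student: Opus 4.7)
The plan is to adapt the rectangle discretization of Lemma~\ref{lem:perc1}, exploiting the fact that when $\alpha<1$ the expected count from~\eqref{eq:RijExpect} simplifies to $\Ee|R_{i,0}\cap\Pcal_{\alpha,\lambda}| = \frac{\lambda(1-2^{-\alpha})}{2\alpha}\cdot 2^{(1-\alpha)i}$, which tends to $\infty$ as $i\to\infty$ regardless of how small $\lambda$ is. So even if $\lambda$ is tiny and low rectangles are typically empty, rectangles high up are almost certain to be non-empty.

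I would focus on the single vertical column $\{R_{i,0}:i\ge 0\}$. Because $\lfloor 0/2\rfloor=0$, the rectangle $R_{i+1,0}$ is the (unique) parent of $R_{i,0}$ in the dyadic structure, and Lemma~\ref{eq:Rij} then guarantees that any point of $\Pcal_{\alpha,\lambda}$ in $R_{i,0}$ is joined in $\Gamma_{\alpha,\lambda}$ to any point of $\Pcal_{\alpha,\lambda}$ in $R_{i+1,0}$. Thus, if every $R_{i,0}$ contains at least one point of $\Pcal_{\alpha,\lambda}$ (call this event the box being \emph{active}), then picking one point per box produces an infinite path in $\Gamma_{\alpha,\lambda}$, and percolation occurs.

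It then remains to check that this column-event has positive probability. Since the $R_{i,0}$ are pairwise disjoint, the activity events are independent, and the failure probability $q_i := \exp(-\tfrac{\lambda(1-2^{-\alpha})}{2\alpha}\cdot 2^{(1-\alpha)i})$ decays doubly exponentially in $i$. Hence every $q_i<1$ and $\sum_{i\ge 0}q_i<\infty$, so the infinite product $\prod_{i\ge 0}(1-q_i)$ is strictly positive. This gives $\Pee_{\alpha,\lambda}(\text{percolation})>0$, which Theorem~\ref{lem:perc01} immediately upgrades to probability one. There is no real obstacle: the crucial geometric input is already isolated in Lemma~\ref{eq:Rij}, and for $\alpha<1$ the rapid decay of $q_i$ along a single dyadic column removes any need for the Peierls-style contour argument that was required in the $\alpha=1$ case.
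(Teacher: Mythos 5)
Your proof is correct, and it takes a genuinely different and simpler route than the paper. The paper applies the same Peierls-type contour argument used for Lemma~\ref{lem:perc1}: it picks $i_0$ large enough that $q_{act}<\eps$ uniformly above height $i_0$, and bounds the probability that $R_{i_0,0}$ lies in a finite component of $\Rscr$ by the convergent sum $\eps\bigl(1+\frac{7}{1-7\eps}\bigr)<1$. Your argument dispenses with contours entirely: you observe that the failure probabilities $q_i=\exp\bigl(-\tfrac{\lambda(1-2^{-\alpha})}{2\alpha}2^{(1-\alpha)i}\bigr)$ along the single column $\{R_{i,0}\}_{i\ge 0}$ decay doubly exponentially, that the $R_{i,0}$ are pairwise disjoint (so activity events are independent), and that $R_{i,0}$ and $R_{i+1,0}$ are linked by Lemma~\ref{eq:Rij}, so $\prod_{i\ge 0}(1-q_i)>0$ already gives an infinite path with positive probability, which Theorem~\ref{lem:perc01} upgrades to probability one. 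The trade-off: your column argument is shorter and conceptually transparent, but it relies essentially on the unbounded growth of $\Ee|R_{i,0}\cap\Pcal_{\alpha,\lambda}|$, which fails at $\alpha=1$ where every box has the same expected count $\lambda/4$; the paper's Peierls argument is a touch heavier but reuses the $\alpha=1$ machinery verbatim. Both correctly reduce to the same geometric input, Lemma~\ref{eq:Rij}, and the same 0-1 law.
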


\begin{proof}
We again consider the graph $\Rscr$ defined in the proof of Lemma~\ref{lem:perc1}.
Using (\ref{eq:RijExpect}), the expected number of points of $\Pcal_{\alpha, \lambda}$ that fall inside the rectangle
$R_{i,j}$ is

\begin{equation}\label{eq:mijdef} 
\mathbb{E} |R_{i,j} \cap \Pcal_{\alpha, \lambda}| = \frac{\lambda}{2\alpha}(1-2^{-\alpha})2^{i(1-\alpha)}.
\end{equation}

\noindent
Note that this does not depend on $j$ and tends to infinity with $i$.
Thus, for every $\eps > 0$ there is an $i_0 = i_0(\eps,\lambda,\alpha)$ such that 
$\Pee( R_{i,j} \cap \Pcal_{\alpha,\lambda} = \emptyset ) =  e^{-\mathbb{E} |R_{i,j} \cap \Pcal_{\alpha}|} < \eps$ for all $i \geq i_0$ and 
all $j \in \Zed$.

We fix a sufficiently small $\eps$, to be made precise later, and we consider the corresponding $i_0$.
Arguing as in the proof of Lemma~\ref{lem:perc1}, we observe that if $R_{i_0,0}$ is not in an infinite component of $\Rscr$ then either
$R_{i_0,0}$ is not active or for some $j > 0$ there is a path of length $j$ in $\overline{\Rscr}$ starting in $R_{i_0+j,0}$.
Analogously to~\eqref{eq:qact} we find

\[ \Pee( \text{$R_{i_0,0}$ in a finite component} ) \leq \eps \left( 1 + \frac{7}{1-7\eps}\right) < 1, \]

\noindent
where the last inequality holds provided $\eps$ was chosen sufficiently small.
This proves the lemma.
\end{proof}

%

Having identified the range of $\alpha$ and $\lambda$ where percolation occurs, we proceed with showing that if there is an 
infinite component, then it is a.s. unique. 

\begin{lemma}\label{lem:therecanbeonlyone}
For every $0 < \alpha \leq 1$ and $\lambda > 0$, almost surely, there is at most one infinite component.
\end{lemma}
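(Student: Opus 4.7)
The plan is to follow the classical three-step uniqueness proof from continuum percolation theory, based on ergodicity of horizontal translations and insertion tolerance of Poisson point processes.

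First, since the intensity $f_{\alpha,\lambda}(x,y)=\lambda e^{-\alpha y}$ depends only on $y$, the law of $\Pcal_{\alpha,\lambda}$ is invariant under horizontal translations $T_a:(x,y)\mapsto(x+a,y)$, and this action is mixing (hence ergodic) on the underlying probability space. The number $N$ of infinite components of $\Gamma_{\alpha,\lambda}$ is translation invariant, so by ergodicity $N$ is almost surely equal to some deterministic constant $k\in\{0,1,2,\ldots\}\cup\{\infty\}$.

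To rule out finite $k\geq 2$, I would use insertion tolerance. Suppose for contradiction $N=k$ a.s.\ with $2\leq k<\infty$. Choose a box $B=[-L,L]\times[0,L]$ large enough that with probability $>1/2$ each of the $k$ infinite components contains at least one point in $B$. Condition on the configuration outside $B$; by the independence property of Poisson processes, the restriction of $\Pcal_{\alpha,\lambda}$ to $B$ is still a Poisson process of intensity $f_{\alpha,\lambda}$, independent of the outside. With positive probability, this restricted process places a point near $(0,Y)$ for some sufficiently large $Y=Y(L)$; by the hyperbolic-style rule, such a high-altitude point is adjacent in $\Gamma$ to every point in $B$ (and to all external points within the huge ball $\Bcal((0,Y))$), and hence merges all $k$ infinite components into one. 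This contradicts $N=k$ a.s.

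To rule out $N=\infty$, I would employ the Burton--Keane trifurcation argument. Call $p\in\Pcal_{\alpha,\lambda}$ a \emph{trifurcation} if $p$ lies in an infinite component $C$ such that $C\setminus\{p\}$ (with incident edges of $p$ removed) splits into at least three infinite subcomponents. If $N=\infty$ a.s., then by applying insertion tolerance to three distinct infinite clusters coming close to a chosen location, trifurcations can be constructed with positive probability, and by horizontal translation invariance they occur with positive density at every height $y$. Hence the expected number of trifurcations in $B=[-L,L]\times[0,H]$ grows linearly in $L$ for fixed $H$. On the other hand, the standard Burton--Keane combinatorial bound shows that the number of trifurcations in $B$ is at most a constant times the number of edges of $\Gamma_{\alpha,\lambda}$ crossing $\partial B$. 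Since edges between points of height $\leq H$ have $x$-length at most $e^H$, and the intensity decays exponentially in $y$, the expected number of such boundary-crossing edges stays bounded as $L\to\infty$ for fixed $H$. Letting $L\to\infty$ yields the contradiction.

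The main obstacle is Step~3: the classical Burton--Keane boundary argument, usually stated for lattice or short-range models, has to be adapted to our anisotropic, long-range setting, in which edges can span arbitrarily large distances at high $y$. The key technical point is to verify that the ``effective boundary'' of $[-L,L]\times[0,H]$ contributes only $O(1)$ edge crossings as $L\to\infty$ with $H$ fixed, using the exponential decay $e^{-\alpha y}$ of the Poisson intensity to control the (rare) points at large $y$ that have very wide balls.
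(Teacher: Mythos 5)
Your proposal follows the classical ergodicity / insertion-tolerance / Burton--Keane route, which is genuinely different from what the paper does. The paper instead gives a planar ``barrier'' argument: it exploits Lemma~\ref{lem:cross} (crossing edges, or a point lying above an edge, force an extra adjacency) to build a short path $u_0u_1v_1v_0$ with $u_0,u_1,v_1,v_0$ taken from the active boxes $R_{i,-2},R_{i,-1},R_{i,1},R_{i,2}$ at a sufficiently high level $i$; this path, together with two vertical rays and the $x$-axis, encloses both target boxes, so any infinite path leaving the enclosed region must, by Lemma~\ref{lem:cross}, lie in the same component as $u_0u_1v_1v_0$. No ergodicity argument and no trifurcation count are needed.

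Steps 1 and 2 of your sketch (a.s.\ constancy of the number of infinite clusters, and insertion tolerance to rule out $2\le k<\infty$) can be carried out with care. The real gap is in Step 3. Your key claim --- ``the expected number of boundary-crossing edges stays bounded as $L\to\infty$ for fixed $H$'' --- is false. The top edge $[-L,L]\times\{H\}$ already defeats it: the expected number of points of $\Pcal_{\alpha,\lambda}$ in $[-L,L]\times(H,\infty)$ is $2L\lambda e^{-\alpha H}/\alpha=\Theta(L)$, and essentially every such point has a neighbour inside the box, since a point $(x,y)$ with $y>H$ is adjacent to every $(x',y')$ with $|x-x'|<e^{(y+y')/2}\ge e^{H/2}$, so its expected number of low-height neighbours is of order $e^{H/2}\gg 1$. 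Hence the number of edges crossing $\partial B$ is $\Theta(L)$, the same order as the trifurcation count, and the Burton--Keane comparison does not yield a contradiction. Letting $H$ grow with $L$ does not obviously help: the expected number of edges crossing the vertical side $x=L$ between points of height at most $H$ is of order $\bigl(\int_0^H e^{(1-\alpha)y}{\dd}y\bigr)^2$, which diverges as $H\to\infty$ for every $\alpha\le 1$. Adapting Burton--Keane to this anisotropic long-range setting would require a substantially more delicate choice of box and counting than the sketch supplies; the paper's geometric barrier argument avoids the issue entirely.
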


\begin{proof} 
We consider the dissection of $\eR \times [0,\infty)$ that was defined in~\eqref{def:Rij}. 
Consider the boxes $R_{i_1,j_1}$ and $R_{i_2,j_2}$. Let $i_0$ be the smallest $i$ such that 
$2^{i-1} > |j_1| 2^{i_1-1}, |j_2|2^{i_2-1}$. 
In other words, this is the smallest $i$ for which $R_{i,-1}$ and $R_{i,1}$ both lie above the two boxes. 
Note that $i_0 > \max \{ i_1,i_2 \}$.

As we have seen in the proof of Lemma~\ref{lem:perc3}, cf.~\eqref{eq:mijdef}, each box is active (that is, it contains at least 
one vertex) with probability at least $1 - e^{-\frac{\lambda}{2\alpha}(1-2^{-\alpha})} =: p>0$. 
So for any $i >0$, the rectangles $R_{i,-2}, R_{i,-1}, R_{i,1},R_{i,2}$ are all active with 
probability at least $p^4$. Let $E_i$ denote this event. 
If $E_i$ is realised, then there are vertices 
$u_0 \in R_{i,-2}, u_1 \in R_{i,-1}, v_1 \in R_{i,1}$ and $v_0 \in R_{i,2}$ and by the definition of the boxes these 
four vertices form the path $u_0u_1v_1v_0$. 

Now, if $i\geq i_0$, then these vertices lie above the boxes $R_{i_1,j_1}, R_{i_2,j_2}$. Furthermore, with probability 1 there is 
an $i\geq i_0$ such that $E_i$ is realised. 

\begin{figure}[ht] 
\begin{center}
\includegraphics[scale=0.5]{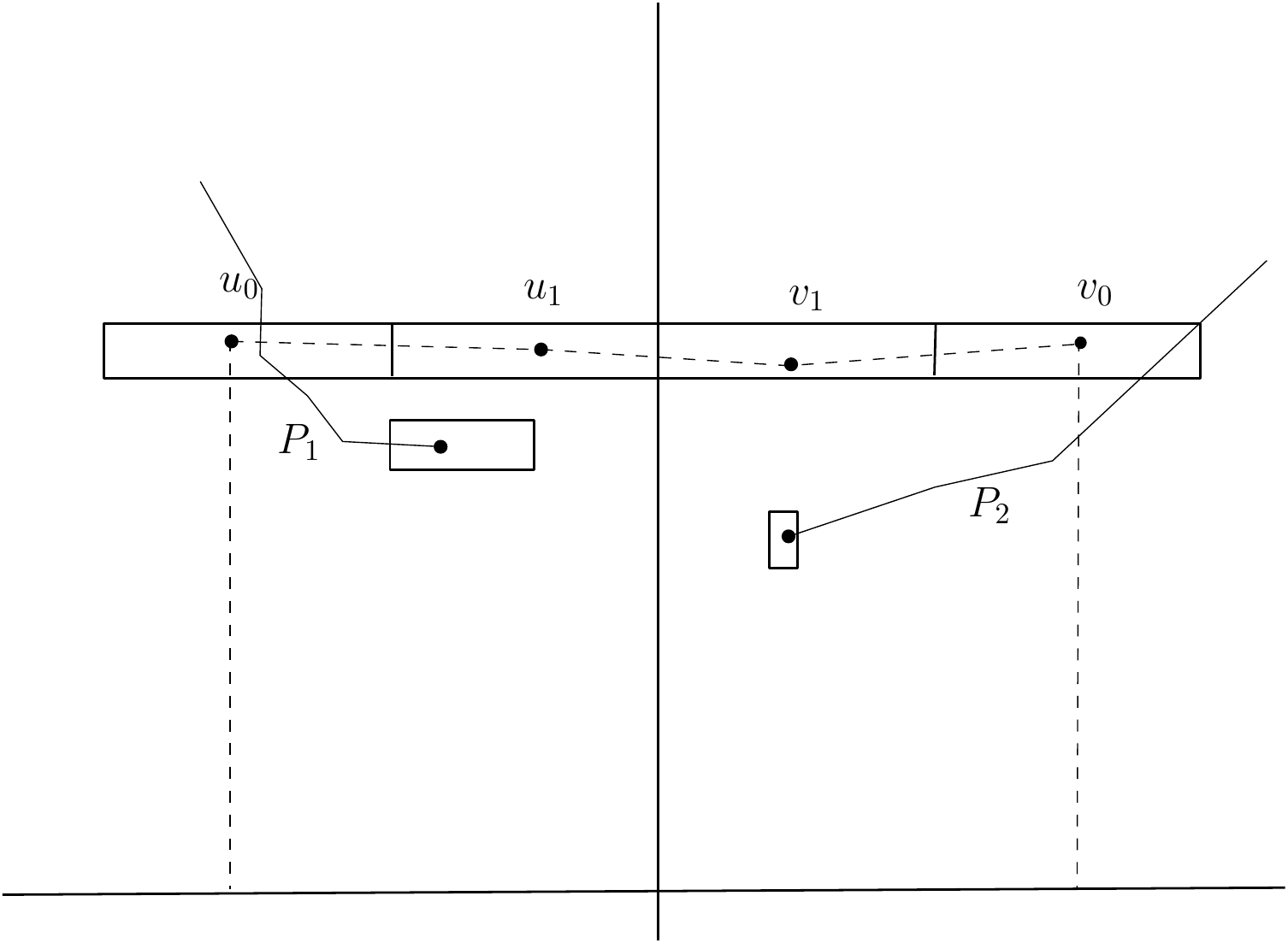}
\end{center}
\caption{The event $R_i$.\label{fig:Unique}} 
\end{figure} 

We claim that if an infinite path $P_1$ visits box $R_{i_1,j_1}$ and an infinite path $P_2$ visits box $R_{i_2,j_2}$, then 
they belong to same connected component. Indeed, consider the region defined by the segments 
$[u_0,u_1], [u_1,v_1], [v_1,v_0]$, the lines $x=x(u_0), x=x(v_0)$ and the $x$-axis - let $\mathcal{R}$ denote it. 
(See Figure~\ref{fig:Unique} for a depiction.)
Let $P_1$ be an infinite path that visits box $R_{i_1,j_1}$, and consider an edge $e$ of $P_1$ 
that has one end inside $\mathcal{R}$ and one point outside. 
Observe that one of the following three options has to be the case : {\bf 1)} $e$ crosses at least one of the edges $u_0u_1, u_1v_1, v_1v_0$, or 
{\bf 2)} $e$ crosses the line $x=x(u_0)$ below $u_0$, or {\bf 3)} $e$ crosses the line $x=x(v_0)$ below $v_0$.
In all three cases Lemma~\ref{lem:cross} implies that $P_1$ is in the same component as the path $u_0u_1v_1v_0$. 
Completely analogously, $P_2$ is also in the same component as the path $u_0u_1v_1v_0$.

Thereby, we can conclude that $P_1$ and $P_2$ belong to the same component. 
\end{proof}

%
%
%
%

\section{The function $c (\alpha, \nu)$}
The function $c (\alpha,\nu )$ which appears in Theorem~\ref{thm:main} is defined as follows. 
\begin{equation}\label{eq:cdef} 
 c(\alpha, \nu) := \left\{\begin{array}{cl} 
 \int_0^\infty \theta\left(y;\alpha,\nu\alpha/\pi\right) \alpha e^{-\alpha y} {\dd}y & \text{ if } \alpha \leq 1, \\
  0 & \text{ if } \alpha > 1.
                         \end{array} \right.
\end{equation}
Property~\ref{itm:main.i} holds by definition of $c$. Property~\ref{itm:main.ii} follows immediately from Lemmas~\ref{lem:perc3}
and~\ref{lem:linkthetaperc}.
Property~\ref{itm:main.iii} follows immediately from Lemmas~\ref{lem:lambdacexists} and~\ref{lem:linkthetaperc}, where of course
$\nuc = \pi \lambdac$.
Property~\ref{itm:main.iv} follows immediately from Lemma~\ref{lem:thetaalphaless12}.

To deduce parts {\bf (vi)} and {\bf (v)} of Theorem~\ref{thm:main}, we will first deduce the monotonicity and continuity properties of 
$\theta (y; \alpha, \lambda)$. We will begin with the former, as they are slightly easier than the latter. 

\subsection{The monotonicity of $\theta (y;\alpha, \lambda)$} 
Property~\ref{itm:main.vi} follows immediately from the next lemma.

\begin{lemma}\label{lem:thetadecralpha} We have
\begin{enumerate}
\item\label{itm:thetadecralpha} If $\frac12<\alpha<1$ and $\alpha' < \alpha$ then $\theta(y;\alpha,\lambda) < \theta(y;\alpha',\lambda)$ for all $y,\lambda$.
\item\label{itm:thetaincrlambda} If $\frac12<\alpha<1$ and $\lambda > 0$ or if $\alpha=1$ and $\lambda > \lambdac$ then
$\theta(y;\alpha,\lambda) < \theta(y;\alpha,\lambda')$ for all $y$ and all $\lambda'>\lambda$.
\end{enumerate}
\end{lemma}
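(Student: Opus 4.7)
The non-strict inequalities $\theta(y;\alpha,\lambda)\le\theta(y;\alpha',\lambda)$ and $\theta(y;\alpha,\lambda)\le\theta(y;\alpha,\lambda')$ are immediate from the coupling of Lemma~\ref{lem:couplingPoisson}: the smaller graph is a subgraph of the larger, so any infinite component of $\Gamma[\{(0,y)\}\cup\Pcal_{\mathrm{small}}]$ containing $(0,y)$ extends to an infinite component of $\Gamma[\{(0,y)\}\cup\Pcal_{\mathrm{big}}]$. In part~\ref{itm:thetadecralpha} the subcase $\alpha'\le\tfrac12$ follows at once from Lemma~\ref{lem:thetaalphaless12}, since then $\theta(y;\alpha',\lambda)=1>\theta(y;\alpha,\lambda)$; in every other case, Lemmas~\ref{lem:thetaalphaless12}(ii), \ref{lem:lambdacexists} and~\ref{lem:perc3} together guarantee $0<\theta<1$ for both parameter pairs. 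The task is therefore to upgrade to strict inequality in this genuinely supercritical regime.

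The plan is a bridging argument. Using Lemma~\ref{lem:superpos} together with the coupling, realise $\Pcal_{\mathrm{big}}=\Pcal_{\mathrm{small}}\sqcup\Pcal_{\mathrm{ext}}$, where $\Pcal_{\mathrm{ext}}$ is an independent Poisson process with strictly positive intensity on $\eR\times(0,\infty)$ (equal to $\lambda(e^{-\alpha'y}-e^{-\alpha y})$ in part~\ref{itm:thetadecralpha} and $(\lambda'-\lambda)e^{-\alpha y}$ in part~\ref{itm:thetaincrlambda}). It suffices to exhibit a positive-probability event on which $(0,y)$ lies in a finite component of $\Gamma[\{(0,y)\}\cup\Pcal_{\mathrm{small}}]$ but in an infinite component of $\Gamma[\{(0,y)\}\cup\Pcal_{\mathrm{big}}]$, as this directly witnesses the strict gap $\theta(y;\mathrm{big})>\theta(y;\mathrm{small})$.

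Fix a small open box $Q\subseteq\Bcal((0,y))$, so that every point of $Q$ is a neighbour of $(0,y)$, together with a reference point $p^*\in Q$. Consider the events $E_1=\{\Pcal_{\mathrm{small}}\cap\Bcal((0,y))=\emptyset\}$, $E_2=\{\Pcal_{\mathrm{ext}}\cap Q\neq\emptyset\}$, and $E_3=\{p^*\text{ lies in an infinite component of }\Gamma[\{p^*\}\cup(\Pcal_{\mathrm{big}}\cap\Bcal((0,y))^c)]\}$. These depend on Poisson points in three pairwise disjoint regions, namely $\Pcal_{\mathrm{small}}$ inside $\Bcal((0,y))$, $\Pcal_{\mathrm{ext}}$ inside $\Bcal((0,y))$, and the big process outside $\Bcal((0,y))$, and are therefore mutually independent. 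Here $\Pee(E_1)>0$ by the finite expected-degree computation in the proof of Lemma~\ref{lem:thetaalphaless12}(ii) (which only uses $\alpha>\tfrac12$) and $\Pee(E_2)>0$ is trivial. Shrinking $Q$ if necessary, an adjacency-perturbation argument (based on the strictness of $|x_i-x_j|<e^{(y_i+y_j)/2}$) shows that on $E_1\cap E_2\cap E_3$ the random point in $\Pcal_{\mathrm{ext}}\cap Q$ inherits the infinite cluster of $p^*$ in the restricted graph; since that point is a neighbour of $(0,y)$, this places $(0,y)$ in an infinite cluster of the big graph, while $E_1$ forces $(0,y)$ isolated, hence finite, in the small graph.

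The remaining and principal difficulty is $\Pee(E_3)>0$: we must show that the Poisson process whose intensity has been set to zero on the cone $\Bcal((0,y))$ still percolates through a fixed reference point. The natural approach is to re-run the renormalisation proofs of Lemmas~\ref{lem:perc1} and~\ref{lem:perc3} on the subfamily of dissection rectangles $R_{i,j}$ that lie entirely in $\Bcal((0,y))^c$. A direct computation shows that at height level $i$ only $O(e^{y/2}\cdot 2^{-i/2})$ of the rectangles meet $\Bcal((0,y))$, so for every sufficiently large $i$ every box with $j$ beyond some threshold $j_0(i)$ is disjoint from the cone. The active-rectangle arguments applied to this "right quarter-plane" of boxes produce an infinite chain of adjacent active rectangles with positive probability, and Lemma~\ref{eq:Rij} converts this chain into a path in $\Gamma[\{p^*\}\cup(\Pcal_{\mathrm{big}}\cap\Bcal((0,y))^c)]$ provided $p^*$ is placed in the first rectangle of the chain.
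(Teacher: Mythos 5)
Your argument and the paper's both rest on the same superposition decomposition (via Lemma~\ref{lem:superpos}), and both look for a positive-probability "gap" event on which $(0,y)$ is in a finite component of the small graph but an infinite component of the big graph. However, the way you construct the gap event is genuinely different, and it is substantially harder to make rigorous.

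The paper's trick is to use the \emph{full} percolation of the small process $\Pcal_{\alpha,\lambda}$ (Lemma~\ref{lem:perc3}), locate a vertex of its infinite cluster in a deterministic box $[-h,h]\times[0,h]$ with positive probability on the event $E^c$ (where $E$ is the event that $(0,y)$ already percolates), and then bridge with a single point of the extra process $\Pcal'$ placed at \emph{high} $y$-coordinate (namely in $[0,1]\times[2\log(h+1),\infty)$). Since $e^{\frac12(y'+y'')}\geq h+1$ whenever $y'\geq 2\log(h+1)$, such a point is automatically adjacent to $(0,y)$ \emph{and} to everything in the box; no geometric perturbation argument is needed, and no restricted percolation problem arises. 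Independence is free because $\Pcal'$ is an independent process. Your construction instead forces $(0,y)$ to be isolated in the small graph via $E_1$, drops a point of $\Pcal_{\mathrm{ext}}$ next to $(0,y)$ via $E_2$, and tries to independently percolate in the region $\Bcal((0,y))^c$ via $E_3$. You pay for the region-disjointness in two places where the write-up is genuinely incomplete.

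First, and most seriously, you do not actually prove $\Pee(E_3)>0$. The cone $\Bcal((0,y))$ removes boxes at \emph{every} height level $i$, not just finitely many, so the Peierls contour bound in the proofs of Lemmas~\ref{lem:perc1} and~\ref{lem:perc3} does not carry over verbatim: paths of boxes entering the removed wedge have probability~$1$ of being "inactive" in the restricted process, so the geometric-series bound $\sum_i (7q_{\mathrm{act}})^i$ fails as written. Your observation that at level $i$ only $O(e^{y/2}2^{-i/2})$ boxes meet the cone is correct and gives hope that a modified argument works, but the bookkeeping for the contour estimate is nontrivial and is the main content of the lemma in this approach; merely invoking the "active-rectangle arguments" is not enough. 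You would also need $p^*\in Q\subseteq\Bcal((0,y))$ to have a neighbouring box that lies \emph{entirely} outside the cone, which constrains the placement of $Q$ (e.g.\ $Q$ must be near the boundary of $\Bcal((0,y))$, not near $(0,y)$ itself): if $p^*$ sits directly below $(0,y)$ with $y(p^*)<y$, one has $\Bcal(p^*)\subseteq\Bcal((0,y))$ and $p^*$ has no neighbours at all in the restricted process.

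Second, the "adjacency-perturbation argument" that transfers the infinite cluster from the fixed reference point $p^*$ to the random point of $\Pcal_{\mathrm{ext}}\cap Q$ is circular as stated: $Q$ must be fixed before the process is revealed, while the amount of slack in the strict inequality $|x_i-x_j|<e^{\frac12(y_i+y_j)}$ for $p^*$'s bridging edge is random. The clean fix, consistent with the tools already in the paper, is to avoid $p^*$ entirely and phrase $E_3$ in terms of the box dissection: require that some box $R$ adjacent to (the box containing) $Q$, with $R\subseteq\Bcal((0,y))^c$, is active and lies in an infinite component of the restricted box-graph $\Rscr$; then Lemma~\ref{eq:Rij} makes \emph{every} point of $Q$ adjacent to every point of $R$, so the transfer is automatic. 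With that fix, and with a completed contour estimate for the cone-deleted dissection, your route would go through; but as written both steps are gaps, and the paper's high-$y$ bridging avoids them entirely.
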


\begin{proof}
We start with~\ref{itm:thetadecralpha}.
Using lemma~\ref{lem:superpos}, we write $\Pcal_{\alpha',\lambda}$ as the union of
$\Pcal_{\alpha,\lambda}$ and an independent Poisson point process $\Pcal'$ with intensity $g := f_{\alpha',\lambda}-f_{\alpha,\lambda}$.

Let $E$ denote the event that $(0,y)$ is in an infinite component of $\Gamma[ \{(0,y)\} \cup \Pcal_{\alpha,\lambda} ]$.
By Lemma~\ref{lem:thetaalphaless12} we have $\Pee(E) < 1$. 

If for $h>0$ we denote by $F_h$ the event that the box $[-h,h]\times [0,h]$ contains a vertex in an infinite component of 
$\Gamma_{\alpha,\lambda}$, we then have  $\Pee(\cup_{h>0} F_h) = 1$ by Lemma~\ref{lem:perc3}.  
Hence, there is an $h > y$ such that $\Pee( E^c \cap F_h )>0$. 

Let $G_h$ denote the event that there is a point of $\Pcal'$ inside $[0,1]\times[2\log(h+1), \infty)$.
Observe that $G_h$ is independent of $E, F_h$ and that $\Pee( G_h ) > 0$ as $g$ is positive on $[0,1]\times[2\log(h+1), \infty)$.
It follows that 

\[ \Pee( E^c \cap F_h \cap G_h ) > 0. \]

\noindent
Next, observe that if $F_h \cap G_h$ holds, then $(0,y)$ is in an infinite component of 
$\Gamma[ \{(0,y)\} \cup \Pcal_{\alpha',\lambda} ]$.
This gives

\[ \theta( y; \alpha',\lambda ) \geq \Pee( E ) + \Pee( E^c \cap F_h \cap G_h ) > \Pee(E) = \theta( y; \alpha,\lambda ), \]

\noindent
which proves part~\ref{itm:thetadecralpha}.
The proof of~\ref{itm:thetaincrlambda} is completely analogous.
\end{proof}

\subsection{The (dis-)continuity of $\theta (y; \alpha, \lambda)$} 

In this sub-section, we give a collection of results towards the proof of Theorem~\ref{thm:main} (v). 
We will show that the percolation probability $\theta$ is continuous with respect to $\alpha, \lambda, y$
for many choices of these parameters.  
We begin with continuity in $y$.

\begin{lemma}\label{lem:thetactsy}
$\theta(y;\alpha, \lambda)$ is continuous in $y$.
\end{lemma}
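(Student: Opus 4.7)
The plan is to couple the percolation models for different heights by fixing the Poisson process $\Pcal_{\alpha,\lambda}$ and then show that $\theta(y;\alpha,\lambda)$ is the probability of a monotone event in $y$ whose increments can be controlled by the expected Poisson mass of an explicit annular region that shrinks to zero as $y_1\to y_0$. The case $\alpha\leq\tfrac12$ requires nothing: Lemma~\ref{lem:thetaalphaless12} already gives $\theta(y;\alpha,\lambda)\equiv 1$. So I will focus on $\alpha>\tfrac12$.

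Under the coupling, let $A_y$ denote the event that $(0,y)$ lies in an infinite component of $\Gamma[\{(0,y)\}\cup\Pcal_{\alpha,\lambda}]$. Since $\Bcal((0,y))=\{(x',y'):y'>0,\,|x'|<e^{(y+y')/2}\}$ is strictly increasing in $y$, every neighbor of $(0,y_0)$ is a neighbor of $(0,y_1)$ whenever $y_0\leq y_1$, and the edges within $\Pcal_{\alpha,\lambda}$ are unchanged, so $A_y$ is monotone non-decreasing in $y$. The key step is the observation that if $y_0<y_1$ and $A_{y_1}\setminus A_{y_0}$ occurs, then necessarily $\Pcal_{\alpha,\lambda}\cap(\Bcal((0,y_1))\setminus\Bcal((0,y_0)))\neq\emptyset$: were this intersection empty, the neighbor sets of $(0,y_0)$ and $(0,y_1)$ in $\Pcal_{\alpha,\lambda}$ would coincide, and since the component of the special vertex is just the union of that vertex with the $\Gamma[\Pcal_{\alpha,\lambda}]$-components of its neighbors, the two components would have identical cardinality, contradicting $A_{y_0}^c\cap A_{y_1}$. (Note that this argument does not require $(0,y)$ to have finite degree.)

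From here the proof concludes by a short estimate: Markov's inequality gives
\[
0\;\leq\;\theta(y_1;\alpha,\lambda)-\theta(y_0;\alpha,\lambda)\;\leq\;\Ee\bigl|\Pcal_{\alpha,\lambda}\cap(\Bcal((0,y_1))\setminus\Bcal((0,y_0)))\bigr|\;=\;2\lambda\bigl(e^{y_1/2}-e^{y_0/2}\bigr)\int_0^\infty e^{(\frac12-\alpha)y'}\,\dd y',
\]
and the integral is finite precisely because $\alpha>\tfrac12$, so the right-hand side tends to zero as $y_1\to y_0$. The same argument with the roles of $y_0,y_1$ reversed yields left-continuity, and together they give continuity of $\theta$ in $y$. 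The only mildly delicate ingredient is the ``identical-neighbor-set'' implication used above, but once observed it is entirely elementary; everything else is routine.
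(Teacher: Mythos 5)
Your proof is correct and follows essentially the same route as the paper's: both exploit the monotone coupling in $y$ and bound the difference of percolation probabilities by the expected Poisson mass of $\Bcal((0,y_1))\setminus\Bcal((0,y_0))$, which vanishes as $y_1\to y_0$ when $\alpha>\tfrac12$. The only (minor) difference is that you make explicit the reason a ``new neighbour'' is forced to exist on $A_{y_1}\setminus A_{y_0}$ — that the component of the extra vertex is the union of the $\Gamma[\Pcal_{\alpha,\lambda}]$-components of its neighbours — a step the paper leaves implicit.
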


\begin{proof}
If $\alpha \leq \frac12$, then there is nothing to prove by Lemma~\ref{lem:thetaalphaless12}. 
Let us thus suppose that $\alpha > \frac12$, and let $y > y' > 0$ be arbitrary. 
Since every point of $\Pcal_{\alpha,\lambda}$ that is adjacent to $(0,y')$ will also 
be adjacent to $(0,y)$, it is clear that $\theta(y;\alpha,\lambda) \geq \theta(y';\alpha,\lambda)$.
Moreover, we have that%
\[ \theta(y;\alpha,\lambda) - \theta(y';\alpha, \lambda) \leq 
 \Pee(\text{There is a point of $\Pcal_{\alpha,\lambda}$ that is adjacent to
 $(0,y)$ but not to $(0,y')$} ) 
\]

Let $\mu$ denote the expected number of points of
$\Pcal_{\alpha,\lambda}$ that are adjacent to $(0,y)$ but not to $(0,y')$.
We have that

\[ 0 \leq \theta(y;\alpha,\lambda) - \theta(y';\alpha,\lambda) \leq \mu \]

\noindent
Let us now compute that

\[ \mu = \int_0^\infty 2\left(e^{\frac12(t+y)}-e^{\frac12(t+y')}\right) \cdot \lambda e^{-\alpha t} {\dd}t
 = 2\lambda \left(e^{y/2}-e^{y'/2}\right) / (\alpha - \frac12).
\]

\noindent
Thus $\mu$ can be made arbitrarily small by choosing $y,y'$ sufficiently close to each other.
It follows that $\theta$ is continuous in its first argument as claimed.
\end{proof}

In the next lemma, we show that the probabilities of certain events under the measure $\Pee_{\alpha, \lambda}$ are continuous with respect to these
two parameters. We will use this several times later on.
\begin{lemma}\label{lem:VerwBegrCont}
Let $E$ be an event that depends only on the points inside a measurable set $A \subseteq \eR \times [0,\infty)$, and 
suppose that $\alpha_0 > 0$ is such that $\int_A e^{-\alpha_0 y}{\dd}x{\dd}y < \infty$.
Then $(\alpha,\lambda) \mapsto \Pee_{\alpha,\lambda}(E)$ is continuous on $(\alpha_0,\infty)\times(0,\infty)$.
\end{lemma}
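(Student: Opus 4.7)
The plan is to prove continuity by showing that, under a suitable coupling, the point configurations of $\Pcal_{\alpha,\lambda}$ and $\Pcal_{\alpha',\lambda'}$ restricted to $A$ coincide with probability tending to one as $(\alpha',\lambda')\to(\alpha,\lambda)$. Since $E$ is determined by the points inside $A$, this will force $\Pee_{\alpha',\lambda'}(E)\to \Pee_{\alpha,\lambda}(E)$.

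Concretely, fix $(\alpha,\lambda)\in(\alpha_0,\infty)\times(0,\infty)$ and a sequence $(\alpha_n,\lambda_n)\to(\alpha,\lambda)$. Invoke Lemma~\ref{lem:couplingPoisson} to realise all Poisson processes on a common probability space in a monotone fashion. Set $\alpha_n^\star := \min(\alpha,\alpha_n)$ and $\lambda_n^\star := \max(\lambda,\lambda_n)$; then both $\Pcal_{\alpha,\lambda}$ and $\Pcal_{\alpha_n,\lambda_n}$ are subsets of $\Pcal_{\alpha_n^\star,\lambda_n^\star}$ almost surely. Consequently
\[
 \left| (\Pcal_{\alpha,\lambda}\triangle \Pcal_{\alpha_n,\lambda_n})\cap A \right|
  \leq \left|(\Pcal_{\alpha_n^\star,\lambda_n^\star}\setminus \Pcal_{\alpha,\lambda})\cap A\right|
  + \left|(\Pcal_{\alpha_n^\star,\lambda_n^\star}\setminus \Pcal_{\alpha_n,\lambda_n})\cap A\right|,
\]
and, by Lemma~\ref{lem:superpos}, the expectations of the two terms on the right equal
$\int_A (f_{\alpha_n^\star,\lambda_n^\star} - f_{\alpha,\lambda})\,\dd x\,\dd y$ and
$\int_A (f_{\alpha_n^\star,\lambda_n^\star} - f_{\alpha_n,\lambda_n})\,\dd x\,\dd y$ respectively.

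Eventually $\alpha_n^\star > \alpha_0$ and the $\lambda_n^\star$ are bounded, so the integrands are pointwise-convergent to zero and dominated by $M e^{-\alpha_0 y}$ for a constant $M$. By the hypothesis $\int_A e^{-\alpha_0 y}\,\dd x\,\dd y < \infty$, dominated convergence gives that both integrals tend to $0$. Hence by Markov's inequality
\[
 \Pee\bigl( (\Pcal_{\alpha,\lambda}\cap A) \neq (\Pcal_{\alpha_n,\lambda_n}\cap A) \bigr) \;\xrightarrow[n\to\infty]{}\; 0.
\]
Since the indicator of $E$ is determined by the restriction of the process to $A$, this yields
\[
 \bigl|\Pee_{\alpha,\lambda}(E) - \Pee_{\alpha_n,\lambda_n}(E)\bigr|
  \leq \Pee\bigl( (\Pcal_{\alpha,\lambda}\cap A) \neq (\Pcal_{\alpha_n,\lambda_n}\cap A) \bigr) \to 0,
\]
establishing continuity at the arbitrary point $(\alpha,\lambda)$.

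The only real subtlety is verifying that the monotone coupling of Lemma~\ref{lem:couplingPoisson} can be used with the ``sandwich'' process $\Pcal_{\alpha_n^\star,\lambda_n^\star}$ so that the symmetric-difference bound above is pointwise valid, and then ensuring the integrability side condition $\int_A e^{-\alpha_0 y}\,\dd x\,\dd y <\infty$ (which is exactly the hypothesis we are given) is strong enough to run dominated convergence uniformly over a neighbourhood of $(\alpha,\lambda)$ inside $(\alpha_0,\infty)\times(0,\infty)$; this is why the statement requires $\alpha>\alpha_0$ rather than merely $\alpha>0$.
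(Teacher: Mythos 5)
Your proposal is correct and is in essence the same approach as the paper's: couple the Poisson processes so that the difference in $A$ is a Poisson process whose mean tends to zero, then use the fact that $E$ is $A$-local. The paper reaches this via Lemma~\ref{lem:superpos} and proves the two directional estimates (continuity in $\lambda$ at fixed $\alpha$, and in $\alpha$ at fixed $\lambda$) separately, with explicit upper bounds $|\lambda-\lambda'|\int_A e^{-\alpha y}$ and $\int_A g_{\alpha,\alpha',\lambda}$; joint continuity then follows from these by a short triangle-inequality step which the paper leaves implicit. You instead go directly for joint continuity via the monotone coupling of Lemma~\ref{lem:couplingPoisson} and an ``envelope'' process $\Pcal_{\alpha_n^\star,\lambda_n^\star}$, bounding the symmetric difference and applying Markov's inequality. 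Your variant has the modest advantage of making the joint continuity manifest in a single estimate, at the cost of invoking both Lemmas~\ref{lem:superpos} and~\ref{lem:couplingPoisson} (and of having to check, as you correctly flag, that the sandwich coupling lets you read off the intensities of the two set differences, which it does because the monotone coupling restricted to the slabs $\{\lambda e^{-\alpha y}\le z<\lambda_n^\star e^{-\alpha_n^\star y}\}$ and $\{\lambda_n e^{-\alpha_n y}\le z<\lambda_n^\star e^{-\alpha_n^\star y}\}$ is exactly the superposition decomposition). The dominated-convergence step is handled the same way in both.
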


\begin{proof} 
We start with the continuity in $\lambda$.
To this end, we pick $\alpha > \alpha_0$ and $0 < \lambda' < \lambda$.
By Lemma~\ref{lem:superpos}, we can couple $\Pcal_{\alpha,\lambda}, \Pcal_{\alpha,\lambda'}$ and $\Pcal_{\alpha,\lambda-\lambda'}$ such that
$\Pcal_{\alpha,\lambda}$ is the superposition of $\Pcal_{\alpha,\lambda'}$ and an independent copy of $\Pcal_{\alpha,\lambda-\lambda'}$.
Thus, 

\[ \begin{array}{rcl}
|\Pee_{\alpha,\lambda}(E) - \Pee_{\alpha,\lambda'}(E)| 
& \leq &
 \Pee(\text{there exists a point of $\Pcal_{\alpha,\lambda-\lambda'}$ that falls in $A$} ) \\
& \leq &
|\lambda-\lambda'| \int_A e^{-\alpha y}{\dd}x{\dd}y.
\end{array} \]

\noindent
Since $\int_A e^{-\alpha y}{\dd}x{\dd}y < \infty$ we can thus make the left-hand side arbitrarily small by taking $\lambda$ and 
$\lambda'$ close enough.

Continuity in $\alpha$ is similar.
Pick $\alpha > \alpha' > \alpha_0$.
By lemma~\ref{lem:superpos} we can couple $\Pcal_{\alpha,\lambda}, \Pcal_{\alpha',\lambda}$ so that 
$\Pcal_{\alpha',\lambda}$ is the superposition of $\Pcal_{\alpha,\lambda}$ with an independent Poisson process
with density $g_{\alpha,\alpha',\lambda}(x,y) := \lambda (e^{-\alpha'y} - e^{-\alpha y})$.
Reasoning as before, we see that

\[ 
|\Pee_{\alpha,\lambda}(E) - \Pee_{\alpha,\lambda'}(E)| \leq 
\int_A g_{\alpha,\alpha',\lambda}(x,y) {\dd}x{\dd}y.
\]

\noindent
Since $0 \leq g_{\alpha,\alpha',\lambda}(x,y) \leq \lambda e^{-\alpha_0 y}$, it follows from the dominated convergence theorem
that we can make the left-hand side arbitrarily small by choosing $\alpha, \alpha'$ close.
\end{proof}

The next lemma proves the continuity of $\theta (y;\alpha, \lambda)$ from above with respect to $\lambda$ and from below with respect to $\alpha$. 

\begin{lemma}\label{lem:thetactsabove}
For every $y \geq 0$ and $\alpha, \lambda > 0$ we have that%
\[ \theta(y;\alpha, \lambda) = \lim_{\lambda'\downarrow\lambda} \theta(y;\alpha,\lambda') =  
\lim_{\alpha'\uparrow\alpha} \theta(y;\alpha',\lambda). \]

\end{lemma}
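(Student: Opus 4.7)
My plan proceeds in four steps. First, the case $\alpha\le\tfrac12$ is handled trivially: Lemma~\ref{lem:thetaalphaless12} gives $\theta\equiv 1$ and both claimed equalities are immediate. So I restrict to $\alpha>\tfrac12$ for the remainder. The overall strategy is to use the coupling of Lemma~\ref{lem:couplingPoisson} to realize all $\Pcal_{\alpha,\lambda'}$ (and, for the second equality, all $\Pcal_{\alpha',\lambda}$) jointly as monotone subsets of the underlying Poisson process $Q$ on $\eR\times(0,\infty)^2$. Under this coupling the event $E_{\lambda'}$ that $(0,y)$ lies in an infinite component of $\Gamma[\{(0,y)\}\cup\Pcal_{\alpha,\lambda'}]$ is monotone non-decreasing in $\lambda'$, so continuity of measure reduces the first claim to showing $\Pee\bigl(\bigcap_n E_{\lambda_n}\setminus E_\lambda\bigr)=0$ for any sequence $\lambda_n\downarrow\lambda$.

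The crux is a \emph{bridge principle}. Suppose $\omega$ lies in the difference and let $C_0=C_0(\omega)$ be the (necessarily finite) component of $(0,y)$ in the $\lambda$-graph. Any point of $\Pcal_{\alpha,\lambda}$ adjacent to $C_0$ already belongs to $C_0$ by definition of a component, so the component of $(0,y)$ in the $\lambda_n$-graph can be strictly larger than $C_0$ only through the presence of some \emph{extra} point from $\Pcal_{\alpha,\lambda_n}\setminus\Pcal_{\alpha,\lambda}$ lying in the neighbour set $\Bcal(C_0):=\bigcup_{v\in C_0}\Bcal(v)$. A short shortest-path induction verifies the converse: in the absence of extras in $\Bcal(C_0)$, the $\lambda_n$-component of $(0,y)$ equals $C_0$. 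Hence on our bad event, for every $n$ there is at least one extra point in $\Bcal(C_0)$.

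Step three rules this out via a vanishing Poisson argument. The calculation $\int_{\Bcal(v)}e^{-\alpha y'}{\dd}x'{\dd}y'=2e^{y(v)/2}/(\alpha-\tfrac12)$ (essentially~\eqref{eq:ED}) shows $\int_{\Bcal(C_0)}e^{-\alpha y'}{\dd}x'{\dd}y'<\infty$ whenever $C_0$ is finite. Conditional on $\Pcal_{\alpha,\lambda}$, the number $N_n$ of extras in $\Bcal(C_0)$ arising when we pass from $\lambda$ to $\lambda_n$ is Poisson with mean $(\lambda_n-\lambda)\cdot\int_{\Bcal(C_0)}e^{-\alpha y'}{\dd}x'{\dd}y'\to 0$ (by Lemma~\ref{lem:superpos}). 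Moreover, under the $Q$-coupling the $N_n$ are monotone non-increasing in $n$ and their pointwise intersection is the set of $Q$-points on the measure-zero surface $\{z=\lambda e^{-\alpha y'}\}\cap(\Bcal(C_0)\times\mathbb{R})$, which is a.s.~empty. Being integer-valued, $N_n$ must therefore equal $0$ for all sufficiently large $n$ a.s., contradicting the bridge principle.

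The second equality follows by the identical scheme, letting $\alpha_n\uparrow\alpha$ and noting that the extra intensity $\lambda(e^{-\alpha_n y'}-e^{-\alpha y'})$ still integrates over $\Bcal(C_0)$ to a finite quantity vanishing in $n$ (with $\alpha_n>\tfrac12$ eventually, since $\alpha>\tfrac12$). The main obstacle I expect is resisting the temptation to extract a convergent subsequence of the infinite paths witnessing $\omega\in E_{\lambda_n}$ (or $\omega\in E_{\alpha_n}$): such paths may use different vertices for different $n$ and need not stabilise on any bounded region, so this direct approach seems hopeless. The bridge principle sidesteps this difficulty entirely by localising the possible failure to the single \emph{finite-intensity-measure} set $\Bcal(C_0)$, which is precisely where the hypothesis $\alpha>\tfrac12$ is critically used.
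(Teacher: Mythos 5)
Your proposal is correct, but it takes a genuinely different route from the one in the paper. The paper fixes $\eps>0$, approximates $\theta(y;\alpha,\lambda)$ from above by $\Pee_{\alpha,\lambda}(E_K)$ where $E_K$ is the event that a path from $(0,y)$ escapes the box $[-K,K]\times[0,K]$, observes that $E_K$ depends only on points in a region of finite intensity measure, and then invokes Lemma~\ref{lem:VerwBegrCont} (continuity in $(\alpha,\lambda)$ of probabilities of such locally determined events) together with the monotonicity $\theta(y;\alpha,\lambda)\le\theta(y;\alpha,\lambda')\le\Pee_{\alpha,\lambda'}(E_K)$ to squeeze. Your argument instead works under the simultaneous $Q$-coupling of Lemma~\ref{lem:couplingPoisson} and is structurally the twin of the paper's proof of Lemma~\ref{lem:thetactsbelow} for the opposite one-sided limits: you reduce to showing $\Pee(\bigcap_n E_{\lambda_n}\setminus E_\lambda)=0$, localise the possible discrepancy to extra points landing in the finite-intensity set $\Bcal(C_0)$ surrounding the finite $\lambda$-component $C_0$ of $(0,y)$ (the ``bridge principle''), and kill that possibility because the coupling surface $\{z=\lambda e^{-\alpha y'}\}$ a.s. carries no $Q$-points. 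The key advantage of this direction over Lemma~\ref{lem:thetactsbelow} is that here $C_0$ is automatically finite on the bad event, so no supercriticality hypothesis is required, and this is exactly where $\alpha>\tfrac12$ enters (to make $\Bcal(C_0)$ have finite intensity measure). One small implicit step you should make explicit: in deducing that $N_n=0$ eventually from ``the intersection of the shrinking regions carries no $Q$-points'', you need $N_1<\infty$ a.s.; this follows because the intensity measure of $\Bcal(C_0)\times[\lambda e^{-\alpha y'},\lambda_1 e^{-\alpha y'})$ is finite, but without this observation ``$N_n\ge1$ for all $n$'' would not by itself force a $Q$-point in the limit region. Each approach buys something: the paper's proof is modular (it reuses Lemma~\ref{lem:VerwBegrCont}, which is also needed for Lemma~\ref{lem:thetactsifnoperc}), while yours gives a unified treatment of this lemma and Lemma~\ref{lem:thetactsbelow} by a single mechanism and makes transparent why the present direction needs no percolation hypothesis.
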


\begin{proof}
The result is clearly trivial when $\alpha \leq \frac12$, by Lemma~\ref{lem:thetaalphaless12}.
Hence we can assume $\alpha > \frac12$.
Let us first remark that $\theta(y;\alpha,\lambda)$ is non-decreasing in $\lambda$. This follows from Lemma~\ref{lem:superpos}.

Let us fix $y,\lambda$.
For any $K>0$ we define the event
\[ E_K := \{ \text{there is a path 
 in $\Gamma (\Pcal_{\alpha,\lambda} \cup \{(0,y)\})$ starting at 
 $(0,y)$ that exits the box $[-K,K]\times[0,K]$}\}.
\]

\noindent
We have that $E_K \supseteq E_{K+1}$ for all $K$, and

\[ \theta(y; \alpha,\lambda) = \Pee_{\alpha,\lambda}( \bigcap_{K>y} E_K )
 = \lim_{K\to\infty} \Pee_{\alpha,\lambda}( E_K ).
\]

\noindent
Hence, for $\eps > 0$ arbitrary, we can find a $K$ such that 
$\Pee_{\alpha,\lambda}(E_K) \leq \theta(y;\alpha,\lambda) + \eps/2$.
Now notice that the event $E_K$ depends only on the points of $\Pcal_{\alpha,\lambda}$
in the set 

\[ \begin{array}{rcl} 
A_K 
& := &
 \{ (x',y') : \exists (x'',y'') \in [-K,K]\times[0,K] \text{ such that } (x',y')\in \Bcal ((x'',y'')) \} \\
& = & 
\{ (x',y') : |x'| \leq K+e^{\frac12(y'+K)} \}.
\end{array} \]

\noindent
(Every point that is connected by an edge to a point in $[-K,K]\times[0,K]$ must lie in 
$A_K$.)
We pick an arbitrary $\alpha > \alpha_0 > \frac12$ and compute 

\[ \int_{A_K} e^{-\alpha_0 t}{\dd}s{\dd}t
 = \int_0^\infty (K+e^{\frac12(t+K)})e^{-\alpha_0 t}{\dd}t 
 = (K + 2e^{K/2})/(\alpha_0-\frac12) < \infty.
\]

\noindent
Hence, Lemma~\ref{lem:VerwBegrCont} applies. In particular, there exists a $\delta > 0$ such 
that $\lambda < \lambda' < \lambda+\delta$ implies that 
$\Pee_{\alpha,\lambda'}(E_K) \leq \Pee_{\alpha,\lambda}(E_K) + \eps/2$.
Hence, for such $\lambda'$, we have

\[ \theta(y;\alpha,\lambda) \leq \theta(y;\alpha,\lambda') \leq
 \Pee_{\alpha,\lambda'}(E_K) \leq \theta(y;\alpha,\lambda) + \eps.
\]

\noindent
As $\eps$ was arbitrary, we indeed see that $\displaystyle \theta(y;\alpha,\lambda) = \lim_{\lambda'\downarrow\lambda} \theta(y;\alpha,\lambda')$.

Completely analogously, there is a $\delta > 0$ such that $\alpha > \alpha' > \alpha-\delta$ implies that
$\Pee_{\alpha',\lambda}(E_K) \leq \Pee_{\alpha,\lambda}(E_K) + \eps/2$ and hence 

\[ \theta(y;\alpha,\lambda) \leq \theta(y;\alpha',\lambda) \leq
 \Pee_{\alpha',\lambda}(E_K) \leq \theta(y;\alpha,\lambda) + \eps.
\]

\noindent
As $\eps$ was arbitrary, we can again conclude that $\displaystyle \theta(y;\alpha,\lambda) = \lim_{\alpha'\uparrow\alpha} \theta(y;\alpha',\lambda)$.
\end{proof}

To deduce the continuity with respect to $\alpha$ and $\lambda$ in the directions not covered by the previous lemma, we need to make a 
case distinction. This depends on whether or not the certain points around which we want to show continuity are points where percolation 
occurs.
\begin{lemma}\label{lem:thetactsbelow}
Let $\alpha, \lambda >0$.
Suppose that there exists a $\lambda_0 < \lambda$ such that $\Pee_{\alpha,\lambda_0}(\text{percolation} ) > 0$.
Then for all $y\geq 0$ we have  $\lim_{\lambda' \uparrow \lambda} \theta( y; \alpha, \lambda' ) = \theta(y;\alpha, \lambda)$.

Similarly, if there exists an $\alpha_0 > \alpha$ such that $\Pee_{\alpha_0,\lambda}(\text{percolation})>0$, then 
$\lim_{\alpha' \downarrow \alpha} \theta( y; \alpha',\lambda ) = \theta(y;\alpha,\lambda)$, for all $y \geq 0$.
\end{lemma}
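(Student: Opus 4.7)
The plan is to exploit the coupling of Lemma~\ref{lem:couplingPoisson} together with uniqueness of the infinite component (Lemma~\ref{lem:therecanbeonlyone}) to recast ``being in an infinite component'' as the event ``being connected to a fixed infinite cluster,'' and then apply monotone continuity of probability.

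For the statement about $\lambda$, fix $\lambda_0 < \lambda$ with $\Pee_{\alpha,\lambda_0}(\text{percolation}) > 0$, so by Theorem~\ref{lem:perc01} percolation occurs almost surely at $(\alpha,\lambda_0)$. Realise all $\Pcal_{\alpha,\lambda'}$ for $\lambda' \in [\lambda_0,\lambda]$ on a common probability space via Lemma~\ref{lem:couplingPoisson}, so that $\lambda' \mapsto \Pcal_{\alpha,\lambda'}$ is pointwise increasing. Let $\Cscr^\infty_0$ denote the (almost surely unique) infinite component of $\Gamma(\Pcal_{\alpha,\lambda_0})$. Since the adjacency relation is purely pairwise, $\Gamma(\Pcal_{\alpha,\lambda_0})$ sits as a subgraph of $\Gamma(\Pcal_{\alpha,\lambda'})$ on its own vertex set, and combining this with Lemma~\ref{lem:therecanbeonlyone} applied at level $\lambda'$ forces $\Cscr^\infty_0$ to be contained in the unique infinite component of $\Gamma(\Pcal_{\alpha,\lambda'})$ for every $\lambda' \in [\lambda_0,\lambda]$.

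Now define
\[
E(\lambda') := \{(0,y) \text{ is joined by a path in } \Gamma(\{(0,y)\}\cup\Pcal_{\alpha,\lambda'}) \text{ to some point of } \Cscr^\infty_0\}.
\]
Using Lemma~\ref{lem:therecanbeonlyone} once more (with $(0,y)$ added as an extra vertex), almost surely $E(\lambda')$ coincides with the event that $(0,y)$ lies in an infinite component of $\Gamma(\{(0,y)\}\cup\Pcal_{\alpha,\lambda'})$, so $\Pee(E(\lambda')) = \theta(y;\alpha,\lambda')$. The family $\{E(\lambda')\}$ is monotone increasing in $\lambda'$ under the coupling, and I claim $\bigcup_{\lambda'<\lambda} E(\lambda') = E(\lambda)$ almost surely. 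The non-trivial inclusion holds because any witnessing path in $\Gamma(\{(0,y)\}\cup\Pcal_{\alpha,\lambda})$ has only finitely many vertices, and in the coupling of Lemma~\ref{lem:couplingPoisson} each such vertex appears in $\Pcal_{\alpha,\lambda'}$ for every $\lambda'$ larger than its point-specific threshold $z(p)\, e^{\alpha y(p)} < \lambda$; the maximum of these finitely many thresholds yields some $\lambda^\ast<\lambda$ after which the whole path is already present. Monotone continuity of probability then gives $\theta(y;\alpha,\lambda') \uparrow \theta(y;\alpha,\lambda)$ as $\lambda' \uparrow \lambda$.

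The continuity in $\alpha$ is obtained by the same template with the roles swapped: in the coupling of Lemma~\ref{lem:couplingPoisson}, $\alpha' \mapsto \Pcal_{\alpha',\lambda}$ is pointwise \emph{decreasing}, so one fixes $\alpha_0 > \alpha$ with percolation at $(\alpha_0,\lambda)$, takes $\Cscr^\infty_0$ to be the infinite component at level $\alpha_0$, and runs the analogous monotone convergence as $\alpha' \downarrow \alpha$. The main technical point---and the reason the hypothesis on $\lambda_0$ (respectively $\alpha_0$) is indispensable---is the step identifying ``being in some infinite cluster'' with ``being connected to the specific cluster $\Cscr^\infty_0$''; without a reference cluster to anchor against, the union-of-events identity can fail, which is precisely why the lemma cannot be pushed past the percolation threshold.
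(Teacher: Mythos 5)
Your proposal is correct and follows essentially the same route as the paper: both use the coupling of Lemma~\ref{lem:couplingPoisson}, the uniqueness statement of Lemma~\ref{lem:therecanbeonlyone} to anchor ``being in an infinite cluster'' against the fixed reference cluster $\Cscr^\infty_0$ at level $\lambda_0$, and the observation that a finite witnessing path involves only finitely many point-thresholds $z(p)e^{\alpha y(p)} < \lambda$ so the path already appears at some $\lambda^\ast < \lambda$ (the paper credits this argument to Van den Berg and Keane). The only cosmetic difference is that you phrase the conclusion via monotone continuity of probability for the increasing family $E(\lambda')$, whereas the paper argues the same point by contradiction.
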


\begin{proof} 
The proof is an adaptation of a proof by Van den Berg and Keane~\cite{BergKeane} for standard bond percolation. 
(see also Lemma 8.10 in~\cite{GrimmettBoek}, p.204).
Throughout the proof we consider the coupling provided by Lemma~\ref{lem:couplingPoisson} that ensures that 
a.s.~$\Pcal_{\alpha',\lambda'} \subseteq \Pcal_{\alpha'',\lambda''}$ whenever $\alpha' \geq \alpha''$ and $\lambda' \leq \lambda''$.

We start by proving the first statement of the lemma.
Let $E_{\lambda'}$ denote the event that $(0,y)$ is in an infinite component of $\Gamma (\Pcal_{\alpha,\lambda'}\cup \{ (0,y) \})$.
Observe that $E_{\lambda} \supseteq \bigcup_{\lambda' < \lambda} E_{\lambda'}$.
Since $\lim_{\lambda' \uparrow \lambda} \Pee( E_{\lambda'} ) = \Pee( \bigcup_{\lambda' < \lambda} E_{\lambda'} )$, it suffices
to show that 

\[ 
\Pee( \bigcup_{\lambda' < \lambda} E_{\lambda'} ) = \Pee( E_\lambda ).
\]

Aiming for a contradiction, let us assume that 
$\Pee( E_\lambda \cap \bigcap_{\lambda' < \lambda} E_{\lambda'}^c ) > 0$, and
let us consider a realization of our marked Poisson process $\Pcal_{\alpha,\lambda}$ for which
$E_\lambda \cap \bigcap_{\lambda' < \lambda} E_{\lambda'}^c$ holds.

Note that, a.s., in $\Gamma_{\alpha,\lambda_0}$ there is an infinite component.
If $E_{\lambda}$ holds, then there is a finite path 
$p_0 = (0,y), p_1,\dots, p_K \in \Pcal_{\alpha,\lambda} \cup \{(0,y)\}$ that 
connects $(0,y)$ with a vertex $p_K$ in the infinite component of $\Gamma_{\alpha,\lambda_0}$.
(Note that by Lemma~\ref{lem:therecanbeonlyone} there is only one infinite component in $\Gamma_{\alpha,\lambda}$, so such a path exists a.s.)
If $\Qcal$ is the intensity one Poisson process on $\eR \times [0,\infty)^2$ used in the construction of the coupling
in Lemma~\ref{lem:couplingPoisson}, then 
there are points $(x_1,y_1,z_1) , \dots, (x_K, y_K, z_K) \in \Qcal$ such that 
$p_i = (x_i, y_i)$ and $z_i < \lambda e^{-\alpha y_i}$ for all $i=1,\dots, K$.
So in particular, there must be a $\lambda_0 \leq \lambda' < \lambda$ such that 
$z_i < \lambda' e^{-\alpha y_i}$ for all $i=1,\dots, K$. This implies
that $p_1, \dots, p_K \in \Pcal_{\alpha,\lambda'}$ and hence $E_{\lambda'}$ holds 
for some $\lambda' < \lambda$. Contradiction!
This proves that $\Pee( E_\lambda \cap \bigcap_{\lambda' < \lambda} E_{\lambda'}^c ) = 0$ after all, and hence the lemma.

The proof of the second part is completely analogous. 
\end{proof}

Finally, we need to consider the case where $\alpha=1, \lambda < \lambdac$ and $\alpha'$ approaches $1$ from above. 
To this end, we will need a lemma in which we approximate the event that $(0,y)$ does not lie in an infinite component by the event that 
the component of $(0,y)$ induced within a large but bounded region is small. 

More specifically, for $h \geq n \geq y \geq 0$ we define the event $U(y;n,h)$ as follows

\[ U(y;n,h) := \left\{ \begin{array}{l}
\text{In $\Gamma\left( (\Pcal_{\alpha,\lambda} \cup \{(0,y)\})\cap [-e^h, e^h] \times[0,h] \right)$, the component of $(0,y)$} \\ 
\text{   is contained in $[-n,n]\times[0,n]$ and has at most $n$ vertices}
\end{array} \right\}.
\]

\begin{figure}[h]
\begin{center}
\includegraphics[scale = 0.5]{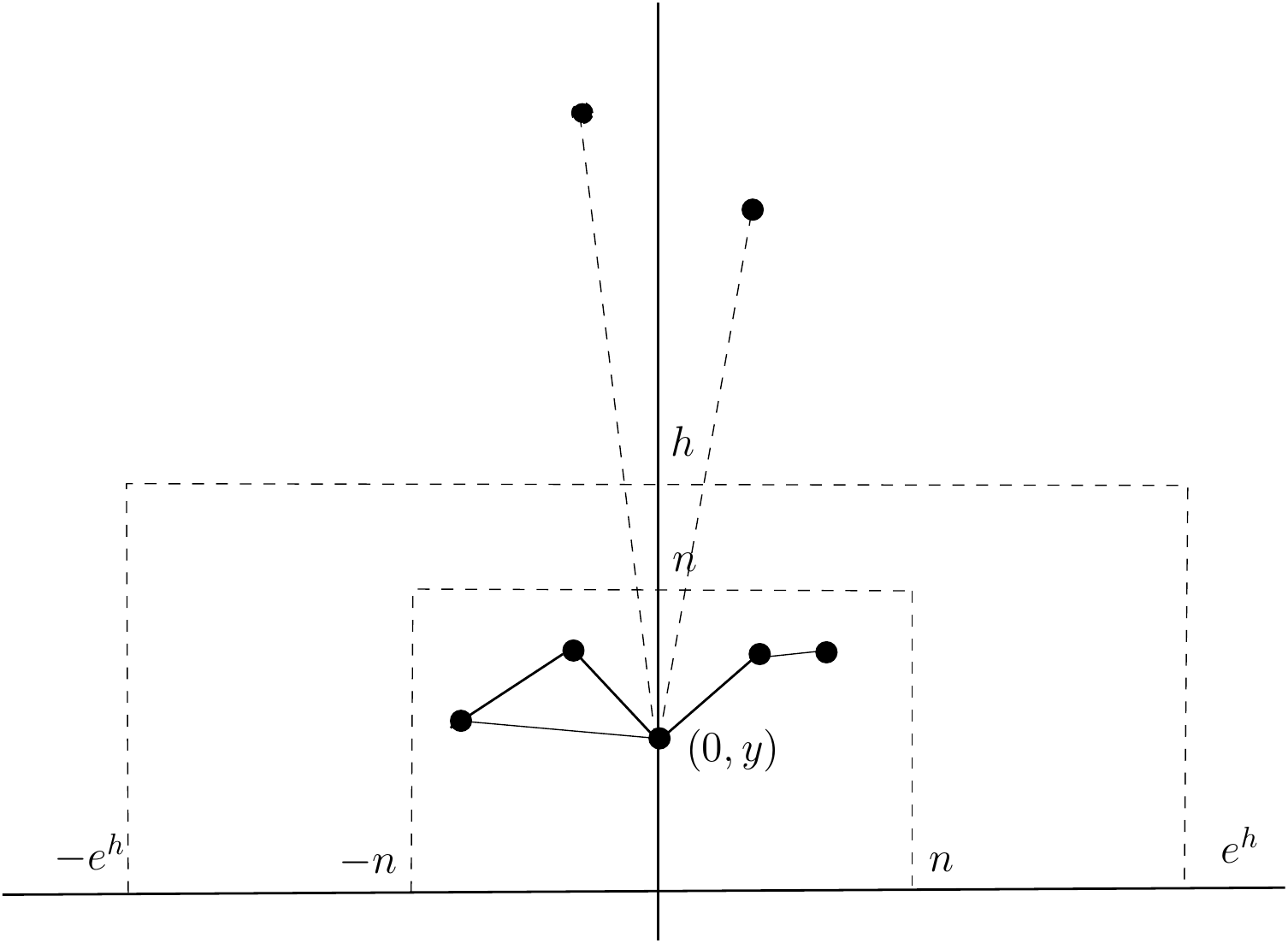}
\end{center}
\caption{The event $U(y;n,h)$.\label{fig:U}}
\end{figure}

\begin{lemma}\label{lem:U}
For every $\alpha > 1/2, \lambda > 0$ and $K, \eps > 0$, there exists an $n_0 = n_0(\alpha,\lambda,K,\eps)$ 
such that

\[ \Pee_{\alpha,\lambda}( U(y;n,h) ) \geq 1-\theta(y;\alpha,\lambda) - \eps, \]

\noindent
for all $h \geq n \geq n_0$ and all $0 \leq y \leq K$.
\end{lemma}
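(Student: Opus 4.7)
The plan is to define an auxiliary ``unbounded'' event that (a) implies $U(y;n,h)$ and (b) has probability approaching $1-\theta(y;\alpha,\lambda)$ uniformly in $y \in [0,K]$. Let $C_\infty(y)$ denote the component of $(0,y)$ in $\Gamma(\Pcal_{\alpha,\lambda} \cup \{(0,y)\})$ and set
\[
A_n(y) := \bigl\{ C_\infty(y) \subseteq [-n,n]\times[0,n] \text{ and } |C_\infty(y)| \leq n \bigr\}.
\]

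First I would verify the inclusion $A_n(y) \subseteq U(y;n,h)$, valid as soon as $h \geq n \geq K$. Indeed, under $A_n(y)$ the full-graph component $C_\infty(y)$ sits inside $[-n,n]\times[0,n] \subseteq [-e^h,e^h]\times[0,h]$, so restricting the vertex set to the outer box neither removes a vertex of $C_\infty(y)$ nor severs any of its internal edges, and any vertex reachable from $(0,y)$ in the restricted graph is also reachable in the full graph. Hence the restricted component equals $C_\infty(y)$, which satisfies the conditions in the definition of $U$.

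Next, since $A_n(y)$ is monotone in $n$ and $\bigcup_n A_n(y) = \{C_\infty(y) \text{ is finite}\}$ almost surely, monotone convergence gives the pointwise limit $g_n(y) := \Pee(A_n(y)) \uparrow g(y) := 1-\theta(y;\alpha,\lambda)$. The function $g$ is continuous in $y$ by Lemma~\ref{lem:thetactsy}, and both $g$ and $g_n$ are non-increasing in $y$: coupling all the $(0,y)$'s to the same Poisson realization, the half-plane ball $\Bcal((0,y))$ and hence $C_\infty(y)\setminus\{(0,y)\}$ grow with $y$, so (provided $n \geq K \geq y$ so that $(0,y) \in [-n,n]\times[0,n]$) the event $A_n(y)$ is deterministically monotonically decreasing in $y$ under the coupling.

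The key step is to upgrade this pointwise convergence to uniform convergence on $[0,K]$, which I would handle by a Dini-type argument that relies only on monotonicity of $g_n$ in $y$ and continuity of $g$, sidestepping any need to prove continuity of $g_n$. Given $\eps>0$, pick a partition $0 = t_0 < t_1 < \dots < t_M = K$ fine enough that $g(t_{i-1}) - g(t_i) < \eps/2$ for each $i$; using pointwise convergence at the finitely many $t_i$'s, find $n_0 \geq K$ such that $g_n(t_i) \geq g(t_i) - \eps/2$ for all $n \geq n_0$ and all $i$. For $y \in [t_{i-1}, t_i]$ and $n\geq n_0$, the monotonicity of $g_n$ and $g$ in $y$ yields $g_n(y) \geq g_n(t_i) \geq g(t_i) - \eps/2 \geq g(t_{i-1}) - \eps \geq g(y) - \eps$, and combining with the inclusion above gives $\Pee(U(y;n,h)) \geq g_n(y) \geq 1 - \theta(y;\alpha,\lambda) - \eps$. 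The main obstacle is precisely this uniformity in $y$; the monotonicity in $y$ coming from the coupling is what makes it tractable.
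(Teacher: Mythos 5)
Your proof is correct, and it takes a genuinely different, somewhat cleaner route through the uniformity step than the paper does. Both proofs center on the same ``unbounded'' approximating event --- your $A_n(y)$ is the paper's $E(y,n)$, that the component of $(0,y)$ in the full graph $\Gamma(\Pcal_{\alpha,\lambda}\cup\{(0,y)\})$ has at most $n$ vertices and lies in $[-n,n]\times[0,n]$ --- and both use the inclusion $A_n(y)\subseteq U(y;n,h)$ for $h\geq n$, which reduces the problem to showing that $\Pee(A_n(y))\geq 1-\theta(y;\alpha,\lambda)-\eps$ uniformly over $y\in[0,K]$ for large $n$. Where the two diverge is how that uniformity is extracted. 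The paper asserts that for each fixed $n$ the map $y\mapsto\Pee(E(y,n))$ is continuous (by an adaptation of the argument for Lemma~\ref{lem:thetactsy}) and then covers $[0,K]$ by finitely many continuity neighborhoods around points at which the pointwise inequality already holds. You instead exploit a coupling monotonicity: since $\Bcal((0,y))\subseteq\Bcal((0,y'))$ for $y\leq y'$, the component of $(0,y)$ minus its root is contained in the component of $(0,y')$ minus its root, and because $y,y'\leq K\leq n$ guarantees both roots lie in $[-n,n]\times[0,n]$, one indeed gets the deterministic inclusion $A_n(y')\subseteq A_n(y)$. Combined with the continuity of $y\mapsto 1-\theta(y;\alpha,\lambda)$ (Lemma~\ref{lem:thetactsy}) this powers a Dini-type argument over a fine partition of $[0,K]$, and the chain of inequalities $g_n(y)\geq g_n(t_i)\geq g(t_i)-\eps/2\geq g(t_{i-1})-\eps\geq g(y)-\eps$ is correct. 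Your route avoids re-establishing continuity of $\Pee(E(\cdot,n))$ for each $n$, at the modest cost of the coupling observation; the paper's route is more robust in situations where such monotonicity in $y$ is unavailable. Both are compactness arguments on $[0,K]$ and both are sound.
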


\begin{proof}
Let $E(y)$ denote the event that $(0,y)$ is in a finite component of $\Gamma(\Pcal_{\alpha,\lambda} \cup \{(0,y)\})$, and let 
$E(y,n)$ be the event that this component has at most $n$ vertices and is contained in $[-n,n]\times[0,n]$.
Clearly $E(y)=\bigcup_{n} E(y,n)$, so that there exists an $n_0 = n_0(y)$ such that%
\[ \Pee_{\alpha, \lambda} ( E(y,n) ) \geq \Pee_{\alpha,\lambda} ( E(y) ) - \eps/3 = 1 - \theta(y;\alpha,\lambda) - \eps/3, \]

\noindent
for all $n \geq n_0$.
Recall that $\theta$ is continuous in $y$ by Lemma~\ref{lem:thetactsy}.
By an almost verbatim repeat of the proof of Lemma~\ref{lem:thetactsy}, we have that $y \mapsto \Pee_{\alpha,\lambda} ( E(y,n) )$ is continuous in $y$ for all fixed $n$.
Thus, for every $y \in [0,K]$ there is a $\delta(y)$ such that 
$\Pee( E(y', n_0(y) ) \geq 1 - \theta(y';\alpha,\lambda) - 2\eps/3$ for all 
$y' \in (y-\delta(y),y+\delta(y'))$.
By compactness, there exist $y_1, \dots, y_M \in [0,K]$ such that 
$[0,K] \subseteq \bigcup_{i=1}^M (y_i-\delta(y_i),y_i+\delta(y_i))$.
Let us now set $n_0 := \max( n_0(y_1), \dots, n_0(y_M) )$.
Then we have that, for every $n \geq n_0$ and all $0\leq y \leq K$, $\Pee_{\alpha,\lambda} ( E(y,n) ) \geq 1 - \theta(y; \alpha,\lambda) - 2\eps/3$.
(Since each $y \in [0,K]$ is in some interval $(y_i-\delta(y_i),y_i+\delta(y_i))$ and hence
$\Pee_{\alpha,\lambda}  ( E(y,n) ) \geq \Pee_{\alpha,\lambda}  ( E(y,n_0(y_i) ) \geq 1 - \theta(y; \alpha,\lambda) - 2\eps/3$.)

To conclude the proof, we simply remark that $U(y,n,h) \supseteq E(y,n)$ for all every $h\geq n$. 
\end{proof}

\begin{lemma}\label{lem:thetactsifnoperc}
If $\Pee_{1,\lambda}(\text{percolation}) = 0$, then
$\lim_{\alpha'\to 1} \theta(y;\alpha',\lambda) = \theta(y;1,\lambda) = 0$ for all $y\geq 0$. 
\end{lemma}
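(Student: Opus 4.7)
The plan is to split the limit $\alpha' \to 1$ into its two one-sided limits and dispatch them separately, picking up the identification of the value $\theta(y;1,\lambda)=0$ from the hypothesis on the way. Since $\Pee_{1,\lambda}(\text{percolation})=0$, Lemma~\ref{lem:linkthetaperc}\ref{itm:thetazeroifnoperc} immediately gives $\theta(y;1,\lambda)=0$ for every $y\geq 0$, so the ``value'' part of the conclusion is free.

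For the left limit $\alpha'\uparrow 1$, I would simply invoke Lemma~\ref{lem:thetactsabove}, which is already set up to give exactly the needed continuity from below:
\[
\lim_{\alpha'\uparrow 1} \theta(y;\alpha',\lambda) \;=\; \theta(y;1,\lambda) \;=\; 0.
\]
For the right limit $\alpha'\downarrow 1$, the key observation is that monotonicity in $\alpha'$ is immediate from the coupling of Lemma~\ref{lem:couplingPoisson}: for every $\alpha'\geq 1$ that lemma realises $\Pcal_{\alpha',\lambda}\subseteq \Pcal_{1,\lambda}$ almost surely on a common probability space. The event that $(0,y)$ lies in an infinite component of $\Gamma(\{(0,y)\}\cup V)$ is monotone increasing in the point set $V$ (adjoining extra points can only enlarge the component through $(0,y)$), so the coupling yields
\[
\theta(y;\alpha',\lambda) \;\leq\; \theta(y;1,\lambda) \;=\; 0 \qquad \text{for every } \alpha'\geq 1.
\]
In other words $\theta(y;\alpha',\lambda)=0$ identically on $[1,\infty)$, which trivially gives the right limit.

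Putting the two one-sided limits together with the identification $\theta(y;1,\lambda)=0$ completes the proof. I do not foresee any substantial obstacle here: the delicate continuity-from-below lemma (Lemma~\ref{lem:thetactsbelow}) cannot be applied because its hypothesis, existence of some parameter with positive percolation probability on the relevant side, is precisely what is ruled out under the present assumption; but this is exactly the regime in which monotone coupling already forces $\theta$ to vanish, making the right limit a one-line consequence of Lemma~\ref{lem:couplingPoisson} rather than requiring the Van den Berg--Keane type argument.
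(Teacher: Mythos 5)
Your proposal is correct, but it takes a genuinely different and noticeably shorter route than the paper's proof. The paper establishes the two-sided limit directly in one pass: it invokes Lemma~\ref{lem:U} to reduce the non-percolation event to a finitary event $U(y;n,h)$, controls the leakage outside $[-e^h,e^h]\times[0,h]$ via the set $R_h$ and a first-moment bound, and then applies Lemma~\ref{lem:VerwBegrCont} to move $\alpha$ in a whole neighbourhood of $1$ while keeping $\Pee_{\alpha,\lambda}(U)$ close to $\Pee_{1,\lambda}(U)$; this gives $\Pee_{\alpha,\lambda}(E)<\eps$ uniformly for $\alpha$ near $1$. You instead factor the claim through results already proved: the identification $\theta(y;1,\lambda)=0$ is Lemma~\ref{lem:linkthetaperc}\ref{itm:thetazeroifnoperc}, the left limit $\alpha'\uparrow 1$ is precisely the second conclusion of Lemma~\ref{lem:thetactsabove} (whose hypotheses place no restriction on whether percolation occurs), and the right limit $\alpha'\downarrow 1$ follows from the coupling of Lemma~\ref{lem:couplingPoisson} together with the fact that the event ``$(0,y)$ lies in an infinite component of $\Gamma(\{(0,y)\}\cup V)$'' is monotone increasing in $V$ (edges depend only on the two endpoints, so enlarging the point set preserves all existing edges), giving $\theta(y;\alpha',\lambda)\leq\theta(y;1,\lambda)=0$ for every $\alpha'\geq 1$. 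Both arguments are sound; what the paper's buys is a self-contained proof that makes explicit the quantitative finite-volume approximation behind the continuity, while yours is more modular, essentially reducing the lemma to bookkeeping on top of lemmas that already carry all the analytic content. The one thing worth stating explicitly (which you gesture at but do not spell out) is the monotonicity of the percolation event in the underlying point set; it is elementary here but is the load-bearing step of your right-limit argument.
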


We remark that, since we do not know whether or not there is percolation a.s.~when $\lambda=\lambdac$, we
do not just want to change the prerequisite $\Pee_{1,\lambda}(\text{percolation}) = 0$ into 
$\lambda<\lambdac$. If in a future work, it turns out that there is no percolation a.s.~at $\lambda=\lambdac$ then the 
lemma applies.

\begin{proof}
Let us fix some $y \geq 0$ and let $\eps > 0$ be arbitrary.
Using the previous lemma, we select an $n$ such that $\Pee_{1,\lambda}( U(y,n,h) ) \geq 1-\eps/3$
for all $h\geq n$.
Let

\begin{equation*}
\begin{array}{rcl}
R_h &=&  
\{ (x,y) \in \eR\times[0,\infty) \setminus [-e^h,e^h]\times[0,h] : \exists (x',y') \in [-n,n]\times[0,n]
\text{ such that } |x-x'| \leq e^{\frac12(y+y')} \} \\
&=& \{ (x,y) \eR\times[0,\infty) \setminus [-e^h,e^h]\times[0,h] : |x| \leq n+e^{\frac12(y+n)} \}.
\end{array}
\end{equation*}

Note that, if we keep $n$ fixed and make $h$ sufficiently large, we have $n+e^{\frac12(y+n)} < e^h$.
Thus, for sufficiently large $h$, we have $R_h \subseteq \{y \geq h\}$.
Denoting by $A_h$ the event that $R_h \cap \Pcal_{\alpha, \lambda} \not = \emptyset$, we see that 

\[ 
 \Pee_{\alpha,\lambda}( A_h ) \leq \Ee |R_t \cap \Pcal_{\alpha, \lambda} | 
 \leq \int_h^{\infty} 2(n+e^{\frac12(n+t)})\lambda e^{-\alpha t}dt  
 = \frac{2\lambda n}{\alpha}e^{-\alpha h} + \frac{2\lambda e^{n/2}}{\alpha-\frac12} e^{(\frac12-\alpha)h}.
\]

\noindent
Thus, we can fix a $h = h(\eps,n, \lambda)$ sufficiently large for 
$\Pee_{\alpha,\lambda}( A_h ) < \eps/2$ to hold uniformly for all $\alpha > .9$.

By Lemma~\ref{lem:VerwBegrCont}, there exists a $\delta>0$ such that 
$|\Pee_{\alpha,\lambda}( U(y;n,h) ) - \Pee_{1,\lambda}( U(y;n,h) )| \leq \eps/2$ for all $\alpha \in (1-\delta,1+\delta)$.
Let $E$ denote the event that $(0,y)$ is in an infinite component.
Then $E \subseteq U(y;n,h)^c \cup A_h$.
It follows that, for all $\alpha \in (1-\delta,1+\delta)$, 

\[ \Pee_{\alpha,\lambda}( E ) < \eps. \]

\noindent
Since $\eps>0$ was arbitrary, the result follows.
\end{proof}




\subsubsection*{The continuity of $c(\alpha, \nu)$.}

Here we briefly spell out how part~\ref{itm:main.v} of Theorem~\ref{thm:main} follows from the lemmas we have proved in this section.
That $c$ is continuous in the points claimed in property~\ref{itm:main.v} follows from Lemmas~\ref{lem:thetactsabove},~\ref{lem:thetactsbelow},
~\ref{lem:thetactsifnoperc} together with properties~\ref{itm:main.i}--\ref{itm:main.iv}, using the dominated convergence theorem.
(To apply the dominated convergence theorem, we note that $\theta(y,\alpha,\nu\alpha/\pi) \alpha e^{-\alpha y}$ is continous in $y$ by 
Lemma~\ref{lem:thetactsy}, and hence measurable, and that it is majorized by the integrable function $e^{-\alpha_0 y}$ with $0 < \alpha_0 < \alpha$.)
That $c$ has a point of discontinuity on every $(\alpha,\nu) \in \{1\}\times (\nuc,\infty)$ follows from the fact that 
$c(1,\nu) > 0$ for $\nu>\nuc$ but $c(\alpha,\nu) = 0$ for all $\alpha > 1$.

\section{Transferring to $G(N;\alpha, \nu)$: proof of Theorem~\ref{thm:main}}\label{sec:transfer}

\subsection{Finitary approximation of the percolation probability}
In this section, we prove some preliminary lemmas that will enable us to transfer the behaviour of the continuum percolation model to 
the finite random graph $G(N;\alpha, \nu)$. Much as in Lemma~\ref{lem:U} above, these lemmas approximate the probability 
$\theta (y;\alpha, \lambda)$ that the point $(0,y)$ belongs to an infinite component by events that are determined within a large but bounded domain. 

For $y, w, h \geq 0$, we define the event $T(y;h, w)$ as follows

\[ T(y;h,w) := \begin{array}{l}{\big\{}
\text{In $\Gamma(\Pcal_{\alpha,\lambda} \cup \{(0,y)\})$, there is a path between $(0,y)$ and a point in $\eR \times [h, 2h]$,} \\ 
\text{  and all points of this path lie in $[-w e^h, w e^h]\times[0,2h]$}{\big\}}
\end{array} 
\]

\noindent
See Figure~\ref{fig:Phw} for an illustration of the event $T(y;h,w)$.

\begin{figure}[h]
\begin{center}
\input{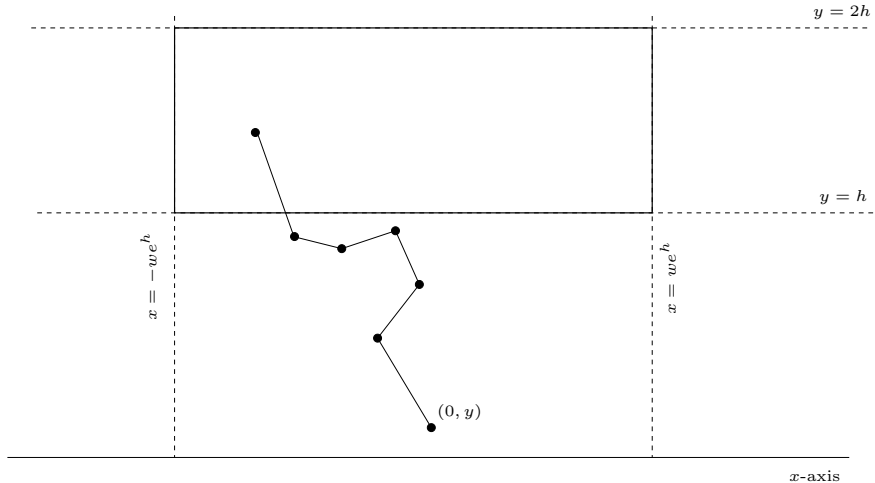}
\end{center}
\caption{Illustration of the event $T(y;h,w)$.\label{fig:Phw}}
\end{figure}

\begin{lemma}\label{lem:T}
Fix $\frac12 < \alpha \leq 1$ and $\lambda > 0$ such that $\Pee_{\alpha,\lambda}(\text{percolation}) = 1$. 
For every $K,\eps>0$ there exist constants $w = w(\alpha,\lambda,K,\eps), h_0 = h_0(\alpha,\lambda,K,\eps) > 0$ such that

\[ 
\Pee_{\alpha,\lambda}( T(y;w,h) )  
\geq \theta(y;\alpha,\lambda) - \eps, \]

\noindent
for all $0 \leq y \leq K$ and all $h \geq h_0$.
\end{lemma}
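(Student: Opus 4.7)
My plan is to show that on the event $E(y) := \{(0,y)\in C_\infty\}$ (which has probability $\theta(y;\alpha,\lambda)$), one can almost surely find a path from $(0,y)$ to the strip $\mathbb{R}\times[h,2h]$ that stays inside $[-we^h, we^h]\times[0,2h]$, provided $h$ and $w$ are large enough. The argument proceeds in three steps.

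First I would establish that on $E(y)$, almost surely the cluster of $(0,y)$ has unbounded $y$-coordinate. Suppose instead that, with positive probability, the cluster were contained in $\mathbb{R}\times[0,M]$ for some $M$. Edges within this strip have length at most $e^M$, so partitioning $\mathbb{R}$ into intervals $J_k = [ke^M,(k+1)e^M)$, edges between $J_k$- and $J_{k'}$-points force $|k-k'|\leq 1$. The event that $J_k\times[0,M]$ is devoid of points of $\Pcal_{\alpha,\lambda}$ has positive probability $p_e=\exp(-\lambda e^M(1-e^{-\alpha M})/\alpha)$, independently across $k$. Hence, almost surely, infinitely many empty intervals flank the origin on either side, trapping the component of $(0,y)$ between two empty intervals and forcing it to be finite, contradicting $E(y)$.

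Next, let $L(y,h)$ denote the event that some path in $\Gamma(\Pcal_{\alpha,\lambda}\cup\{(0,y)\})$ connects $(0,y)$ to a vertex of height $\geq h$. The events $L(y,h)$ are monotone decreasing in $h$ and $\bigcap_h L(y,h)=E(y)$ up to null events by the previous step, so $\Pee_{\alpha,\lambda}(L(y,h))\downarrow\theta(y;\alpha,\lambda)$. A variant of the proof of Lemma~\ref{lem:thetactsy} yields continuity of $y\mapsto\Pee_{\alpha,\lambda}(L(y,h))$ for fixed $h$; together with compactness of $[0,K]$ and Dini's theorem, this makes the convergence uniform in $y\in[0,K]$. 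Thus there exists $h_1=h_1(\alpha,\lambda,K,\eps)$ with $\Pee_{\alpha,\lambda}(L(y,h))\geq\theta(y;\alpha,\lambda)-\eps/2$ for all $y\in[0,K]$ and $h\geq h_1$.

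The third and principal step is to upgrade $L(y,h)$ to $T(y;h,w)$ by confining the path to the box $B_h := [-we^h,we^h]\times[0,2h]$. This splits into (a)~bounding the probability that the path's first vertex of height $\geq h$ actually lies above $2h$, and (b)~bounding the horizontal excursion by $we^h$. For (a), a Mecke-type calculation bounds the expected number of edges $pq$ with $x(p),x(q)\in[-we^h,we^h]$, $y(p)<h$, $y(q)>2h$ by a constant times $w\,e^{(2-2\alpha)h}/(\alpha-\tfrac12)^2$; for $\alpha\geq 1$ this is uniformly controlled in $h$, and for $\alpha<1$ one refines by replacing $2h$ with $h+C(\eps)$ and exploiting monotonicity in $\alpha$ via Lemma~\ref{lem:superpos}. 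For (b)---the main obstacle---I would use uniqueness of $C_\infty$ (Lemma~\ref{lem:therecanbeonlyone}) and translation invariance in $x$ to show that, within $B_h$, the infinite cluster has a positive density of points in the strip $\mathbb{R}\times[h,2h]$, so that for $w$ large the box contains many $C_\infty$-targets. A renormalization scheme based on the dyadic rectangles $R_{i,j}$ of Section~\ref{sec:cont_perc}---declaring rectangles \emph{good} when they realize the expected crossings and chaining good rectangles---would then yield a confined path from $(0,y)$ to such a target. Turning the qualitative uniqueness of $C_\infty$ into these quantitative confinement estimates is the delicate part; the natural scaling $e^{y/2}$ of connection lengths at height $y$ matches the $we^h$ width in the lemma, and the constant $w$ essentially counts how many macroscopic cells are needed so that a confined crossing occurs with probability at least $1-\eps/2$.
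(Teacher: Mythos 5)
Your overall architecture --- show the cluster of $(0,y)$ reaches height $h$ on percolation, then confine the witnessing path to a box --- is reasonable, and your Step~1 (unbounded $y$ on percolation) and Step~2 ($\Pee(L(y,h))\downarrow\theta$, uniform in $y\in[0,K]$ by Dini) are fine. But Step~3 contains the entire difficulty of the lemma, and as written it does not go through.

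First, your bound in (a) is insufficient for the range $\frac12<\alpha<1$: as you compute, the expected number of edges from $\{y<h\}$ to $\{y>2h\}$ inside the box is of order $w\,e^{(2-2\alpha)h}$, which tends to infinity with $h$ precisely when $\alpha<1$. Replacing $2h$ by $h+C(\eps)$ does not fix this --- the exponent stays $(2-2\alpha)h + (\frac12-\alpha)C$, which still diverges --- and ``monotonicity in $\alpha$ via Lemma~\ref{lem:superpos}'' cannot help because passing to smaller $\alpha$ only \emph{adds} points and edges. So the ``crossing through the top'' is genuinely likely for $\alpha<1$, and cannot be killed by a naive first-moment count on edges leaving the whole box.

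Second, and more seriously, (b) --- the horizontal confinement --- is left as an admitted sketch (``turning the qualitative uniqueness of $C_\infty$ into these quantitative confinement estimates is the delicate part''). You propose a density/renormalization argument over the $R_{i,j}$ rectangles, but you do not say how the boxes would be declared good, how to chain them into a confined crossing, or how this handles $\alpha=1$ (where the density of boxes with points does not improve with $i$). As it stands, the key step is missing.

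The paper's proof sidesteps both problems at once by a different decomposition. Rather than confining the infinite cluster, one looks at the component $\Ccal$ of $(0,y)$ in the graph \emph{restricted} to $[-we^h,we^h]\times[0,h]$, and splits on three events: $A_1$ = $\Ccal$ stays inside the smaller box $[-\tfrac{w}{2}e^h,\tfrac{w}{2}e^h]\times[0,h']$ with the carefully tuned height $h'=(2\alpha-1)h+\ln(w/2)$; $A_2$ = $\Ccal$ reaches $|x|>\tfrac{w}{2}e^h$; and $A_3$ = $\Ccal$ exceeds height $h'$ but stays in $|x|\le\tfrac{w}{2}e^h$. On $A_1$, if percolation holds but $T$ fails, there must be a single edge from the small box to \emph{outside} $[-we^h,we^h]\times[0,2h]$, and the choice of $h'$ makes the expected number of candidate endpoints decay like $e^{2(\alpha-\frac12)(\alpha-1)h}$, which is small for all $\frac12<\alpha\le 1$ once $h$ is large; this is where your estimate (a) should have lived, and the exponent only comes out negative because the source box is shrunk to height $h'<h$. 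On $A_2$, Lemma~\ref{lem:cross}\ref{itm:cross.i} promotes any point of $\Pcal_{\alpha,\lambda}$ in $[0,\tfrac{w}{2}e^h]\times[h,2h]$ or $[-\tfrac{w}{2}e^h,0]\times[h,2h]$ (nonempty with probability $1-e^{-\Omega(w)}$) to a confined target, without any renormalization. On $A_3$ there is a point of $\Ccal$ above height $h'$, and its ball already covers a rectangle inside $[-we^h,we^h]\times[h,2h]$ of width $\sqrt{w/2}\,e^h$, so a target exists with probability $1-e^{-\Omega(\sqrt w)}$. In short, the paper never argues confinement of $C_\infty$ at all; it exploits that the \emph{restricted} component is automatically confined, and that whichever way it escapes, the crossing lemma or a single ball turns the escape into a witness for $T$. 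You should try to adapt this idea rather than pursue the renormalization route.
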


\begin{proof} 
For notational convenience we will denote $T(y;w,h)$ simply by $T$ in the proof.
Let $E$ denote the event that there is an infinite path starting at $(0,y)$. 

Let us denote by $\Ccal$ the component of $(0,y)$ in the 
graph $\Gamma( (\Pcal_{\alpha,\lambda} \cup \{(0,y)\}) \cap [-we^h,we^h]\times[0,h])$, i.e.~the subgraph 
induced by the points inside $[-we^h,we^h]\times[0,h]$.
We set $h' := h(2\alpha-1) + \ln(w/2)$, 
and 

\[ \begin{array}{rcl}
    A_1 & := & \left\{ \Ccal \subseteq [-we^h/2,we^h/2]\times[0,h']\right\}, \\
    A_2 & := & \left\{ \exists p \in \Ccal \text{ with } |x(p)| > we^h/2 \right\}, \\
    A_3 & := & A_1^c \cap A_2^c.
   \end{array} \]

\noindent
Clearly $\Pee(A_1)+\Pee(A_2)+\Pee(A_3)=1$. As usual by new we let $E$ denote the event that $(0,y)$ belongs to an infinite component. 

In the discussion which follows, it is helpful to think of the situation where we uncover $\Pcal_{\alpha,\lambda}$ in two stages.
First we reveal only the points inside $[-we^h,we^h]\times[0,h]$ and then we reveal the rest of the points.

We first observe that if $A_1$, $E$ and $T^c$ all hold, then there must be a point of $\Pcal_{\alpha,\lambda}$ in the set $U$
of all points in $\eR \times [0,\infty) \setminus [-we^h,we^h]\times[0,2h]$ that could possibly be connected to a point in 
$[-we^h/2,we^h/2]\times[0,h']$.
We have

\[ \begin{array}{rcl} U & = & 
[-we^h,we^h]\times(2h,\infty) \cup \{ (x,y) \in \eR \times [0,\infty) : we^h < |x| < e^{\frac12(y+h')} + we^h/2 \} \\
& \subseteq & 
\{ (x,y) \in \eR \times [0,\infty) : |x| < 2e^{\frac12(y+h')}, y > (3-2\alpha)h+\ln(w/2) \},
  \end{array} \]

\noindent
using that $y=(3-2\alpha)h+\ln(w/2)$ solves the equation $e^{\frac12(y+h')} = we^h/2$, and that $(3-2\alpha)h+\ln(w/2) < 2h$ 
if $h$ is sufficiently large with respect to $w$.
Hence, for such $h$ and $w$, we have

\[ \begin{array}{rcl}
\Ee | U \cap \Pcal_{\alpha, \lambda} |  
& \leq & 
\displaystyle
2 \lambda \int_{(3-2\alpha)h+\ln(w/2)}^\infty
e^{\frac12(y+h')} e^{-\alpha y}{\dd}y \\
& = & 
\displaystyle
2\lambda e^{\frac12 h' + (\frac12-\alpha)( (3-2\alpha)h + \ln(w/2))} / \left( \alpha-\frac12 \right) \\
& = & 
2 \lambda e^{(2\alpha-1)(\alpha-1)h + (1-\alpha)\ln(w/2)} / \left( \alpha-\frac12 \right).
   \end{array}
\]

\noindent
Note that the coefficient of $h$ in the last expression is negative.
It follows that, for every $w, \eps$ we can find a $h_0 = h_0(w, \eps)$ such that 

\begin{equation}\label{eq:ETc1} 
\Pee( E\cap T^c | A_1 ) \leq \Ee | U \cap \Pcal_{\alpha, \lambda} | < \eps,  
\end{equation}

\noindent
for all $h \geq h_0(w,\eps)$.

Next, we observe that if $A_2$ holds and there is at least one point in 
$[0,we^h/2]\times [h,2h]$ and at least one point in $[-we^h/2,0]\times [h,2h]$ then 
$T$ holds by part~\ref{itm:cross.i} of Lemma~\ref{lem:cross}.
This implies that

\begin{equation}\label{eq:ETc2} 
\Pee( E \cap T^c | A_2 ) \leq \Pee( T^c | A_2 ) \leq 2 \exp[ - \lambda we^h(e^{-\alpha h}-e^{-2\alpha h})/2\alpha ]
= \exp[ - \Omega( w ) ], 
\end{equation}

\noindent
using that $\alpha \leq 1$ in the last equality.
Hence, we can choose $w$ such that this conditional probability is strictly less than $\eps$, no matter what the value of $h\geq 0$ is.

Suppose that $A_3$ holds, and let $p \in \Ccal$ be a point with $y(p) \geq h'$.
Any point in $U := [x(p)-e^{\frac12(h'+h)}, x(p)+e^{\frac12(h'+h)}] \times [h,h+1]$ will be adjacent to $y(p)$.
Note that this set is contained in $[-we^h,we^h]\times[h,2h]$, since we must have $|x(p)| \leq we^h/2$ 
and $e^{\frac12(h'+h)} = \sqrt{w/2} e^h < we^h/2$.
It thus follows that

\begin{equation}\label{eq:ETc3} 
\begin{array}{rcl} 
\Pee( E \cap T^c | A_3 ) 
& \leq & 
\Pee( T^c | A_3 )  \\
& \leq & 
\exp[ - 2\lambda e^{\frac12(h'+h)} e^{-\alpha h}(1 - e^{-\alpha})/\alpha ] \\
& = &
\exp[ - \Omega( e^{\frac12\ln(w/2)} ) ]  \\
&  = & 
\exp[ - \Omega( \sqrt{w} ) ],
\end{array} \end{equation}

\noindent
Again, we can choose $w$ such that this conditional probability is at most $\eps$, no matter what the value of $h\geq 0$ is.

Combining~\eqref{eq:ETc1},~\eqref{eq:ETc2} and~\eqref{eq:ETc3}, we find that

\[ \Pee( E \cap T^c )
 = \Pee( E \cap T^c | A_1 )\Pee(A_1) + \Pee( E \cap T^c | A_2 )\Pee(A_2) + \Pee( E \cap T^c | A_3 )\Pee(A_3) \leq \eps,
\]

\noindent
for $w$ sufficiently large and all $h \geq h_0(w,\eps)$.
This in turn gives that, for such pairs $w,h$: 

\[ \begin{array}{rcl} 
\Pee( T ) & \geq & 
\Pee( E \cap T ) \\
& = &
\Pee( E ) - \Pee( E \cap T^c ) \\
& \geq & 
\Pee(E) - \eps \\
& = & 
\theta(y;\alpha,\lambda) - \eps.
\end{array} \]

\noindent
To conclude the proof, we simply note that each of the bounds~\eqref{eq:ETc1},~\eqref{eq:ETc2} and~\eqref{eq:ETc3} holds uniformly over all
$y \leq h_0$.
\end{proof}

Let $C_{w,h}$ denote the event that there exists a path in $\Gamma(\Pcal_{\alpha,\lambda})$ starting
at a point of $\Pcal_{\alpha,\lambda} \cap [-we^h,-(w-1)e^h] \times [0,h]$ and ending at a point of
$\Pcal_{\alpha,\lambda} \cap \{ [(w-1)e^h, we^h] \times[0,h]\}$, with all its points having $y$-coordinate at most $h$.
See Figure~\ref{fig:pad} for a depiction.
\begin{figure}[!h]
\begin{center}
\input{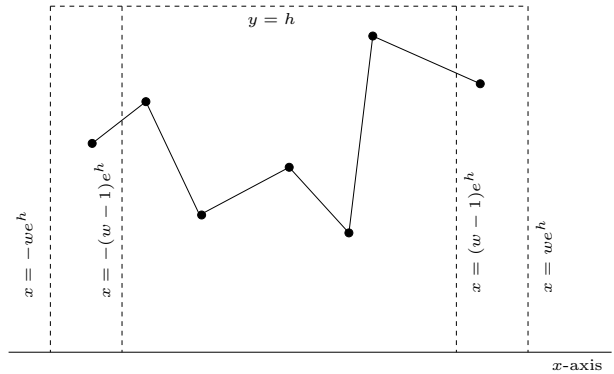}
\end{center}
\caption{Depiction of the event $C_{w,h}$ (not to scale).\label{fig:pad}}
\end{figure}
We will show that if the parameters $\alpha$ and $\lambda$ are such that percolation occurs with probability 1, then 
as $h$ grows the probability of $C_{w,h}$ converges to 1. 
To this end, we will need the following lemma which states that the infinite component extends in all directions indefinitely. 

\begin{lemma}\label{lem:percogoesleftright}
 Suppose that $\alpha,\lambda$ is such that $\Pee_{\alpha,\lambda}(\text{percolation} ) = 1$.
 Then, for every vertical line $\ell$, the unique infinite component
 contains points on either side of $\ell$.
\end{lemma}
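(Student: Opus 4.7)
The plan is to combine the $0$-$1$ law for horizontally translation-invariant events (as used in Theorem~\ref{lem:perc01}) with reflection symmetry in the $x$-variable. By Lemma~\ref{lem:therecanbeonlyone} and the hypothesis $\Pee_{\alpha,\lambda}(\text{percolation}) = 1$, almost surely there is a unique infinite component, which I denote by $\Ccal_\infty$.

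Introduce the events
\[ R := \bigl\{ \sup\{ x(p) : p \in \Ccal_\infty \} = +\infty \bigr\}, \qquad
   L := \bigl\{ \inf\{ x(p) : p \in \Ccal_\infty \} = -\infty \bigr\}. \]
Both $R$ and $L$ are invariant under horizontal translations of $\Pcal_{\alpha,\lambda}$: translating the whole configuration by $a$ translates the unique infinite component by $a$, which leaves the supremum/infimum of $x$-coordinates equal to $\pm\infty$ unchanged. The ergodicity argument invoked in the proof of Theorem~\ref{lem:perc01} (via~\cite{MeesterRoy96}, Proposition 2.6) therefore gives $\Pee(R), \Pee(L) \in \{0,1\}$. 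Moreover, the intensity $\lambda e^{-\alpha y}$ and the adjacency rule $|x_i - x_j| < e^{(y_i+y_j)/2}$ are both invariant under the reflection $x \mapsto -x$, which swaps $R$ and $L$, so $\Pee(R) = \Pee(L)$.

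It remains to rule out $\Pee(R) = \Pee(L) = 0$. If this held, then a.s.~$\Ccal_\infty$ would be contained in $[-n,n]\times[0,\infty)$ for some (random) $n \in \eN$. However, for each fixed $n$, the number $|[-n,n]\times[0,\infty) \cap \Pcal_{\alpha,\lambda}|$ is Poisson with mean $\int_0^\infty 2n\lambda e^{-\alpha y}{\dd}y = 2n\lambda/\alpha < \infty$, and so is a.s.~finite. Taking a union bound over $n \in \eN$, almost surely every such strip contains only finitely many vertices of $\Pcal_{\alpha,\lambda}$, contradicting $|\Ccal_\infty| = \infty$.

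Hence $\Pee(R) = \Pee(L) = 1$, so almost surely $\Ccal_\infty$ has points with $x$-coordinate exceeding any prescribed value and also points with $x$-coordinate below any prescribed value. In particular, for every vertical line $\ell$, the unique infinite component contains points on either side of $\ell$, as desired. I do not foresee any serious obstacle: the only subtlety is verifying that the translation-invariance/reflection-symmetry arguments of Theorem~\ref{lem:perc01} apply verbatim to the events $R$ and $L$, which is immediate from their definitions.
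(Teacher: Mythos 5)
Your proof is correct. It is closely related to the paper's proof in that both hinge on the horizontal translation invariance of $\Pcal_{\alpha,\lambda}$ and on the fact that a subset of $\Pcal_{\alpha,\lambda}$ that is bounded in the $x$-direction is a.s.~finite (since each strip $[-n,n]\times[0,\infty)$ contains a Poisson($2n\lambda/\alpha$) number of points). However, the routes differ. The paper works directly with the nested events $E_n := \{\Ccal_\infty \subseteq \{x\geq n\}\}$: translation invariance gives $\Pee(E_n) = \Pee(E_0)$ for all $n$, while $\bigcap_n E_n = \emptyset$ forces $\lim_n \Pee(E_n) = 0$, so $\Pee(E_n) \equiv 0$; a union bound over $n \in \Zed$ plus reflection symmetry then finishes. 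This avoids invoking the $0$--$1$ law. You instead package ``unbounded to the right/left'' as the translation-invariant events $R$, $L$, deduce $\Pee(R), \Pee(L) \in \{0,1\}$ from the same $0$--$1$ law used in Theorem~\ref{lem:perc01}, equate them via reflection symmetry, and rule out the all-zero case by the finiteness-of-strips argument. Both are sound; the paper's version uses slightly less machinery (no explicit appeal to the ergodic $0$--$1$ law for these new events), whereas yours is perhaps more conceptual in isolating $R$ and $L$ as the natural tail events. One small remark: the reflection-symmetry step in your argument is convenient but not strictly necessary, since if exactly one of $\Pee(R), \Pee(L)$ were zero, say $\Pee(R)=0$, then the a.s.~finite random variable $M := \sup\{x(p): p\in\Ccal_\infty\}$ would have a distribution invariant under all horizontal shifts, which is impossible for a finite random variable.
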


\begin{proof}
We define 

\[ E_n := \{\text{the unique infinite component is contained in the halfspace $\{x\geq n\}$}\}. \]

\noindent
 Let us write $c := \Pee( E_0 )$.
 Since the model is invariant under horizontal translations, we have $\Pee( E_n ) = \Pee( E_0 )$ for all $n \in \Zed$.
 Hence
 
 \[ c = \lim_{n\to\infty} \Pee( E_n ) = \Pee\left( \bigcap_{n\in\eN} E_n\right) = 0, \]
 
 \noindent
 as clearly $\bigcap_n E_n = \emptyset$.
 So $\Pee( E_n ) = 0$ for all $n$.  \\
 In other words, almost surely, for every vertical line $\ell$, the infinite component contains points to the left of $\ell$.
 By symmetry, it also contains points to the right of every $\ell$ almost surely.
\end{proof}

%
%

We can now proceed with the proof of the statement regarding $C_{w,h}$.

\begin{lemma}\label{lem:Ch}
 Let $\alpha,\lambda > 0$ be such that $\Pee_{\alpha,\lambda}(\text{percolation}) =1$.
 For every (fixed) $w>1$, we have $\lim_{h\to\infty} \Pee_{\alpha,\lambda}( C_{w,h} ) = 1$.
\end{lemma}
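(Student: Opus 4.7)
My plan is to build the left-right crossing in $C_{w,h}$ in stages, treating $\alpha<1$ and $\alpha=1$ slightly differently. The common idea is to produce a path with endpoints in $I_L:=[-we^h,-(w-1)e^h]\times[0,h]$ and $I_R:=[(w-1)e^h,we^h]\times[0,h]$ whose vertical excursions remain bounded by $h$, either via a top-level ``chain of active rectangles'' or by joining two locally-built upward paths by a bridge near the top.

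For $\alpha<1$ the construction is a single union bound over the rectangle dissection~(\ref{def:Rij}). Take $i_0=\lfloor h/\ln 2\rfloor-1$, so that all rectangles $R_{i_0,j}$ sit inside $\eR\times[0,h]$ and have width $\Theta(e^h)$. By~(\ref{eq:RijExpect}) each such rectangle has expected Poisson count $\Theta(e^{(1-\alpha)h})\to\infty$, hence is active except with probability $\exp(-\Theta(e^{(1-\alpha)h}))$. Only $O(w)$ such rectangles cover $x\in[-we^h,we^h]$, so a union bound gives that with probability $1-o(1)$ they are simultaneously active. Lemma~\ref{eq:Rij} then guarantees that the points of $\Pcal_{\alpha,\lambda}$ in consecutive active rectangles are pairwise adjacent in $\Gamma$, yielding a horizontal chain realizing $C_{w,h}$---by choosing the leftmost and rightmost indices appropriately, those rectangles are geometrically contained in the narrow strips.

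For $\alpha=1$ each top-level rectangle has only constant expected count and the previous union bound fails, so I would use Lemma~\ref{lem:T} combined with sprinkling. Apply Lemma~\ref{lem:T} with $K=1$ and small $\eps>0$ to obtain $w^*,H_0$ with $\Pee(T(y;w^*,H))\geq\theta(y;1,\lambda)-\eps\geq p_0>0$ for $y\in[0,1]$ and $H\geq H_0$; converting the inserted-point statement to one about actual Poisson points via Slivnyak--Mecke and horizontal translation invariance yields, for each anchor $x_0$, an event of probability at least some $q_0>0$ that some Poisson point in $[x_0,x_0+1]\times[0,1]$ is connected within $[x_0-w^*e^H,x_0+w^*e^H]\times[0,2H]$ to a point of height $\in[H,2H]$. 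With $H=h/2$ (so $2H\leq h$), place $\Theta(e^{h/2})$ such anchors in $I_L$ with pairwise disjoint buffered supports; independence across anchors then gives that at least one succeeds with probability $1-(1-q_0)^{\Theta(e^{h/2})}\to1$, producing a point $q_L$ at height in $[h/2,h]$ joined inside the box to $I_L$ and, symmetrically, a point $q_R$ for $I_R$. To link $q_L$ and $q_R$ across the upper half of the box, I would sprinkle: by the percolation hypothesis and Lemma~\ref{lem:lambdacexists} pick $\lambdac<\lambda'<\lambda$, and by Lemma~\ref{lem:superpos} write $\Pcal_{1,\lambda}=\Pcal_{1,\lambda'}\cup\Pcal'$ with an independent copy $\Pcal'\sim\Pcal_{1,\lambda-\lambda'}$. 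Perform the upward stage using only the supercritical subprocess $\Pcal_{1,\lambda'}$, and use the independent sprinkled points $\Pcal'$, together with Lemma~\ref{lem:percogoesleftright}, to fill in a top bridge inside $[-we^h,we^h]\times[h/2,h]$.

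The main obstacle is clearly this top-bridge construction when $\alpha=1$: it is essentially a box-crossing / RSW-type statement for a long thin rectangle in the supercritical continuum model, which is delicate even for classical Bernoulli percolation. The sprinkling combined with a renormalization on the rectangle graph $\Rscr$ from the proof of Lemma~\ref{lem:perc1} is where I expect the bulk of the technical work to live, and is the step I would scrutinize most carefully on a second pass.
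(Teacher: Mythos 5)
Your treatment of $\alpha<1$ is essentially the same as the paper's: both proofs place a row of boxes of width $\Theta(e^h)$ just below height $h$, observe that each has expected Poisson content $\Theta(e^{(1-\alpha)h})\to\infty$, union bound over $O(w)$ of them, and invoke the adjacency-of-adjacent-boxes observation. That part is fine.

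For $\alpha=1$ you have a genuine gap, which you yourself flag: the ``top bridge'' joining $q_L$ to $q_R$ inside the upper half of the strip. Sprinkling plus Lemma~\ref{lem:percogoesleftright} tells you that the infinite component (of the sparser process) exists and extends indefinitely in both directions, but it does \emph{not} give you a crossing confined to a rectangle of height $h$; producing such a confined crossing for a long thin box is precisely the RSW-type statement that you correctly identify as the hard part, and nothing in your outline supplies it. As written, the $\alpha=1$ argument does not close.

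The paper's proof avoids this obstruction entirely, via a softer observation. Let $E_h$ be the event that the infinite component has a point in $[-h,h]\times[0,h]$ as well as points with $x<-we^h$ and $x>we^h$; by Lemma~\ref{lem:percogoesleftright}, $\Pee(E_h)\to 1$. Now let $A_h$ be the set of points with $y\geq h$ that could possibly be adjacent to something in $[-we^h,we^h]\times[0,h]$, i.e.\ $A_h=\{(x,y):y\geq h,\ |x|\leq we^h+e^{(h+y)/2}\}$. The crucial point, special to $\alpha=1$, is that $\Ee|\Pcal_{1,\lambda}\cap A_h|$ is a constant independent of $h$ (the exponential decay $e^{-y}$ exactly beats the width growth $e^{y/2}$ plus the $we^h\cdot e^{-h}$ term). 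Hence the event $F_h=\{\Pcal\cap A_h=\emptyset\}$ has a probability bounded away from zero uniformly in $h$. On $E_h\cap F_h$, any path of the infinite component between the far-left and far-right points, passing through $[-h,h]\times[0,h]$, is forced to stay at height $\leq h$ while its $x$-coordinate is in $[-we^h,we^h]$ (otherwise it would use a point of $A_h$), and since consecutive $x$-jumps are $\leq e^h$, it must visit both $[-we^h,-(w-1)e^h]\times[0,h]$ and $[(w-1)e^h,we^h]\times[0,h]$; thus $E_h\cap F_h\subseteq C_{w,h}$. Since $C_{w,h}$ depends only on points with $y\leq h$ while $F_h$ depends only on $A_h\subseteq\{y\geq h\}$, the two are independent, giving $\Pee(C_{w,h})\Pee(F_h)\geq\Pee(E_h)-(1-\Pee(F_h))$ and hence $\Pee(C_{w,h})\to 1$. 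The lesson: you do not need to \emph{build} a confined crossing; you only need to \emph{condition away} the (constant-sized) region that could pull the already-existing crossing out of the box.
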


\begin{proof}
%
We begin with the case where $\alpha < 1$. Here, we will show something stronger: with probability approaching 1 as $h\to \infty$
there is a path between two points of $\Pcal_{\alpha,\lambda}$ whose $x$-coordinates belong to $[-we^h,-(w-1)e^h]$ and $[(w-1)e^h,we^h]$, 
respectively, and the remaining points have $y$-coordinates that are between $h-1$ and $h$.  
To this end, we define a collection of boxes $B_i=[i e^{h-1}/2, (i+1)e^{h-1}/2) \times [h-1,h]$, for $i=-\lfloor 2we \rfloor, \ldots, 
\lfloor 2we\rfloor$. Note that the leftmost and the rightmost boxes are such that the points of both have $x$-coordinates which 
belong to $[-we^h,-(w-1)e^h]$ and $[(w-1)e^h,we^h]$, respectively.

We will show that 1) any points of $\Pcal_{\alpha, \lambda}$ that belong to adjacent boxes must be also adjacent
and 2) with high probability all boxes contain at least one point.   

To show 1), consider a point $p_1 \in B_0$ and a point $p_2 \in B_1$. Then $|x(p_1) - x(p_2)|\leq 2 e^{h-1}/2 = e^{h-1}\leq 
e^{\frac{y(p_1)+y(p_2)}{2}}$, since $y(p_1),y(p_2)\geq h-1$. 

Part 2) follows from a simple calculation. We have 
\begin{equation*}
\begin{split}
\Ee |B_0 \cap \Pcal_{\alpha, \lambda}| &= \lambda \int_{-(h-1)}^{h} \int_0^{e^{h-1}/2} e^{-\alpha y} dx dy 
= \frac{\lambda}{2e} e^h \int_{-(h-1)}^{h} e^{-\alpha y} dy\\ 
& =\frac{\lambda}{2e\alpha} e^h (e^{-\alpha(h-1)} - e^{\alpha h})= \frac{\lambda}{2e\alpha} e^{(1-\alpha)h} (e^{\alpha} - 1),
\end{split}
\end{equation*}
whereby 
$$ \Pee_{\alpha, \lambda} (|B_0 \cap \Pcal_{\alpha, \lambda}| = 0) = 
\exp \left( - \frac{\lambda (e^{\alpha} - 1)}{2e\alpha} e^{(1-\alpha)h}  \right).$$
Hence, provided $\alpha < 1$, the probability that at least one of these boxes does not contain a point is at most 
$(4we +1) \exp \left( - \frac{\lambda (e^{\alpha} - 1)}{2e\alpha} e^{(1-\alpha)h}  \right) \to 0, \ \mbox{as $h\to \infty$}$. 

We now focus on the case where $\alpha =1$. 
Let $E_h$ denote the event that $[-h,h]\times[0,h]$ contains a point of the infinite component of $\Gamma_{\alpha,\lambda}$, and this component
has a point $p_1$ with $x(p_1) < -w e^h$ and a point $p_2$ with $x(p_2) > w e^h$.
By Lemma~\ref{lem:percogoesleftright}, we have $\bigcup_h E_h = \{\text{percolation}\}$.
So, in particular,
\begin{equation}\label{eq:Ehlim} 
\lim_{h\to\infty} \Pee_{\alpha, \lambda} ( E_h ) = 1.  
\end{equation}

\noindent
Define $A_h := \{ (x,y) : y \geq h, |x| \leq w e^h + e^{\frac12(h+y)}\}$.
That is, $A_h$ is the set of all points with $y$-coordinate at least $h$ that could be adjacent to 
some point in $[-w e^h,w e^h]\times[0,h]$.
Similarly to what we did in the proof of Lemma~\ref{lem:thetactsabove}, we compute:

\[ \begin{array}{rcl} 
\Ee |\Pcal_{1,\lambda}\cap A_h|
& = & \lambda \int_{A_h} e^{-\alpha y}{\dd}x{\dd}y \\
& = & \lambda \int_h^\infty\int_{-(we^h+e^{\frac12(h+y)}}^{(we^h+e^{\frac12(h+y)}} e^{- y}{\dd}x{\dd}y \\
& = & \lambda \left( 2w e^h \int_h^\infty e^{-y}{\dd}y + 2 e^{h/2} \int_h^\infty e^{-y/2}{\dd}y \right) \\
& = & (2w+1) \lambda.
\end{array} \]

\noindent
Let $F_h$ denote the event that the area $A_h$ does not contain any point of $\Pcal_{\alpha, \lambda}$.
Then we have

\[ \Pee_{1,\lambda}( F_h ) = e^{-(2w+1) \lambda}. \]

\noindent
Let us also remark that $E_h \cap F_h \subseteq C_{w,h}$. 
(If $E_h\cap F_h$ holds, then there is a path with all $y$-coordinates at most $h$, between
a vertex $p_1$ with $ x ( p_1)< we^h$ and a vertex $p_2$ with $x(p_2) >we^h$. 
The $x$-coordinates of any two adjacent vertices of this path differ by no more than 
$e^h$, so there must be points in $[-we^h,-(w-1)e^h]\times[0,h]$ and $[(w-1)e^h,we^h]\times[0,h]$.)
Observe that $C_{w,h}$ and $F_h$ are independent, since they depend on the points in disjoint parts of the plane.
Thus,

\[ \begin{array}{rcl}
\Pee(C_{w,h}) e^{-(2w+1) \lambda} & = &  
\Pee(C_{w,h}) \Pee(F_h) \\
& = & \Pee(C_{w,h} \cap F_h) \\ 
& \geq & \Pee( E_h \cap F_h ) \\
& \geq & \Pee( E_h ) - (1-\Pee(F_h)) \\
& = & \Pee( E_h ) + e^{-(2w+1) \lambda} - 1. 
\end{array} \]

\noindent
That $\lim_{h\to\infty} \Pee( C_{w,h} ) = 1$ 
now follows immediately from~\eqref{eq:Ehlim}.
\end{proof}

\subsection{Approximating the KPKVB model and the proof of Theorem~\ref{thm:main}}
We are now ready to prove Theorem~\ref{thm:main}, by establishing the link between the continuum percolation model
in the previous section and the KPKVB model.
It only remains to show that, for all $\alpha, \nu > 0$, we have that
$|\Cscr_{(1)}|/N \to c(\alpha,\nu), |\Cscr_{(2)}| /N\to 0$ in probability, where $\Cscr_{(1)}$ and $\Cscr_{(2)}$ denote the largest and the 
second largest component of $G(N;\alpha,\nu)$. 
Note that for $\alpha > 1$, we have already proved this in our ealier paper~\cite{BFMgiantEJC}.
Let us also remark that, since $c$ is continuous for $\alpha < 1$ and $c=1$ for $\alpha \leq 1/2$, by the monotonicity in $\alpha$ of 
$G=G(N;\alpha,\nu)$ (see Lemma~1.2 from~\cite{BFMgiantEJC}) it suffices to consider only the 
case $\alpha > \frac12$.
In the remainder of this section we shall thus always assume that $1/2 <\alpha \leq 1$.

\smallskip

Let $\GPo = \GPo(N;\alpha,\nu)$ denote the random graph which is defined just as the original KPKVB-model $G = G(N;\alpha,\nu)$ 
with the only difference that now we drop $Z \isd \Po(N)$ points onto the hyperbolic plane according
to the $(\alpha, R)$-quasi uniform distribution, where $Z$ is of course independent of the locations of these points.
Note that this also gives a natural coupling between $G$ and $\GPo$ : if $X_1, X_2, \dots$ is an infinite 
supply of points taken i.i.d.~according to the $(\alpha, R)$-quasi uniform distribution, then $G$
has vertex set $\{X_1,\dots, X_N\}$ while $\GPo$ has vertex-set $\{X_1,\dots, X_Z\}$. 
The following lemma shows it is enough to show Theorem~\ref{thm:main} with $\GPo$ in place of $G$.

\begin{lemma}\label{lem:PoissonApproxIsOk}
Suppose there is a constant $t$ such that $|\Cscr_{(1)}(\GPo)| = (t+o_{p}(1))N$ and $|\Cscr_{(2)}(\GPo)| = o_{p}(N)$.
Then also $|\Cscr_{(1)}(G)| = (t+o_p(1))N$ and $|\Cscr_{(2)}(G)| = o_{p}(N)$.
\end{lemma}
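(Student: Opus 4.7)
The plan is a standard de-Poissonization, exploiting the concentration of $Z\sim\Po(N)$: by Chebyshev (or a Chernoff bound), $|Z-N|\le\sqrt{N}\log N$ with probability $1-o(1)$. Under the coupling in the lemma, let $H_k$ denote the KPKVB-style graph built on the first $k$ points $X_1,\ldots,X_k$, using the same threshold $R=2\log(N/\nu)$, so that $G=H_N$, $\GPo=H_Z$, and $H_a$ is an induced subgraph of $H_b$ whenever $a\le b$. Writing $A:=\min(N,Z)$ and $B:=\max(N,Z)$, both $G$ and $\GPo$ lie in $\{H_A,H_B\}$.

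It suffices to show that on the high-probability event $\Omega_N:=\{B-A\le\sqrt{N}\log N\}$,
\begin{equation*}
\bigl||\Cscr_{(1)}(H_A)|-|\Cscr_{(1)}(H_B)|\bigr|=o_p(N), \qquad \max\bigl(|\Cscr_{(2)}(H_A)|,|\Cscr_{(2)}(H_B)|\bigr)=o_p(N).
\end{equation*}
Given this, the conclusion is immediate: $\GPo$ is one of $H_A,H_B$ and satisfies the hypothesis, so the displayed bounds pass the two estimates to the other element of the pair, and hence to $G$.

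The proof of the displayed bounds goes through the set $U:=V(H_B)\setminus V(H_A)$, which has $|U|\le\sqrt{N}\log N$ on $\Omega_N$. Since $H_A=H_B[V(H_A)]$, every component of $H_B$ intersects $V(H_A)$ in a disjoint union of components of $H_A$. Applied to $\Cscr_{(1)}(H_B)$ this yields
\begin{equation*}
|\Cscr_{(1)}(H_B)|\le |\Cscr_{(1)}(H_A)|+(m-1)\,|\Cscr_{(2)}(H_A)|+|U|,
\end{equation*}
where $m-1$ is the number of non-giant $H_A$-components merged into $\Cscr_{(1)}(H_B)$ via $U$. The symmetric lower bound $|\Cscr_{(1)}(H_A)|\ge|\Cscr_{(1)}(H_B)|-o_p(N)$ comes from the same decomposition applied to $\Cscr_{(1)}(H_B)\setminus U$, whose largest surviving piece must lie inside $\Cscr_{(1)}(H_A)$. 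The bound on $\Cscr_{(2)}$ is analogous, applied to the second-largest component of whichever of $H_A,H_B$ plays the role of the ``larger'' graph.

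The main obstacle is controlling the absorbed non-giant mass $(m-1)\,|\Cscr_{(2)}(H_A)|$. Bounding $m-1$ by the number of $H_B$-edges from $U$ to $V(H_A)$, and using the bounded-average-degree estimate of Gugelmann et al.~\cite{ar:Gugel}, yields $m=O_p(|U|)=O_p(\sqrt{N}\log N)$; but then the naive product $O_p(\sqrt N\log N)\cdot o_p(N)$ is a priori only $o_p(N^{3/2}\log N)$, which is not good enough. The crux of the argument is therefore a sharper bookkeeping of the total absorbed mass $\sum_{i\ge 2}|C_i|$, ideally bounded directly by $\sum_{u\in U}\deg_{H_B}(u)=O_p(|U|)$ by exploiting that most $H_A$-components adjacent to $U$ are of bounded size, a property inherited from the local continuum-percolation approximation of Section~\ref{sec:cont_perc}. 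Once this sharper estimate is in place, the rest of the de-Poissonization is routine.
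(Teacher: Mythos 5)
Your proposal identifies the right coupling, but it leaves a genuine gap that you yourself flag: controlling the total mass of non-giant $H_A$-components absorbed into the giant of $H_B$. Each absorbed component has size at most $|\Cscr_{(2)}(H_A)| = o_p(N)$, but there can be $\Theta_p(\sqrt{N}\log N)$ of them, and the naive product is not $o_p(N)$. Your suggested fix --- that most of these components have bounded size, inherited from the local continuum approximation --- is not proved, and nothing in the paper supplies such an estimate in the form you need. A further issue is that the hypothesis only gives $|\Cscr_{(2)}(\GPo)| = o_p(N)$, i.e.\ a bound on one of $H_A,H_B$ (whichever one is $\GPo$), so you would first have to transfer the $\Cscr_{(2)}$-bound across the pair before using it in the bookkeeping, which is circular as written. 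As it stands, the argument is incomplete.

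The paper's proof avoids all of this. It only uses (a) the monotonicity of $|\Cscr_{(1)}(\cdot)|$ and of $|\Cscr_{(1)}(\cdot)| + |\Cscr_{(2)}(\cdot)|$ under adding vertices, and (b) the facts $\Pee(Z\ge N)=\tfrac12+o(1)$ and $\Pee(Z\le N)=\tfrac12+o(1)$, with $Z$ independent of the point locations. If, say, $\Pee\bigl(|\Cscr_{(1)}(G)|/N>(1+\eps)t\bigr)$ had positive limsup for some $\eps>0$, then since $\GPo\supseteq G$ on the independent event $\{Z\ge N\}$, the event $\{|\Cscr_{(1)}(\GPo)|/N>(1+\eps)t\}$ would have limsup at least half as large, contradicting $|\Cscr_{(1)}(\GPo)|/N\to t$ in probability. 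The lower bound is symmetric (using $Z\le N$ and $\GPo\subseteq G$), and the claim about $\Cscr_{(2)}$ follows by running the same contradiction for $|\Cscr_{(1)}|+|\Cscr_{(2)}|$. No quantitative control of $|Z-N|$, and no analysis of how components merge, is required. You should switch to this monotonicity-plus-contradiction route rather than trying to make the additive error bookkeeping work.
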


\begin{proof}
Aiming for a contradiction, suppose that $\limsup_{N \to \infty} \Pee( |\Cscr_{(1)}(G)| > (1+\eps)t ) > 0$ for some $\eps>0$.
Recall that $\Pee( Z \geq N ) = 1/2 + o(1)$ (by the central limit theorem, for example) and observe that
whenever $Z \geq N$ we have that $\GPo \supseteq G$ (under the natural coupling specified just before the statement of this lemma).
But then we also have%
\[ \limsup_{N \to \infty} \Pee \left( \frac{|\Cscr_{(1)}(\GPo)|}{N} > (1+\eps)t \right) \geq 
\limsup_{N \to \infty} (1/2+o(1)) \cdot  \Pee\left( \frac{|\Cscr_{(1)}(G)|}{N} > (1+\eps)t \right) > 0, \]

\noindent
a contradiction!

Completely analogously, we cannot have that $\limsup_{N\to \infty} 
\Pee( |\Cscr_{(1)}(G)|\cdot N^{-1} < (1-\eps)t ) > 0$ for some $\eps>0$.
Applying the same argument to compare $|\Cscr_{(1)}(\GPo) \cup \Cscr_{(2)}(\GPo)|$ to 
$|\Cscr_{(1)}(G) \cup \Cscr_{(2)}(G)|$ completes the proof.
\end{proof}

In the remainder of this section, we will thus restrict attention to proving Theorem~\ref{thm:main} with $\GPo$ in place of
$G$ (under the additional assumption that $1 \geq \alpha > \frac12$).

Next we define a correspondence between the continuum percolation model $\Gamma_{\alpha,\lambda}$ and 
$\GPo$.
Let us define $\Psi : [0,R]\times (-\pi,\pi] \to (-\frac{\pi}{2} e^{R/2}, \frac{\pi}{2} e^{R/2}] \times [0,R]$ by:

\[ \Psi : (r,\vartheta) \mapsto (\vartheta \cdot \frac{e^{R/2}}{2} , R-r). \]

\noindent
We let $V$ denote the vertex set of $\GPo$, and we let 
$\Vtil$ denote $\Pcal_{\alpha,\nu\alpha/\pi} \cap [-\frac{\pi}{2} e^{R/2}, \frac{\pi}{2} e^{R/2}]\times[0,R]$.
Let us denote by $\Gammatil$ the graph with vertex set $\Vtil$ and an 
edge between $(x,y), (x',y') \in\Vtil$ if and only if $|x-x'|_{\pi e^{R/2}} \leq e^{\frac12(y+y')}$.
(In other words, $\Gammatil$ is the supergraph of $\Gamma_{\alpha,\lambda}$ induced on $\Vtil$, together with some extra edges 
for ``wrap around''.)

\begin{lemma}\label{lem:coupling}
There exists a coupling such that, a.a.s., 
$\Vtil = \Psi(V)$.
\end{lemma}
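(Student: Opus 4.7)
The plan is to construct $V$ and $\Pcal_{\alpha,\nu\alpha/\pi}$ on a common probability space by thinning a single unit-intensity Poisson process (in the spirit of Lemma~\ref{lem:couplingPoisson}), and to bound the expected symmetric difference $|\Psi(V) \triangle \Vtil|$ via an $L^1$ estimate on the difference of intensities over the rectangle $W := [-\tfrac{\pi}{2} e^{R/2}, \tfrac{\pi}{2} e^{R/2}] \times [0, R]$. Since $V$ consists of $Z \sim \Po(N)$ i.i.d.\ points from the $(\alpha,R)$-quasi uniform distribution, $V$ is a Poisson point process on $\Dcal_R$ with intensity $\frac{N\alpha \sinh(\alpha r)}{2\pi(\cosh(\alpha R) - 1)}$ in polar coordinates. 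The inverse map $\Psi^{-1} : (x,y) \mapsto (R-y,\, 2x/e^{R/2})$ has Jacobian $2/e^{R/2}$, so using $e^{R/2} = N/\nu$ the change-of-variables formula yields that $\Psi(V)$ is a Poisson process on $W$ with intensity
\[ \tilde\lambda_N(x,y) \;=\; \frac{N \alpha \sinh(\alpha(R-y))}{\pi e^{R/2}(\cosh(\alpha R)-1)} \;=\; \frac{\nu\alpha \sinh(\alpha(R-y))}{\pi(\cosh(\alpha R)-1)}. \]

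Next I would compare $\tilde\lambda_N$ with $f_{\alpha,\nu\alpha/\pi}(x,y) = \tfrac{\nu\alpha}{\pi}e^{-\alpha y}$. Using the identity $\sinh(\alpha(R-y)) = \sinh(\alpha R)\cosh(\alpha y) - \cosh(\alpha R)\sinh(\alpha y)$ together with $\sinh(\alpha y) + e^{-\alpha y} = \cosh(\alpha y)$, a short manipulation gives
\[ \tilde\lambda_N(x,y) - f_{\alpha,\nu\alpha/\pi}(x,y) \;=\; \frac{\nu\alpha}{\pi} \cdot \frac{e^{-\alpha y} - e^{-\alpha R}\cosh(\alpha y)}{\cosh(\alpha R)-1}. \]
Both $\int_0^R e^{-\alpha y}\,dy$ and $\int_0^R e^{-\alpha R}\cosh(\alpha y)\,dy$ are bounded uniformly in $R$ (by $1/\alpha$ and $1/(2\alpha)$ respectively). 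Multiplying by the width $\pi e^{R/2}$ of $W$ and using $\cosh(\alpha R)-1 \sim \tfrac12 e^{\alpha R}$ then gives
\[ \int_W \bigl|\tilde\lambda_N(x,y) - f_{\alpha,\nu\alpha/\pi}(x,y)\bigr| \,dx\,dy \;=\; O\bigl(e^{(1/2-\alpha)R}\bigr) \;=\; o(1), \]
since we are in the regime $\alpha > 1/2$ and $R = 2\log(N/\nu) \to \infty$.

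For the coupling itself I would proceed as in the proof of Lemma~\ref{lem:couplingPoisson}: take a unit-intensity Poisson process $\Qcal$ on $\eR^2 \times (0,\infty)$ and set
\[ \Pcal_{\alpha,\nu\alpha/\pi} \;:=\; \pi\bigl[\{(x,y,z) \in \Qcal \,:\, z \leq f_{\alpha,\nu\alpha/\pi}(x,y)\}\bigr], \]
\[ \Psi(V) \;:=\; \pi\bigl[\{(x,y,z) \in \Qcal \,:\, (x,y) \in W,\ z \leq \tilde\lambda_N(x,y)\}\bigr], \]
(where $\pi$ is projection onto the first two coordinates), recovering $V := \Psi^{-1}(\Psi(V))$; then $\Vtil = \Pcal_{\alpha,\nu\alpha/\pi} \cap W$ and both marginals have the correct distributions by Poisson thinning. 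The cardinality of $\Psi(V) \triangle \Vtil$ is Poisson-distributed with mean equal to the $L^1$ integral computed above, so $\Pee(\Psi(V) \neq \Vtil) = o(1)$, which is the claim.

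The main obstacle is really just this scaling calculation: the rectangle $W$ has width growing like $e^{R/2}$, so $L^1$-closeness of the intensities on $W$ demands that their pointwise difference decay fast enough to offset this growth. The threshold $\alpha > 1/2$ is exactly what makes the resulting integral vanish, matching the regime announced just before the statement of this lemma.
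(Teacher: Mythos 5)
Your proof is correct and follows essentially the same approach as the paper: map $V$ to the half-plane via $\Psi$, use the mapping theorem to compute the resulting intensity, compare with $f_{\alpha,\nu\alpha/\pi}$, and construct a coupling whose symmetric difference is Poisson with mean equal to the $L^1$ distance of the intensities over the box, which is $O(e^{(1/2-\alpha)R}) = o(1)$. The only cosmetic differences are that you derive a closed-form expression for the pointwise difference of intensities where the paper works with an $O(\cdot)$-expansion, and that you realise the coupling by thinning a common unit-intensity Poisson process on $\eR^2\times(0,\infty)$ whereas the paper superposes three independent processes $\Pcal_0,\Pcal_1,\Pcal_2$ built from $f_{\min}=\min\{f_{\Psi(V)},f_{\Vtil}\}$; both yield the same Poisson bound on $|\Psi(V)\triangle\Vtil|$.
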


\begin{proof}
$V$ constitutes a Poisson process on $(-\pi,\pi]\times[0,R]$ with intensity function:

\[ f_V(r,\vartheta) := N \cdot \frac{\alpha\sinh(\alpha r)}{\cosh(\alpha R)-1} \cdot \frac{1}{2\pi}
= 
\frac{\nu\alpha}{2\pi} \cdot e^{R/2} \cdot \frac{\alpha\sinh(\alpha r)}{\cosh(\alpha R)-1}. \]

\noindent
By the mapping theorem (see~\cite{KingmanBoek}, page 18), $\Psi(V)$ is a Poisson process on
$[-\frac{\pi}{2} e^{R/2}, \frac{\pi}{2} e^{R/2}]\times[0,R]$ with intensity function

\[ f_{\Psi(V)}(x,y) := f_V\left(\Psi^{-1}(x,y)\right) |\det(J)|, \]

\noindent
where $J$ denotes the Jacobian of $\Psi^{-1}$. It is easily checked that $|\det(J)| = 2 e^{-R/2}$.
We see that 

\[ \begin{array}{rcl}
f_{\Psi(V)}(x,y) 
& = & 
\frac{\nu\alpha}{\pi} \frac{\frac12(e^{\alpha(R-y)} - e^{\alpha(y-R)})}{\frac12(e^{\alpha R} - e^{-\alpha R}) - 1} \\
 & = & 
 \frac{\nu\alpha}{\pi} e^{-\alpha y} \cdot \left(
 \frac{1 - e^{2\alpha(y-R)}}{1-e^{-2\alpha R} - 2e^{-\alpha R}}
 \right) \\
 & = & 
 \frac{\nu\alpha}{\pi} e^{-\alpha y} \cdot \left(
 1 - O( e^{2\alpha(y-R)} ) + O( e^{-\alpha R} ) 
 \right).
\end{array} \]

\noindent
Let us also recall that $\Vtil$ is a Poisson process with intensity $f_{\Vtil}(x,y) = \frac{\nu\alpha}{\pi}e^{-\alpha y}$ on
$[-\frac{\pi}{2} e^{R/2}, \frac{\pi}{2} e^{R/2}]\times[0,R]$.
Let us write $f_{\min} := \min \{ f_{\Psi(V)}, f_{\Vtil} \}$.
Let $\Pcal_0, \Pcal_1, \Pcal_2$ be independent Poisson processes, $\Pcal_0$ with intensity $f_{\min}$, $\Pcal_1$ with intensity
$f_{\Psi(V)}-f_{\min}$ and $\Pcal_2$ with intensity $f_{\Vtil} - f_{\min}$.
We couple $\Vtil, \Psi(V)$ by setting $\Psi(V) = \Pcal_0 \cup \Pcal_1, \Vtil = \Pcal_0 \cup \Pcal_2$.
(This clearly also defines a coupling between $V$ and $\Vtil$.)
This way, the event $\Vtil = \Psi(V)$ coincides with the event $\Pcal_1=\Pcal_2 = \emptyset$.
Comparing $f_{\Psi(V)}$ and $f_{\Vtil}$, we see that 

\[ \begin{array}{rcl}
 \Ee |\Pcal_1|, \Ee |\Pcal_2| 
 & \leq & 
 \pi e^{R/2} \cdot O\left( \int_0^R e^{\alpha y - 2\alpha R}{\dd}y + \int_0^R
 e^{-\alpha (y + R)} {\dd} y \right) \\
 & = &
 O\left( e^{R(1/2-\alpha)} \right) \\
 & = & 
 o(1),
 \end{array} \]

\noindent
where we used the assumption that $\alpha > 1/2$.
This shows that, under the chosen coupling, $\Pee( \Vtil = \Psi(V) ) = 1-o(1)$, as claimed.
\end{proof}

Before we can continue studying $\GPo$ and $\Gammatil$, we first derive some useful asymptotics.

\begin{lemma}\label{lem:asymptotics}
There exists a constant $K>0$ such that, for every $\eps>0$ and for $R$ sufficiently large, the following holds.
Let us write $\Delta(r,r') := \frac12 e^{R/2} \arccos\left( (\cosh r \cosh r' - \cosh R)/ \sinh r \sinh r' \right)$.
For every $r,r'\in [\eps R,R]$ with $r+r' \geq R$ we have that 

\[ \frac12 e^{\frac12(y+y')} - K e^{ -\frac12\max\{ y,y'\} + \frac32\min\{y,y'\} - R}
 \leq \Delta( r, r') \leq \frac12 e^{\frac12(y+y')} + K e^{\frac32(y+y') - R},
\]
\noindent
where $y := R-r, y' := R-r'$.
\end{lemma}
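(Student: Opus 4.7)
The plan is to obtain $\Delta$ via a careful Taylor expansion in powers of $e^{-R}$, using the hyperbolic cosine rule and the Taylor expansion of $\arccos$ near $1$.

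First, one simplifies the argument of $\arccos$. Using the identity $\cosh r\cosh r' - \sinh r\sinh r' = \cosh(r-r')$, one obtains
\[
1 - \cos\vartheta \;=\; \frac{\cosh R - \cosh(r-r')}{\sinh r\,\sinh r'},
\]
where $\vartheta := \arccos\bigl((\cosh r\cosh r' - \cosh R)/(\sinh r \sinh r')\bigr)$ is the angle appearing in the definition of $\Delta$. Substituting $r = R-y$ and $r' = R-y'$ and expanding each hyperbolic function in exponentials, the numerator becomes $\tfrac12 e^R\,(1 + \delta_N)$ and the denominator becomes $\tfrac14 e^{2R-y-y'}\,(1 + \delta_D)$, where $\delta_N = -2\cosh(y-y')e^{-R} + e^{-2R}$ and $\delta_D = -e^{-2(R-y)} - e^{-2(R-y')} + e^{-4R+2(y+y')}$ are explicit small corrections. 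Dividing yields
\[
1 - \cos\vartheta \;=\; 2\,e^{y+y'-R}\cdot\frac{1+\delta_N}{1+\delta_D} \;=\; 2\,e^{y+y'-R}\,(1+\eta),
\]
with a signed correction $\eta = \delta_N - \delta_D + O((\delta_N+\delta_D)^2)$. The hypothesis $r, r' \in [\eps R, R]$ keeps $|\delta_N|, |\delta_D|$ uniformly small as $R \to \infty$, and the condition $r + r' \geq R$ ensures $1 - \cos\vartheta$ stays bounded away from its extreme value $2$ (so the arccos expansion is valid).

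Next, one invokes the Taylor expansion $\arccos(1-x) = \sqrt{2x}\,(1 + x/12 + O(x^2))$ as $x \downarrow 0$, combined with $\sqrt{1+\eta} = 1 + \eta/2 + O(\eta^2)$. Substituting $x = 2 e^{y+y'-R}(1+\eta)$ and multiplying through by $\tfrac12 e^{R/2}$ expresses $\Delta(r,r')$ as the main term plus two qualitatively different contributions: a strictly positive one of order $e^{3(y+y')/2 - R}$ coming from the $x/12$ piece of the arccos expansion, and a signed one from $\eta/2$ multiplied by $\sqrt{2x}$, whose dominant part is $-\cosh(y-y')e^{(y+y')/2 - R}$ together with positive subleading pieces inherited from $\delta_D$.

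The main obstacle is tracking the signs and magnitudes of these corrections carefully enough to produce the asymmetric upper and lower bounds. The positive-only $x/12$ contribution is responsible for the upper-bound error $K e^{3(y+y')/2 - R}$ more or less directly. The tighter lower-bound error $K e^{-\max\{y,y'\}/2 + 3\min\{y,y'\}/2 - R}$ requires a more refined analysis: one must combine the dominant negative term in $\eta$ with the subleading positive contributions from $\delta_D$ and with the next-order terms of both expansions in order to extract the cancellations that make the effective negative correction to $\Delta$ smaller than the crude bound $\cosh(y-y')e^{(y+y')/2-R}$ would suggest. A secondary technical point is uniformity over $r,r' \in [\eps R, R]$: the lower bound $\eps R$ on the radial coordinates guarantees $\delta_D = O(e^{-2\eps R})$ and hence legitimises the formal expansions, while $r+r' \geq R$ keeps us uniformly bounded away from the branch point $\vartheta = \pi$ of $\arccos$ where the expansion would break down.
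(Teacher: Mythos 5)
Your strategy---expand the argument of $\arccos$ via the hyperbolic cosine rule and then use the behaviour of $\arccos(1-x)$ near $x=0$---is indeed the paper's strategy. Two differences of execution are worth noting. The paper does not invoke a two-term Taylor series $\arccos(1-x)=\sqrt{2x}(1+x/12+O(x^2))$; instead it proves and uses the elementary one-sided inequalities $\sqrt{2x}\le\arccos(1-x)\le\sqrt{2x}+1000x^{3/2}$ (an appendix lemma), applying the left-hand inequality for the lower bound on $\Delta$ and the right-hand one for the upper bound, which sidesteps any need to control higher-order terms or their signs. Your use of $\cosh r\cosh r'-\sinh r\sinh r'=\cosh(r-r')$ to write $1-\cos\vartheta=(\cosh R-\cosh(r-r'))/(\sinh r\sinh r')$ is a genuine simplification the paper does not perform (it expands all four hyperbolic functions into exponentials directly), but it leads to the same leading behaviour.

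The genuine gap is the ``cancellation'' you gesture at for the lower bound and never carry out. You correctly observe that the crude estimate yields an error of order $\cosh(y-y')\,e^{\frac12(y+y')-R}$, i.e.\ exponent $\tfrac12(y+y')+|y-y'|-R=\tfrac32\max\{y,y'\}-\tfrac12\min\{y,y'\}-R$, and you then appeal to an unspecified cancellation in order to reach the tighter stated exponent $-\tfrac12\max\{y,y'\}+\tfrac32\min\{y,y'\}-R$. But the paper performs no such cancellation: it applies $\arccos(1-x)\ge\sqrt{2x}$ with $x=2e^{y+y'-R}(1-O(e^{|y-y'|-R}))$ to get $\Delta\ge e^{\frac12(y+y')}(1-O(e^{|y-y'|-R}))$, and its final displayed equality rewrites this as $e^{\frac12(y+y')}-O(e^{-\frac12\max+\frac32\min-R})$, which is an arithmetic slip, since $\tfrac12(y+y')+|y-y'|$ equals $\tfrac32\max-\tfrac12\min$ rather than $-\tfrac12\max+\tfrac32\min$ (the two agree only when $y=y'$). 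A direct check, say at $y=R/4$, $y'=0$, gives $e^{\frac12(y+y')}-\Delta\asymp e^{-5R/8}$, far larger than the claimed $O(e^{-9R/8})$; so the cancellation you hoped for does not exist. (There is also a spurious factor of $\tfrac12$ in the statement: since $\tfrac12 e^{R/2}\sqrt{2\cdot 2e^{y+y'-R}}=e^{\frac12(y+y')}$, the leading term of $\Delta$ is $e^{\frac12(y+y')}$, not $\tfrac12 e^{\frac12(y+y')}$, and the paper's own proof visibly produces $e^{\frac12(y+y')}$.) In short, the ``crude bound'' you identified is exactly what the paper's calculation yields, and the step you flagged as the main obstacle is the place where the statement as printed does not match the proof; your proposal does not supply the missing argument, and neither does the paper.
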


\begin{proof}
We compute:

\[ \begin{array}{rcl}
\displaystyle 
 \frac{\cosh r \cosh r' - \cosh R}{\sinh r \sinh r'}
 & = & 
\displaystyle  
\frac{
 \frac14(e^{r+r'}+e^{r-r'}+e^{r'-r}+e^{-(r+r')}) -\frac12(e^R-e^{-R})
 }{
 \frac14(e^{r+r'}-e^{r-r'}-e^{r'-r}+e^{-(r+r')}) 
 } \\
 & = &
 \displaystyle 
 1
 + 
 \frac{
 2(e^{r-r'}+e^{r'-r})
 }{
 e^{r+r'}-e^{r-r'}-e^{r'-r}+e^{-(r+r')} 
 } \\
 & & 
 \displaystyle
 - \frac{2(e^R-e^{-R})}{ e^{r+r'}-e^{r-r'}-e^{r'-r}+e^{-(r+r')}
 } \\
 & = & 
 \displaystyle 
 1 + O\left(e^{-2\min\{ r,r'\} }\right) - 2 e^{R-(r+r')}\left(1+O\left(e^{-2\min\{ r,r'\}}\right)\right) \\
 & = & 
 \displaystyle 
 1 - 2 e^{R-(r+r')} \left(1 + O\left( e^{-2\min \{ r,r'\}} + e^{-2\min\{ r,r'\} + (r+r')-R}\right) \right) \\
 & = & 
 \displaystyle 
 1 - 2 e^{y+y'-R} \left(1 - O\left( e^{|y-y'|- R} \right) \right),
\end{array} \]

\noindent
using that $r + r' \geq R$ for the last line.
Since $\arccos(1-x) \geq \sqrt{2x}$ for all $0\leq x\leq 1$ (for completeness we provide a detailed derivation as Lemma~\ref{lem:arccoslemma} 
in the appendix), we see that

\[ \begin{array}{rcl}
\Delta (r,r')
& \geq &  
\frac12 e^{R/2} \cdot 2 e^{\frac12(y+y')-R/2} \cdot \sqrt{1-O\left( e^{|y-y'|- R} \right)} \\
& = & 
e^{\frac12(y+y')} \cdot \left(1-O\left( e^{|y-y'|- R} \right)\right)  \\
& = & 
 e^{\frac12(y+y')} - O\left(
 e^{ -\frac12\max\{ y,y'\} + \frac32\min\{ y,y'\} - R}
 \right),
\end{array} \]
 
 \noindent
where we have used that $\sqrt{1-x} = 1 - O(x)$ (see Lemma~\ref{lem:sqrtlemma} in the appendix).
This proves the lower bound in the lemma.

For the upper bound, we use that $\cos(1-x) \leq \sqrt{2x} + 1000 x^{3/2}$ (see Lemma~\ref{lem:arccoslemma} in the appendix). This shows that

 \[ \begin{array}{rcl}
 \Delta 
 & \leq & 
 e^{\frac12(y+y')} + \frac12 e^{R/2} \cdot 1000 e^{\frac32(y+y'-R)} \\
 & = & 
 e^{\frac12(y+y')} + 500 e^{\frac32(y+y') - R}.
\end{array} \]

\end{proof}

Let $X_1 = (r_1,\vartheta_1), X_2 = (r_2,\vartheta_2), \dots \in \Dcal_R$ be an infinite supply of i.i.d.~points
drawn according to the $(\alpha, R)$-quasi uniform distribution, and set 
$\Xtil_i := \Psi(X_i)$ for $i=1,2,\dots$.
For notational convenience, we will sometimes also write $\Xtil_i =: (x_i, y_i)$.
If the coupling from Lemma~\ref{lem:coupling} holds, we can write 
$V = \{ X_1, \dots, X_Z\}, \Vtil = \{ \Xtil_1, \dots, \Xtil_Z\}$.

We will make use of the following result, which is also known as the Mecke formula and can be found in Penrose's 
monograph~\cite{bk:Penrose}, as Theorem 1.6.

\begin{theorem}[\cite{bk:Penrose}]\label{thm:palm}
Let $\Pcal$ be a Poisson process on $\eR^d$ with intensity function $f$, and suppose that $\mu := \int f < \infty$.
Suppose that $h(\Ycal, \Xcal)$ is a bounded measurable function, defined on pairs $(\Ycal,\Xcal)$ with $\Ycal \subseteq \Xcal \subseteq \eR^d$
and $\Xcal$ finite, such that $h(\Ycal,\Xcal) = 0$ whenever $|\Ycal| \neq j$ (for some $j \in \eN$).
Then
\[ \Ee \sum_{\Ycal \subseteq \Pcal} h( \Ycal, \Pcal) = \frac{\mu^j}{j!} \cdot \Ee h(\{Y_1,\dots, Y_j\}, \{Y_1,\dots, Y_j\} \cup \Pcal ), \]
\noindent
where the $Y_i$ are i.i.d.~random variables that are independent of $\Pcal$ and have common probability density function
$f/\mu$.
\end{theorem}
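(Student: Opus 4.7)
The plan is to prove this via the standard mixture representation of a Poisson process. Concretely, since $\mu = \int f < \infty$, one can realize $\Pcal$ as $\{X_1, \dots, X_N\}$, where $N \sim \Po(\mu)$ is independent of an infinite i.i.d.\ sequence $X_1, X_2, \dots$ with common density $f/\mu$. All formulas then reduce to manipulations with i.i.d.\ variables and a Poisson mixing weight.

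First I would exploit the hypothesis that $h(\Ycal,\Xcal)=0$ unless $|\Ycal|=j$, so only $j$-element subsets contribute. Conditioning on $N=n \geq j$, the sum $\sum_{\Ycal \subseteq \Pcal} h(\Ycal,\Pcal)$ contains exactly $\binom{n}{j}$ terms, and by exchangeability of the i.i.d.\ sample each of them has the same expectation, equal to $\Ee\,h(\{X_1,\dots,X_j\},\{X_1,\dots,X_n\})$. Combining this with the Poisson weight $e^{-\mu}\mu^n/n!$ and summing over $n$ yields
\[
 \Ee\sum_{\Ycal \subseteq \Pcal} h(\Ycal,\Pcal)
 = \sum_{n=j}^{\infty} \frac{e^{-\mu}\mu^n}{j!\,(n-j)!}\, \Ee\,h(\{X_1,\dots,X_j\},\{X_1,\dots,X_n\}).
\]
The interchange of $\Ee$ and the sum is legitimate because $h$ is bounded and $\mu<\infty$, so everything is absolutely summable.

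Next I would re-index by $m := n-j$ and pull out the common factor $\mu^j/j!$, obtaining
\[
 \Ee\sum_{\Ycal \subseteq \Pcal} h(\Ycal,\Pcal)
 = \frac{\mu^j}{j!} \sum_{m=0}^{\infty} \frac{e^{-\mu}\mu^m}{m!}\, \Ee\,h(\{X_1,\dots,X_j\},\{X_1,\dots,X_j\}\cup\{X_{j+1},\dots,X_{j+m}\}).
\]
The conceptual step is now to identify the right-hand side with the desired expression. Setting $Y_i := X_i$ for $i=1,\dots,j$ and $\Pcal' := \{X_{j+1},\dots,X_{j+M}\}$ for an independent $M \sim \Po(\mu)$, the tail sample $\Pcal'$ is, by the mixture representation, itself a Poisson process of intensity $f$, and it is independent of $(Y_1,\dots,Y_j)$ because the $X_i$ are i.i.d.\ and $M$ is independent of them. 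Thus the inner sum is precisely $\Ee\,h(\{Y_1,\dots,Y_j\},\{Y_1,\dots,Y_j\}\cup\Pcal')$, yielding the claimed identity.

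The main obstacle is really only bookkeeping: one must justify the exchangeability argument carefully (i.e.\ that every $j$-subset of $\{X_1,\dots,X_n\}$ gives the same expectation after integrating out the remaining coordinates), and one must check the independence claim for $\Pcal'$. Both follow cleanly from the mixture construction of the Poisson process, and the boundedness of $h$ together with $\mu<\infty$ takes care of all measure-theoretic issues via Fubini/Tonelli. Standard approximation (monotone class or truncation on $\Pcal$ to a large bounded set, then taking a limit) could be invoked if one wanted to relax the assumption that the total mass is finite, but under the stated hypothesis it is unnecessary.
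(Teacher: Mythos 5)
Your proof is correct. The paper cites this result from Penrose's monograph rather than proving it, so there is no in-paper proof to compare against; however, your argument is the standard one for the finite-intensity Mecke formula. The mixture representation $\Pcal \isd \{X_1,\dots,X_N\}$ with $N \sim \Po(\mu)$ independent of the i.i.d.~sample, conditioning on $N=n$, invoking exchangeability to reduce all $\binom{n}{j}$ summands to a single term, and then re-indexing to absorb the Poisson tail into a fresh independent $\Po(\mu)$ variable $M$ so that $\{X_{j+1},\dots,X_{j+M}\}$ is again a Poisson process of intensity $f$, is precisely the classical derivation. Your Fubini justification is adequate: boundedness of $h$ gives $\bigl|\Ee\bigl[\sum_\Ycal h \mid N=n\bigr]\bigr| \le C\binom{n}{j}$, whose Poisson-weighted sum equals $C\mu^j/j!<\infty$. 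One small point worth making explicit if you write this up: since $f$ is a density with respect to Lebesgue measure, the $X_i$ are a.s.~pairwise distinct, so $\{X_1,\dots,X_n\}$ genuinely has $n$ elements and the identification of $j$-subsets of $\Pcal$ with $j$-element index sets is clean.
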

We shall be applying the above theorem letting $f$ be the density function induced by $f_{\alpha, \nu \alpha /\pi}$ 
on $[-\frac{\pi}{2} e^{R/2},\frac{\pi}{2}e^{R/2}]\times [0,R]$. Thereby $\mu= \frac{\nu \alpha}{\pi} 
\int_{-\frac{\pi}{2} e^{R/2}}^{\frac{\pi}{2}e^{R/2}} \int_0^R e^{-\alpha y} dy dx = \nu e^{R/2}(1- e^{-\alpha R}) = N(1-o(1))$.

\begin{lemma}\label{lem:edges}
On the coupling space of Lemma~\ref{lem:coupling}, the following hold a.a.s.:
\begin{enumerate}
 \item\label{itm:edges.i} for all $i,j \leq Z$ we have $\Xtil_i\Xtil_j \in E(\Gammatil) \Rightarrow X_iX_j \in E(\GPo)$.
 \item\label{itm:edges.ii} for all $i,j \leq Z$ with $r_i + r_j \geq \frac32 R$, we have that
 $\Xtil_i\Xtil_j \in E(\Gammatil) \Leftrightarrow X_iX_j \in E(\GPo)$
\end{enumerate}
\end{lemma}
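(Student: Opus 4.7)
The approach is to apply the Mecke (Palm) formula of Theorem~\ref{thm:palm} to the Poisson point process underlying $\Vtil$, bound the expected number of pairs violating the claimed implications, and conclude via Markov's inequality. The first step is to translate both edge conditions into thresholds on $|x_i - x_j|_{\pi e^{R/2}}$: by definition the $\Gammatil$-threshold is $e^{(y_i+y_j)/2}$, while for $\GPo$ the hyperbolic cosine rule together with the relation $|x_i - x_j|_{\pi e^{R/2}} = |\vartheta_i - \vartheta_j|_{2\pi}\cdot e^{R/2}/2$ yields the threshold $\Delta(r_i,r_j)$ of Lemma~\ref{lem:asymptotics}. A pair is ``bad'' for one direction of the implication precisely when $|x_i-x_j|_{\pi e^{R/2}}$ lies in the open interval between these two thresholds.

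For part (i), I would first observe that pairs with $r_i + r_j \leq R$ satisfy $d_H(X_i,X_j) \leq r_i + r_j \leq R$ by the triangle inequality, so $X_iX_j \in E(\GPo)$ holds automatically and the implication is vacuous. Furthermore, the first-moment estimate $N\cdot F_{\alpha,R}(\eps R) = \nu e^{(1/2 - \alpha(1-\eps))R}(1+o(1))$ shows that a.a.s.\ every vertex has $r_i \geq \eps R$, provided $\eps > 0$ is chosen so that $\alpha(1-\eps) > 1/2$. Thus attention can be restricted to pairs with $r_i,r_j \in [\eps R, R]$ and $r_i + r_j > R$, the regime where Lemma~\ref{lem:asymptotics} applies. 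In this regime the lower bound on $\Delta$ controls the length of the bad interval, and by Mecke the expected number of bad pairs for part (i) is bounded by an integral of the form $\mathrm{const}\cdot e^{R/2}\int\!\!\int e^{-\alpha(y+y')}(e^{(y+y')/2} - \Delta)_+ \,dy\,dy'$, which can be shown to be $o(1)$ by splitting the integration region according to whether $\min(y_i,y_j)$ is smaller than a universal constant; the geometric content is that the bad interval actually vanishes when both $y_i, y_j$ are bounded away from zero, as a comparison of $\sin^2(e^{(s-R)/2})$ with the expansion of the arccos underlying Lemma~\ref{lem:asymptotics} reveals.

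For part (ii), the stronger hypothesis $r_i + r_j \geq \tfrac{3}{2}R$ forces $y_i + y_j \leq R/2$, in which range both error terms of Lemma~\ref{lem:asymptotics} are of order $e^{-R/4}$ and hence exponentially smaller than $e^{(y+y')/2}$. The same Mecke-based estimate, now applied symmetrically to both directions of the equivalence (using also the upper bound on $\Delta$ for the ``$\GPo$-edge but not $\Gammatil$-edge'' case), gives expected count $o(1)$: concretely one bounds $(\Delta - e^{(y+y')/2})_+ \leq K e^{\tfrac{3}{2}(y+y') - R}$, multiplies by the intensity $e^{-\alpha(y+y')}$ and the prefactor $e^{R/2}/L$, and checks that the resulting integrand decays exponentially in $R$ throughout the region $y+y'\leq R/2$ for every $\alpha > 1/2$.

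A union bound over the two parts, combined with Markov's inequality, then completes the proof. The main obstacle will be the careful evaluation of the integral in part (i) for $\alpha$ close to $1/2$: the straightforward bound from Lemma~\ref{lem:asymptotics} is tight only when one of $y_i, y_j$ is small, and to push the argument through one must exploit the refined observation (implicit in the Taylor expansion behind the lemma) that $\Delta \geq e^{(y_i+y_j)/2}$ already holds whenever $\min(y_i,y_j)$ exceeds a universal constant, so that the contribution of ``bulk'' pairs to $\mathbb{E}[\text{bad}]$ simply vanishes and only a boundary-layer integral with an $O(1)$-sized $\min$-coordinate remains to be bounded.
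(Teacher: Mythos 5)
Your high-level strategy matches the paper's: translate both edge conditions into thresholds on $|x_i-x_j|_{\pi e^{R/2}}$, restrict to $r_i$ bounded away from $0$ and $r_i+r_j>R$, use the Mecke formula to bound the expected number of pairs in the ``bad'' interval between the two thresholds, and conclude by Markov. Parts of your sketch (the triangle-inequality observation, the radius cutoff, the use of both the lower and upper bounds of Lemma~\ref{lem:asymptotics} in parts (i) and (ii) respectively) are exactly what the paper does.

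However, the ``main obstacle'' you flag in part (i) does not exist, and the ``refined observation'' you say is needed is neither established in the paper nor used. The straightforward lower bound $\Delta(r_i,r_j)\geq e^{\frac12(y_i+y_j)}-Ke^{-\frac12\max\{y_i,y_j\}+\frac32\min\{y_i,y_j\}-R}$ is already sufficient. Writing $s=\max$, $t=\min$ and feeding this directly into Mecke gives
\[
\Ee A \;=\; O\Bigl(e^{-R/2}\int_0^{R/2}\int_t^{R-t} e^{-(\frac12+\alpha)s}\,e^{(\frac32-\alpha)t}\,\dd s\,\dd t\Bigr)
\;=\;O\Bigl(e^{-R/2}\int_0^{R/2} e^{(1-2\alpha)t}\,\dd t\Bigr)\;=\;O(e^{-R/2}),
\]
since the inner $s$-integral contributes a factor $O(e^{-(\frac12+\alpha)t})$ and the resulting $t$-integral converges for every fixed $\alpha>\frac12$. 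There is no need to split ``bulk'' from ``boundary layer'', and the claim that $\Delta\geq e^{\frac12(y_i+y_j)}$ whenever $\min(y_i,y_j)$ exceeds a constant does not follow from the two-sided bound of Lemma~\ref{lem:asymptotics} (the lower-bound error term is always positive). Your part (ii) reasoning is correct and mirrors the paper's estimate for $\Ee B$. Overall this is the same proof as the paper's, but you should drop the purported extra lemma and simply carry out the integral; the $1/(2\alpha-1)$ constant may be large as $\alpha\downarrow\frac12$, but $\alpha$ is fixed, so this causes no difficulty.
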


\begin{proof}
Note that if $r_i+r_j \leq R$ then $X_iX_i \in E(\GPo)$ by the triangle inequality.
So the lemma trivially holds for all pairs $i,j$ with $r_i+r_j \leq R$. 
Let us also remark that, a.a.s., there are no vertices $i$ with $r_i \leq \gamma R =: \frac12(1-\frac{1}{2\alpha}) R$.
(Since the expected number of such vertices is $O( e^{R/2} e^{-\alpha(1-\gamma) R} ) 
= O( e^{(1/2-\alpha + \alpha\gamma) R} ) = O( e^{\frac12 \left( \frac12 - \alpha \right)R} ) = o(1)$.)
In all the computations that follow we shall thus always assume that $r_i, r_j \geq \gamma R$ and $r_i+r_j > R$.

\smallskip

By the hyperbolic cosine rule, we have $X_iX_j \in E(\GPo)$ if and only if  $|\vartheta_i-\vartheta_j|_{2\pi} \leq
\Delta_{ij} := \arccos\left( (\cosh r_i \cosh r_j - \cosh R)/ \sinh r_i \sinh r_j \right)$. 
In other words, $X_iX_j \in E(\GPo)$ if and only if $|x_i-x_j|_{\pi e^{R/2}} \leq 
\Delta(r_i, r_j)$ with $\Delta(.,.)$ as in Lemma~\ref{lem:asymptotics}.

Let $A$ denote the number of pairs $\Xtil_i, \Xtil_j \in \Vtil$ for which $y_i+y_j \leq R$ and
$\Delta(r_i,r_j) < |x_i-x_j|_{\pi e^{R/2}} \leq e^{\frac12(y_i+y_j)}$.
By Theorem~\ref{thm:palm} and Lemma~\ref{lem:asymptotics}, we see that 

\[ \begin{array}{rcl}
\Ee A 
& = & 
\displaystyle
\int_{-\frac{\pi}{2}e^{R/2}}^{\frac{\pi}{2}e^{R/2}}\int_0^R\int_{-\frac{\pi}{2}e^{R/2}}^{\frac{\pi}{2}e^{R/2}}\int_0^{R-y_i} 
{\bf 1}_{\{\Delta(r_i,r_j) < |x_i-x_j|_{\pi e^{R/2}} \leq e^{\frac12(y_i+y_j)}\}}
\left(\frac{\nu\alpha}{\pi}\right)^2 e^{-\alpha y_i} e^{-\alpha y_j} {\dd}x_j{\dd}y_j{\dd}x_i{\dd}y_i \\
& \leq & 
\displaystyle
2 \pi e^{R/2} \int_0^R \int_0^{R-y_i} K
 e^{ -\frac12\max\{ y_i,y_j\} + \frac32\min \{ y_i,y_j\} - R}
 \left(\frac{\nu\alpha}{\pi}\right)^2 e^{-\alpha y_i} e^{-\alpha y_j} {\dd}y_j{\dd}y_i \\
 & = & 
 \displaystyle
 O\left(
 e^{-R/2}
 \int_0^{R/2} \int_t^{R-t} 
 e^{ -(\frac12+\alpha)s} e^{(\frac32-\alpha) t} {\dd}s{\dd}t
 \right)  \\
 & = & 
 \displaystyle
 O\left(
 e^{-R/2}
 \int_0^{R/2} 
 e^{ (1-2\alpha) t} {\dd}t
 \right) \\
 & = & 
 \displaystyle
 O\left( e^{-R/2} \right) = o(1).
\end{array} \]

\noindent
This proves part~\ref{itm:edges.i}.

\noindent
Let $B$ denote the number of pairs $\Xtil_i, \Xtil_j \in \Vtil$ for which $y_i+y_j \leq R/2$ and
$e^{\frac12(y_i+y_j)} < |x_i-x_j|_{\pi e^{R/2}} \leq \Delta(r_i,r_j)$.
Arguing as before, we see that

\[ \begin{array}{rcl}
\Ee B 
& \leq & 
\pi e^{R/2} \int_0^{R/2} \int_0^{R/2-t} K e^{\frac32(s+t)-R}
\left(\frac{\nu\alpha}{\pi}\right)^2 e^{-\alpha s} e^{-\alpha t} {\dd}s{\dd}t \\
& = & 
O\left( e^{-R/2} \int_0^{R/2} e^{(\frac32-\alpha)R/2} {\dd}t \right) \\
& = & 
O\left( R \cdot e^{(\frac12-\alpha)R/2} \right) \\
& = & 
o(1),
\end{array} \]

\noindent
which proves part~\ref{itm:edges.ii}.
\end{proof}

\begin{lemma}\label{lem:GiantAtLeastc}
A.a.s., $\GPo$ has a component containing at least $(c(\alpha,\nu)-o(1)) N$ vertices.
\end{lemma}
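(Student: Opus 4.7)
The idea is to transfer the local percolation structure of the infinite model $\Gamma_{\alpha,\lambda}$ (with $\lambda = \nu\alpha/\pi$) to $\Gammatil$ via the coupling of Lemma~\ref{lem:coupling}, then to $\GPo$ via Lemma~\ref{lem:edges}. Fix $\epsilon > 0$ small. Choose $K = K(\epsilon)$ with $\int_K^\infty \alpha e^{-\alpha y}\dd y < \epsilon$, then apply Lemma~\ref{lem:T} to obtain constants $w, h_0$, and finally pick $h \geq h_0$ via Lemma~\ref{lem:Ch} with $\Pee(C_{w,h}) \geq 1 - \epsilon$. All these constants are fixed independently of $N$. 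Call a vertex $p \in \Vtil$ \emph{good} if $y(p) \leq K$ and the horizontally translated $T$-event holds around $p$: there is a path in $\Gammatil$ from $p$ to some vertex at height in $[h,2h]$, staying inside $[x(p)-we^h, x(p)+we^h]\times [0,2h]$. By horizontal translation invariance of the Poisson process and Lemma~\ref{lem:T}, each vertex at height $y \in [0,K]$ is good with probability at least $\theta(y;\alpha,\nu\alpha/\pi) - \epsilon$.

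\textbf{Counting good vertices.} Using the Mecke formula (Theorem~\ref{thm:palm}) and the density $\frac{\nu\alpha}{\pi}e^{-\alpha y}$ of $\Vtil$, the expected number of good vertices is at least
\[
\pi e^{R/2}\cdot \int_0^K (\theta(y;\alpha,\nu\alpha/\pi)-\epsilon)\,\tfrac{\nu\alpha}{\pi} e^{-\alpha y}\dd y = N \int_0^K (\theta(y;\alpha,\nu\alpha/\pi)-\epsilon)\,\alpha e^{-\alpha y}\dd y \geq (c(\alpha,\nu) - 2\epsilon)N.
\]
Since goodness of $p$ depends only on $\Pcal_{\alpha,\lambda}$ restricted to a box of fixed size around $p$, pairs of good vertices whose $x$-coordinates differ by more than $3we^h$ are independent, and Chebyshev's inequality yields concentration of the count around the mean with $o_p(N)$ fluctuations.

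\textbf{Merging into one component.} Tile the cylinder's $x$-axis into $M = \Theta(e^{R/2-h})$ consecutive intervals of width $e^h$ and, for each interval $I_k$, consider the horizontally translated crossing event $C^k_{w,h}$ inside the box $[x_k - we^h, x_k + we^h]\times [0,h]$ (with $x_k$ the centre of $I_k$). Overlapping consecutive $C^k_{w,h}$-paths share a common vertical slab, and by Lemma~\ref{lem:cross}(ii) their crossing implies they lie in one component. Moreover, if $p$ is a good vertex with $x(p) \in I_k$ and $C^k_{w,h}$ holds, then the $T$-path from $p$ (which must reach height $h$ while confined to $x$-coordinates in $[x(p)-we^h, x(p)+we^h]$) is crossed by the $C^k_{w,h}$-path, and another application of Lemma~\ref{lem:cross} puts $p$ in the same component as $C^k_{w,h}$. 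Thus every good vertex sitting above a ``successful'' interval joins one giant component formed by the chain of successful crossings.

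\textbf{Main obstacle and conclusion.} The delicate point is that crossings fail independently on an $\epsilon$-fraction of intervals, potentially splitting the horizontal chain into short pieces. To overcome this, I would employ a renormalisation argument at a coarser scale: group intervals into blocks of length $L e^h$ (with $L = L(\epsilon)$ large), and declare a block \emph{open} if both it and its right neighbour admit a connecting crossing. For $\alpha<1$ the box-empty bound in the proof of Lemma~\ref{lem:perc3} gives doubly-exponential decay; for $\alpha=1, \nu>\nuc$ one uses the $k$-dependent structure of $(C^k_{w,h})_k$ together with a stochastic-domination comparison to a 1-dependent Bernoulli process of density $1-o_\epsilon(1)$, which has a giant cluster covering a $(1-o_\epsilon(1))$-fraction of the blocks a.a.s. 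Thus, after possibly discarding an $o_\epsilon(1)N$ mass of good vertices sitting above failed blocks, a single component $\Cscr^*$ of $\Gammatil$ contains at least $(c(\alpha,\nu) - O(\epsilon))N - o_p(N)$ vertices. Because every edge used lies between vertices with $y$-coordinates at most $2h \ll R/2$, Lemma~\ref{lem:edges}\ref{itm:edges.ii} transfers $\Cscr^*$ into an equally large component of $\GPo$. Letting $\epsilon \downarrow 0$ yields the lemma.
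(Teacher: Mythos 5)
Your decomposition identifies the right ingredients — Lemma~\ref{lem:T} for the $T$-event, Lemma~\ref{lem:Ch} for crossings, Mecke's formula for first/second moments, Lemma~\ref{lem:edges} to transfer to $\GPo$ — but the merging step is where the argument fails, and not merely on a technicality.

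You fix $h$ as a constant (independent of $N$) and then tile the strip into $M=\Theta(e^{R/2-h})\to\infty$ intervals, each with a crossing probability $\ge 1-\eps$. This reduces the merging to a one-dimensional ($1$-dependent) block process with open-block density $\ge 1-\eps$. The trouble is that no such one-dimensional process has a giant cluster: for any fixed $\eps>0$, the number of closed blocks is $\Theta(\eps M)$, these split the open blocks into runs, and the longest run of consecutive open blocks is $O(\log M / \eps) = o(M)$. Hence the good vertices are \emph{not} discarded from a single component at cost $O(\eps N)$; rather they are shattered into $\Theta(\eps M)$ distinct components, the largest of which captures only an $o(1)$ fraction of the good vertices. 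Your stated claim that a $1$-dependent Bernoulli process of density $1-o_\eps(1)$ has ``a giant cluster covering a $(1-o_\eps(1))$-fraction of the blocks a.a.s.'' is false in one dimension; this is precisely the reason the renormalisation step cannot be made to work at a scale $h$ that stays bounded as $N\to\infty$.

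The paper's proof avoids this trap by choosing $h$ to grow with $R$. Concretely one fixes $w_1$ much larger than $w$ and sets $h := R/2 + \log(\pi/2) - \log w_1$, so that $w_1 e^h = \frac{\pi}{2}e^{R/2}$ is the full half-width of the strip, and then uses a \emph{single} crossing event $C_{w_1,h}$ that spans the entire cylinder. Since $h\to\infty$, Lemma~\ref{lem:Ch} gives $\Pee(C_{w_1,h}) = 1-o(1)$, so there is no need to string together many local crossings at all; by Lemma~\ref{lem:cross} every good vertex connects to the one crossing path, and all are in a single component. The part of your sketch that invokes Lemma~\ref{lem:edges}\ref{itm:edges.ii} to transfer the component to $\GPo$ is also unnecessary: the paper simply uses part~\ref{itm:edges.i} (a.a.s.\ $\Gammatil$-edges are also $\GPo$-edges), which only gives an inclusion in one direction but is all one needs for a lower bound on the giant. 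The crucial idea you are missing is the scaling of $h$ with $R$, which makes a single crossing event suffice; without it the one-dimensional chaining argument collapses.
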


\begin{proof} 
We assume without loss of generality that $c(\alpha,\nu) > 0$ -- otherwise there is nothing to prove.
By part~\ref{itm:edges.i} of Lemma~\ref{lem:edges}, it suffices to show $\Gammatil$ has a component of the required size.
Let $\eps>0$ be arbitrary and choose $K = K(\eps)$ such that $\int_K^\infty \alpha e^{-\alpha y}{\dd}y < \eps/2$.
We now let $w = w(\alpha,\nu\alpha/\pi, K, \eps/2)$ be as provided by Lemma~\ref{lem:T}. 
We choose $w_1$ fixed but much larger than $w$ (to be determined later in the proof), and we set $h := R/2 + \log(\pi/2) - \log w_1$. 
(Observe that, this way we have ~$w_1 e^h = \frac{\pi}{2}e^{R/2}$.)
By Lemma~\ref{lem:Ch}, we have $\Pee_{\alpha, \lambda}( C_{w_1,h} ) = 1- o(1)$, as $h\to \infty$.

We now count the number $A$ of points $(x, y) \in \Vtil$ for which {\bf 1)} $y \leq K$, {\bf 2)} 
$|x| \leq (w_1-1-w)e^h = \frac{\pi}{2}e^{R/2}-(1+w)e^h$, and {\bf 3)} there is a path in $\Gammatil$ between $(x,y)$ and a point
in $[x-we^h, x+we^h]\times[h,2h]$ (that does not go outside of the box $[x-we^h, x+we^h]\times[0,2h]$).
Let us observe that, by Lemma~\ref{lem:cross}, if $C_{w_1,h}$ holds, then all the points counted by $A$ will belong to the same component.

For $(x,y) \in [-(w_1-1-w)e^h, (w_1-1-w)e^h]\times[0,K]$ let us define
$T_{x,y}$ as the event that {\bf 1)}, {\bf 2)}, {\bf 3)} hold
for $(x,y)$ with respect to the set of points $\{(x,y)\}\cup \Vtil$.
By Theorem~\ref{thm:palm} and Lemma~\ref{lem:T}, we have (with $\lambda = \nu \alpha /\pi$)

\begin{equation}\label{eq:EA}\begin{array}{rcl} 
\Ee A 
& = &
\displaystyle 
\int_0^K \int_{-(w_1-1-w)e^h}^{(w_1-1-w)e^h} \Pee_{\alpha, \lambda}( T_{x,y} ) \left(\frac{\nu\alpha}{\pi}\right) e^{-\alpha y}{\dd}x {\dd}y \\
& = & 
\displaystyle
2(w_1-1-w)e^h  \int_0^K \Pee_{\alpha, \lambda}( T(y; h, w ) ) \left(\frac{\nu\alpha}{\pi}\right) e^{-\alpha y}{\dd}y \\
& = & 
\displaystyle 
\pi e^{R/2}\left(1 - 2(w+1)/w_1\right)  
   \int_0^K \Pee_{\alpha, \lambda}( T(y; h, w ) ) \left(\frac{\nu\alpha}{\pi}\right) e^{-\alpha y}{\dd}y \\
& = & 
\displaystyle
\nu e^{R/2} \left(1 - \frac{2(w+1)}{w_1}\right)  \int_0^K \Pee_{\alpha, \lambda} ( T(y; h, w ) ) \alpha e^{-\alpha y}{\dd}y \\
& \geq & 
\displaystyle
N \left(1 - \eps/2 \right)  \int_0^K \Pee_{\alpha, \lambda} ( T(y; h, w ) ) \alpha e^{-\alpha y}{\dd}y \\
  \end{array} \end{equation}

\noindent
where $T(.;.,.)$ is as in Lemma~\ref{lem:T} and the last line holds provided we chose $w_1$ sufficiently large.
By Lemma~\ref{lem:T} and the choice of $K$, we see that

\[ \begin{array}{rcl} 
\Ee A 
& \geq & 
N (1-\eps/2) \int_0^K (\theta(y;\alpha,\lambda)-\eps/2)\alpha e^{-\alpha y}{\dd}y \\
& \geq &
N (1-\eps/2) \left( \int_0^\infty \theta(y;\alpha,\lambda)\alpha e^{-\alpha y}{\dd}y - \eps/2 \right) \\
& \geq  & 
N (c(\alpha,\nu)-\eps).    
   \end{array} \]

We now consider $\Ee A(A-1)$. Using Theorem~\ref{thm:palm}, we see that

\[ 
\Ee A(A-1) 
= 
\int_0^K \int_{-(w_1-1-w)e^h}^{(w_1-1-w)e^h} \int_0^K \int_{-(w_1-1-w)e^h}^{(w_1-1-w)e^h} 
\Pee_{\alpha, \lambda} ( T_{x,y} \cap T_{x',y'} ) \left(\frac{\nu\alpha}{\pi}\right)^2 e^{-\alpha y} e^{-\alpha y'} {\dd}x {\dd}y {\dd}x' {\dd}y'. 
\]

\noindent
Now we remark that $T_{x,y}$ and $T_{x',y'}$ are independent whenever $|x-x'| > 2we^h$.
This gives that 

\[ \begin{array}{rcl} 
\Ee A(A-1) 
& \leq &
\left( \Ee A\right)^2 + \Ee A \cdot \frac{4\nu}{\pi} w e^h \\
& \leq &
\left(\Ee A\right)^2 \left( 1 + O(\frac{w e^h}{\Ee A})\right) \\
& = &
\left(\Ee A\right)^2 \left( 1 + O(\frac{w e^h}{w_1 e^h})\right) \\
& \leq & 
\left(\Ee A\right)^2 \left( 1 + \eps \right), 
\end{array} \]

\noindent
where the last line holds provided we chose $w_1$ sufficiently large. 
Thus, we have $\Var(A) \leq (\eps+o(1))\left(\Ee A\right)^2$.
By Chebyschev's inequality we have

\[ \Pee_{\alpha, \lambda} ( A < (1-\eps^{1/4}) \Ee A ) \leq \frac{\eps+o(1)}{\sqrt{\eps}} = \sqrt{\eps} + o(1). \]

\noindent
Sending $\eps$ to zero finishes the proof.
\end{proof}

\begin{lemma}\label{lem:GiantAtMostc}
A.a.s., in $\GPo$ there are at least $(1-c(\alpha,\nu)-o(1))N$ vertices that are in components
of order $o(N)$.
\end{lemma}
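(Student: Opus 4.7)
The plan is to show that for every $\eps>0$, a.a.s.~at least $(1-c(\alpha,\nu)-\eps)N$ vertices of $\GPo$ lie in components of size at most some constant $M=M(\eps,\alpha,\nu)$, which is in particular $o(N)$. Set $\lambda:=\nu\alpha/\pi$, pick $K$ with $e^{-\alpha K}<\eps/4$, and invoke Lemma~\ref{lem:U} to obtain constants $n,h\geq n$ (with $h$ large enough that $n+e^{(n+h)/2}\leq e^h$) so that $\Pee_{\alpha,\lambda}(U(y;n,h))\geq 1-\theta(y;\alpha,\lambda)-\eps/4$ uniformly in $y\in[0,K]$; set $M:=n$. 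For $v\in\Vtil$ at $(x_v,y_v)$ with $y_v\leq K$ define the goodness event $G_v:=(a)\cap(b)\cap(d)$, where $(a)$ asserts that the component of $v$ in the subgraph of $\Gammatil$ induced on $[x_v-e^h,x_v+e^h]\times[0,h]$ has at most $n$ vertices and lies in $[x_v-n,x_v+n]\times[0,n]$; $(b)$ asserts that $\Vtil$ is disjoint from the escape region $R^{(b)}_v:=\{(x,y):h<y\leq R/2-n,\,|x-x_v|_{\pi e^{R/2}}\leq n+e^{(n+y)/2}\}$; and $(d)$ asserts that $\Vtil$ is disjoint from the deep region $R^{(d)}_v:=\{(x,y):R/2-n<y\leq R,\,|x-x_v|_{\pi e^{R/2}}\leq n+\Delta(R-n,R-y)\}$, with $\Delta$ as in Lemma~\ref{lem:asymptotics}.

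On the a.a.s.~event of Lemmas~\ref{lem:coupling} and~\ref{lem:edges}, these three conditions force the $\GPo$-component of $v$ to equal its $(a)$-component $C^0_v$. Indeed, using $\Gammatil\subseteq\GPo$, any $\GPo$-edge $uw$ with $u\in C^0_v$ and $w\notin C^0_v$ satisfies $|x_v-x_w|_{\pi e^{R/2}}\leq n+\Delta(R-n,R-y_w)$; examining the sub-cases $y_w\leq h$ (where Lemma~\ref{lem:edges}(ii) promotes $uw$ to a $\Gammatil$-edge and the choice of $h$ forces $w$ into the $(a)$-box, so $w\in C^0_v$---contradiction), $h<y_w\leq R/2-n$ (where Lemma~\ref{lem:edges}(ii) again makes $uw$ a $\Gammatil$-edge with $|x_u-x_w|\leq e^{(n+y_w)/2}$, placing $w$ in $R^{(b)}_v$), and $y_w>R/2-n$ (directly placing $w$ in $R^{(d)}_v$) rules every possibility out. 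Lemma~\ref{lem:U} gives $\Pee((a)^c)\leq\theta(y_v;\alpha,\lambda)+\eps/4$, and Poisson-mean computations in the spirit of the proofs of Lemmas~\ref{lem:perc0} and~\ref{lem:thetactsifnoperc} yield $\Pee((b)^c)=O(e^{(1/2-\alpha)h})<\eps/4$ for $h$ large, and $\Pee((d)^c)\leq\Ee|\Vtil\cap R^{(d)}_v|=O(e^{\alpha n+(1/2-\alpha)R/2})=o(1)$, both crucially using $\alpha>1/2$. The Mecke formula (Theorem~\ref{thm:palm}) combined with $\int_0^K\theta(y;\alpha,\lambda)\alpha e^{-\alpha y}{\dd}y\leq c(\alpha,\nu)$ then gives
\[ \Ee|\{v\in\Vtil:\,y_v\leq K,\,G_v\}|\geq (1-c(\alpha,\nu)-\eps)N-o(N). \]

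For concentration set $A:=|\{v\in\Vtil:\,y_v\leq K,\,G_v\}|$ and write $A\geq B-D$, where $B$ counts vertices with $y_v\leq K$ satisfying only the local conditions $(a)\cap(b)$ and $D$ counts all vertices of $\Vtil$ that violate $(d)$. Since $(a)\cap(b)$ depends only on Poisson points within $x$-distance $O(e^{R/4})$ of $v$, a Mecke-based second-moment split into ``far'' pairs (which are independent) and ``close'' pairs (of total $f$-measure $O(e^{3R/4})$) yields $\Var B=O(e^{3R/4})=o((\Ee B)^2)$ given $\Ee B=\Theta(e^{R/2})=\Theta(N)$, so Chebyshev gives $B=(1+o_p(1))\Ee B$. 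A Mecke computation for $D$ analogous to the one for $(d)^c$ gives $\Ee D=O(e^{(1-\alpha)R/2+\alpha n})=o(N)$, and hence $D=o_p(N)$ by Markov. Combining, $A\geq B-D\geq (1-c(\alpha,\nu)-\eps)N-o_p(N)$ a.a.s., and under the coupling of Lemma~\ref{lem:coupling} each good $v$ corresponds to a vertex of $\GPo$ whose component has size at most $M=n=o(N)$, which concludes the proof. The main technical obstacle is that $R^{(d)}_v$ can have $x$-extent comparable to $\pi e^{R/2}$ for $y$ close to $R$, so the events $G_v$ and $G_{v'}$ remain correlated at large toroidal distances and a naive second-moment argument applied directly to $A$ would fail; the splitting $A\geq B-D$ sidesteps this by doing second moments only on the cleanly-local count $B$ and bounding the global bad count $D$ via a first-moment argument, which works precisely because the intensity $\lambda e^{-\alpha y}$ decays fast enough when $\alpha>1/2$.
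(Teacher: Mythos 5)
Your proof is correct and rests on the same core ingredients as the paper's (Lemma~\ref{lem:U}, the Mecke formula, a second-moment/Chebyshev step, and a first-moment bound for the ``escape'' contribution), but you implement the escape step in a genuinely different way. The paper takes the per-vertex event to be component smallness inside the growing box $[x-e^{R/4},x+e^{R/4}]\times[0,R/4]$, which is purely local (dependence radius $e^{R/4}$) so the second moment applies directly to $A$; it then separately bounds, by a global first-moment computation, the number of edges from $\{y\geq R/4\}$ to $\{y\leq n\}$ and argues that each such edge can usurp at most $n$ of the counted vertices. Your version instead makes the per-vertex certificate complete: $(a)\cap(b)\cap(d)$ deterministically pins the $\GPo$-component of $v$ to $C^0_v$ (via Lemma~\ref{lem:edges}(ii), noting $\Delta$ is decreasing in its first argument so $\Delta(r_u,r_w)\leq\Delta(R-n,R-y_w)$). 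This is conceptually cleaner than the paper's ``usurpation'' accounting, but, as you rightly observe, condition $(d)$ reaches across the whole torus and so kills locality; your $A\geq B-D$ split -- second moment on the local count $B$, Markov on the global bad count $D$ -- is exactly the right repair and is a nice alternative to the paper's edge-counting step. One small arithmetic slip: with $\Pee((d)^c)=O(e^{\alpha n+(\frac12-\alpha)R/2})$ and $N=\Theta(e^{R/2})$ you get $\Ee D=O(e^{\alpha n+(\frac34-\frac{\alpha}{2})R})$ rather than $O(e^{\alpha n+(1-\alpha)R/2})$, but since $\alpha>\frac12$ forces $\frac34-\frac{\alpha}{2}<\frac12$, the conclusion $\Ee D=o(N)$ is unaffected.
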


\begin{proof}
Let $\eps > 0$ be arbitrary and choose $K = K(\eps)$ such that $\int_K^\infty \alpha e^{-\alpha y}{\dd}y < \eps/4$.
We now choose $n_0 = n_0(\alpha,\nu\alpha/\pi, K, \eps/4)$ according to Lemma~\ref{lem:U}.

Now, let $A$ denote the number of vertices $(x,y)$ of $\Gammatil$ such that {\bf 1)} $y\leq K$ and 
$x \in (-\frac{\pi}{2}e^{R/2}+e^{R/4}, \frac{\pi}{2} e^{R/2}-e^{R/4} )$ 
{\bf 2)} the component of $(x,y)$ in $\Gamma[ \Vtil \cap [x-e^{R/4}, x+e^{R/4}]\times[0,R/4]]$ 
has at most $n$ vertices and is contained in $[x-n,x+n]\times[0,n]$.

For $(x,y) \in (-\frac{\pi}{2}e^{R/2}+e^{R/4}, \frac{\pi}{2} e^{R/2}-e^{R/4} ) \times [0,K]$ let us 
denote by $U_{x,y}$ the event that the component of $(x,y)$ in $\Gamma[ \{(x,y)\} \cup \Vtil \cap [x-e^{R/4}, x+e^{R/4}]\times[0,R/4]]$
has at most $n$ vertices and is contained in $[x-n,x+n]\times[0,n]$.
Similarly to~\eqref{eq:EA}, we find:

\[\begin{array}{rcl} 
\Ee A 
& = &
\displaystyle 
\int_0^K \int_{-\frac{\pi}{2}e^{R/2}+e^{R/4}}^{\frac{\pi}{2} e^{R/2}-e^{R/4}} 
\Pee( U_{x,y} ) \left(\frac{\nu\alpha}{\pi}\right) e^{-\alpha y}{\dd}x {\dd}y \\
& = & 
\displaystyle
\nu e^{R/2} \left(1-O(e^{-R/4}) \right)  
\int_0^K \Pee_{\alpha, \nu\alpha/\pi}( U(y; n, R/4 ) ) \alpha e^{-\alpha y}{\dd}y \\
& \geq & 
\displaystyle
N (1 - o(1))  \int_0^K (1-\theta(y;\alpha,\nu\alpha/\pi)-\eps/4) \alpha e^{-\alpha y}{\dd}y \\
& \geq & 
N \left(1-c(\alpha,\nu)-\eps/2-o(1)\right),
  \end{array} \]

\noindent
where we used that $R/4 \to \infty$ in the third line.
Similarly to the proof of Lemma~\ref{lem:GiantAtLeastc}, we have

\[ 
\Ee A(A-1) 
= 
\int_0^K \int_{-\frac{\pi}{2}e^{R/2}+e^{R/4}}^{\frac{\pi}{2} e^{R/2}-e^{R/4}} 
\int_0^K \int_{-\frac{\pi}{2}e^{R/2}+e^{R/4}}^{\frac{\pi}{2} e^{R/2}-e^{R/4}} 
\Pee_{\alpha, \lambda} ( U_{x,y} \cap U_{x',y'} ) \left(\frac{\nu\alpha}{\pi}\right)^2 e^{-\alpha y} e^{-\alpha y'} 
{\dd}x {\dd}y {\dd}x' {\dd}y'. 
\]

\noindent
We remark that $U_{x,y}, U_{x',y'}$ are independent if $|x-x'| > 2e^{R/4}$.
This gives

\[  
 \Ee A(A-1) \leq \left(\Ee A\right)^2 + \Ee A \cdot \frac{2\nu}{\pi} e^{R/4}
 = \left(\Ee A\right)^2 (1 + o(1) ),
\]

\noindent
since $\Ee A = \Omega(N)$ and $e^{R/4} = O(\sqrt{N})$.
Applying Chebyschev's inequality we thus find that $A \geq (1-c(\alpha,\nu)-\eps)N$ a.a.s.

Now suppose that $(x,y) \in \Vtil$ satisfies {\bf 1)} and {\bf 2)} above, and pick let $(x',y') \in \left(\eR\setminus [x-e^{R/4},x+e^{R/4}]\right)
\times [0,R/4]$ and $(x'', y'') \in [-n,n]\times[0,n]$.
Then we have that $|x''-x'| \geq e^{R/4}-n > e^{n/2+R/8} \geq e^{(y+y')/2}$ for $N$ sufficiently large (using that $n$ is fixed and $R\to\infty$).
This shows that every point $(x,y)$ counted by $A$ belongs to a component of size $\leq n$, unless 
there is an edge between one of the $\leq n$ points of the component of $(x,y)$ in the graph induced by 
$\Vtil \cap [x-e^{R/4},x+e^{R/4}]\times[0,R/4]$ and a point with $y$-coordinate bigger than $R/4$.

To finish the proof, it thus suffices to show that the number of edges $B$ of $\GPo$ that join a
vertex with $y$-coordinate at least $R/4$ to a vertex of $y$-coordinate at most $n$ is $o_{p}(N)$ 
(using Lemma~\ref{lem:asymptotics}, and Theorem~\ref{thm:palm}).
To this end, it suffices to show that $\Ee B = o(N)$. The above claim, then, 
will follow from Markov's inequality. 

Thus, we compute, with $\Delta(.,.)$ as in Lemma~\ref{lem:asymptotics}:

\[ \begin{array}{rcl} 
\Ee B 
& = & 
\left(\frac{\nu\alpha}{2\pi}\right)^2 \pi e^{R/2} \int_0^n \int_{R/4}^R \Delta(y,y') 
e^{-\alpha(y+y')} {\dd}y {\dd}y' \\
& = & 
O\left( e^{R/2} \int_0^n \int_{R/4}^R e^{(\frac12-\alpha)(y+y')} {\dd}y {\dd}y'
+ 
e^{-R/2} \int_{R/4}^R e^{(\frac32-\alpha)(y+y')} {\dd}y {\dd}y' \right) \\
& = & 
O\left(
e^{R(\frac12 + \frac14(\frac12-\alpha))} + e^{R(1-\alpha)} 
\right) \\
& = & 
o( e^{R/2 } ) = o( N ),
\end{array}
\]

\noindent
using Lemma~\ref{lem:asymptotics} in the second line and $\alpha > \frac12$ in the last line.
By Markov's inequality, this gives that, $B=o_p(N)$.
Thus, at most $n \cdot o_{p}(N) = o_{p}(N)$ of the vertices counted by $A$ are usurped by long edges into large components.
\end{proof}

We have now finished the proof of our main result, as Lemma~\ref{lem:PoissonApproxIsOk}, Lemma~\ref{lem:GiantAtLeastc} 
and Lemma~\ref{lem:GiantAtMostc} together complete the proof of Theorem~\ref{thm:main}.

\section{Discussion}
We considered the emergence of the giant component in the  KPKBV model of random graphs on the hyperbolic plane. 
We show that the number of vertices in the largest component of $G(N;\alpha, \nu)$ satisfies a law of large numbers 
and converges in probability to a constant $c(\alpha, \nu)$. We give this function as the integral of the probability 
that a point percolates in an infinite continuum percolation model, for (almost) all values of $\alpha$ and $\nu$. 
When $\alpha = 1$, we show that there exists a critical value $\nuc$, such that when $\nu$ ``crosses'' $\nuc$, the giant 
component emerges with high probability. However, we do not know whether a giant component exists when $\nu = \nuc$.  
If the answer this question were negative then that would imply that $c(1,\nu)$ is continuous. 
We however conjecture that the answer is positive.

\begin{conjecture} 
$c(1,\nuc) > 0$.  
\end{conjecture}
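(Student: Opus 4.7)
The plan is to exploit the scale invariance enjoyed by the continuum percolation model precisely at $\alpha = 1$. The map $\Phi_c : (x,y) \mapsto (e^c x,\, y+c)$ pushes the Poisson process $\Pcal_{1,\lambda}$ forward to one with the same intensity $\lambda e^{-y}$ (the factor $e^{-c}$ from $\lambda e^{-(y-c)}$ is cancelled by the Jacobian $e^c$), and preserves the adjacency rule $|x_1 - x_2| \leq e^{(y_1+y_2)/2}$ since $|e^c x_1 - e^c x_2| = e^c|x_1 - x_2|$ while $e^{(y_1+c+y_2+c)/2} = e^c \cdot e^{(y_1+y_2)/2}$. Because $\Phi_c(0,0) = (0,c)$, the function $y \mapsto \theta(y;1,\lambda)$ is constant; write $\Theta(\lambda) := \theta(y;1,\lambda)$. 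Substituting into \eqref{eq:cdef} gives $c(1,\nu) = \Theta(\nu/\pi)$, so the conjecture is equivalent to $\Theta(\lambdac) > 0$.

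Two pieces of machinery already in hand reduce the problem further. By Lemma~\ref{lem:couplingPoisson} and Lemma~\ref{lem:linkthetaperc}, $\Theta$ is non-decreasing in $\lambda$. By Lemma~\ref{lem:thetactsabove}, $\Theta$ is right-continuous, so
\[
\Theta(\lambdac) = \lim_{\lambda \downarrow \lambdac} \Theta(\lambda).
\]
The conjecture therefore reduces to a statement that the order parameter $\Theta$ has a \emph{jump discontinuity} at $\lambdac$: the phase transition is of ``first order.''

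The concrete route I would pursue is a scale-invariant crossing argument. For $k \geq 0$ let $B_k := [-e^k, e^k] \times [k, k+1]$ and define $H_k$ to be the event that $\Gamma(\Pcal_{1,\lambda} \cap B_k)$ contains a path from a vertex with $x$-coordinate in $[-e^k, -e^k/2]$ to one with $x$-coordinate in $[e^k/2, e^k]$. Under $\Phi_k$ the region $B_0$ maps to $B_k$ and the distribution of $\Pcal_{1,\lambda}$ is preserved, so $\Pee_{1,\lambda}(H_k) = \Pee_{1,\lambda}(H_0)$ for all $k$. If one could show that this common crossing probability stays bounded below by some $c_0 > 0$ as $\lambda \downarrow \lambdac$, then right-continuity (via Lemma~\ref{lem:VerwBegrCont} applied to cylinder approximations of $H_0$) would transfer the bound to $\lambda = \lambdac$. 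One could then glue the crossings in $B_0, B_1, B_2, \dots$ using Lemma~\ref{lem:cross}(ii) together with the ``hub point'' trick from the proof of Lemma~\ref{lem:Ch} (a single vertex slightly above $B_k$ is a.s.\ adjacent to every neighbour of the right end of the crossing in $B_k$ and of the left end of the crossing in $B_{k+1}$), producing an infinite path from near the origin with positive probability, and therefore $\Theta(\lambdac) > 0$.

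The crux of the argument, and the main obstacle, is the uniform lower bound $\inf_{\lambda > \lambdac}\Pee_{1,\lambda}(H_0) \geq c_0 > 0$. In the classical setting of nearest-neighbour $2$-dimensional Bernoulli percolation the analogous conclusion \emph{fails}: one has $\theta(p_c) = 0$ in spite of even stronger (conformal) symmetries. The hope specific to the present model is that the long-range edges, whose horizontal reach $e^{y/2}$ grows without bound, produce a non-vanishing crossing probability at criticality through ``hub'' points of moderately large $y$-coordinate, which at every scale are present with probability bounded away from $0$ uniformly in $\lambda$ near $\lambdac$. Turning this heuristic into a rigorous renormalisation-group coarse-graining at $\alpha = 1$, $\lambda = \lambdac$ is where I expect the bulk of the difficulty to lie; its resolution would also settle whether the natural analogue of $\theta(p_c) = 0$ holds or fails in long-range continuum percolation models with exponentially growing range.
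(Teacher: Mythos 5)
The statement you were asked to prove is labelled as a \emph{conjecture} in the paper: the authors explicitly write that they ``do not know whether $c(1,\nuc)$ is positive or $0$.'' There is no proof of it in the paper for your proposal to be compared with, and your proposal is, as you yourself say, a strategy with its central step unresolved rather than a proof. Given that, the right way to assess it is to check whether the pieces you claim are in fact correct, and to pinpoint where it stops being an argument.

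The scale-invariance observation is a genuinely useful one, but one of its stated consequences is wrong. The map $\Phi_c(x,y) = (e^c x, y+c)$, for $c>0$, sends $\eR\times[0,\infty)$ into $\eR\times[c,\infty)$, a \emph{proper} subset. What the Jacobian computation actually shows is that $\Phi_c(\Pcal_{1,\lambda})$ is distributed like $\Pcal_{1,\lambda}\cap(\eR\times[c,\infty))$, i.e.\ like $\Pcal_{1,\lambda}$ with the strip $\eR\times[0,c)$ deleted. Since $\Phi_c$ preserves adjacency, this yields
\[
\theta(0;1,\lambda)=\Pee\bigl((0,c)\text{ percolates in }\Gamma\bigl(\{(0,c)\}\cup(\Pcal_{1,\lambda}\cap\eR\times[c,\infty))\bigr)\bigr)\le\theta(c;1,\lambda),
\]
which is just the monotonicity in $y$ already noted in Lemma~\ref{lem:thetactsy}, not equality. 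The extra points in the strip $\eR\times[0,c)$ only add edges, so they can help $(0,c)$ percolate, and there is no reason to expect $\theta$ to be constant in $y$. (A full-plane model on $\eR\times\eR$ with intensity $\lambda e^{-y}$ would be honestly scale invariant, but it has infinite expected degree, cf.\ the integral in~\eqref{eq:ED}, so it cannot be used as a substitute.) Hence your identity $c(1,\nu)=\Theta(\nu/\pi)$ does not hold as written. The correct and much weaker reduction, which you could instead invoke directly, is Lemma~\ref{lem:linkthetaperc}: $c(1,\nuc)>0$ if and only if $\Pee_{1,\lambdac}(\text{percolation})=1$, because $\theta(\cdot;1,\lambdac)$ is either identically zero or everywhere positive.

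The crossing-event computation $\Pee_{1,\lambda}(H_k)=\Pee_{1,\lambda}(H_0)$ \emph{is} correct, because $H_k$ depends only on points in $B_k=\Phi_k(B_0)\subset\eR\times[0,\infty)$ and the restriction of $\Phi_k$ to $B_0$ is a measure-preserving, adjacency-preserving bijection onto $B_k$. But the remainder of the proposal hinges on the statement $\inf_{\lambda>\lambdac}\Pee_{1,\lambda}(H_0)\ge c_0>0$, which you offer no argument for, and which in the closely analogous two-dimensional Bernoulli setting is false (RSW/crossing probabilities at criticality go to zero as the aspect ratio is stretched, and $\theta(p_c)=0$). Establishing this uniform lower bound, or disproving it, is exactly the content of the open problem; your proposal restates the difficulty rather than removing it. In short: this is a reasonable research programme and the reduction to percolation at $\lambdac$ is correct, but the scale-invariance shortcut is flawed and the key estimate is missing, so this does not constitute a proof.
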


Or equivalently, we conjecture that $\Gamma_{1,\lambdac}$ percolates. We have no particular reason to 
believe that this is the case except that the standard arguments showing non-percolation at criticality in other models do not seem to work, 
and the fact that we are dealing with a model with arbitrarily long edges and there are long-range percolation models that 
do percolate at criticality (cf.~\cite{AizenmanNewman}).

Another very natural question is for which values $(\alpha,\nu)$ 
the function $c(\alpha, \nu)$ is differentiable.

\section*{Acknowledgements}

We thank an anonymous referee for spotting an oversight in one of our proofs.

\bibliographystyle{plain}
\bibliography{ReferencesHyperGiantNew}

\appendix

\section{Explicit bounds on $\arccos(1-x)$ and $\sqrt{1+x}$}

For completeness, we spell out the derivation of some explicit bounds on $\arccos(1-x)$ and $\sqrt{1+x}$that we have used in the proof of 
Theorem~\ref{thm:main}.

\begin{lemma}\label{lem:sqrtlemma}
 For $x \in [-1,1]$, we have $1+\frac{x}{2}-100x^2 \leq \sqrt{1+x} \leq 1 + \frac{x}{2}$.
\end{lemma}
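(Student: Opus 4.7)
The upper bound $\sqrt{1+x} \leq 1 + x/2$ is the easy half. I would simply square both sides: $(1+x/2)^2 = 1 + x + x^2/4 \geq 1+x$, and since $1 + x/2 \geq 1/2 > 0$ for $x \geq -1$, taking square roots yields the bound.

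For the lower bound, I would split into cases according to whether the right-hand side is nonnegative. First solving $1 + x/2 - 100 x^2 = 0$ shows that $1 + x/2 - 100x^2 \leq 0$ whenever $|x| \geq 1/10$ (roughly), and in that range the inequality $\sqrt{1+x} \geq 1 + x/2 - 100x^2$ is trivial since $\sqrt{1+x} \geq 0$. So it suffices to handle the small-$x$ range, say $|x| \leq 1/10$, where the bound is actually nontrivial.

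For $|x| \leq 1/10$, I would apply Taylor's theorem to $f(t) := \sqrt{1+t}$ around $t=0$ with Lagrange remainder: for some $\xi$ between $0$ and $x$,
\[
\sqrt{1+x} \;=\; 1 + \tfrac{x}{2} - \frac{x^2}{8(1+\xi)^{3/2}}.
\]
Since $\xi \in [-1/10, 1/10]$ we have $(1+\xi)^{3/2} \geq (9/10)^{3/2} > 1/2$, so the remainder is bounded in absolute value by $x^2/4 \leq 100 x^2$. This yields $\sqrt{1+x} \geq 1 + x/2 - 100 x^2$ in the small-$x$ regime and completes the proof.

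There is no real obstacle here; the only thing to watch out for is that a naive Taylor-remainder argument fails uniformly on all of $[-1,1]$ because $(1+\xi)^{3/2}$ vanishes as $\xi \to -1$. The case split above bypasses this issue, since the nontrivial content of the inequality only lives in a neighborhood of $0$, where the Lagrange remainder is well-controlled and the constant $100$ is enormously loose.
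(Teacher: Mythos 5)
Your upper bound is exactly the paper's argument. For the lower bound you take a genuinely different route: the paper squares $\sqrt{1+x}\geq 1+\tfrac{x}{2}-100x^2$, reduces to the quadratic inequality $10^4x^2-100x-200+\tfrac14\leq 0$, and checks the endpoints by convexity; you instead invoke Taylor's theorem with Lagrange remainder, which is more conceptual and makes transparent why the constant $100$ is so generous. Both work, and your case split is exactly the right idea.

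There is, however, one small imprecision in the way you glue the two regimes together. You assert that $1+\tfrac{x}{2}-100x^2\leq 0$ for $|x|\geq \tfrac{1}{10}$, and use this to dismiss the large-$|x|$ range as trivial. But the positive root of $1+\tfrac{x}{2}-100x^2=0$ is
\[
x_+=\frac{\tfrac12+\sqrt{400.25}}{200}\approx 0.1025 > \tfrac{1}{10},
\]
so the quadratic is still (slightly) \emph{positive} on $(\tfrac{1}{10}, x_+)$, and the ``$\sqrt{1+x}\geq 0$'' argument does not directly cover it, while your Taylor estimate is only stated for $|x|\leq\tfrac{1}{10}$. This is easy to repair in either of two ways: (i) extend your Taylor argument to $|x|\leq\tfrac19$, say, where the bound $(1+\xi)^{3/2}>\tfrac12$ still holds with room to spare; or (ii) observe that for $x\geq\tfrac{1}{10}$ one has $100x^2\geq 10x\geq \tfrac{x}{2}$, hence $1+\tfrac{x}{2}-100x^2\leq 1\leq\sqrt{1+x}$, which disposes of the entire right-hand regime without reference to the sign of the quadratic. (The left-hand regime $x\leq -\tfrac{1}{10}$ is genuinely trivial, since there $100x^2\geq 1$ forces the right-hand side below $\tfrac{x}{2}<0$.) With either patch in place, your proof is correct and complete.
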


\begin{proof}
 The upper bound follows immediately from $(1+x/2)^2 = 1+x+x^2/4 \geq 1+x$.
 For the lower bound, observe that this certainly holds when $x \leq - \frac{1}{10}$ or $x\geq \frac{1}{10}$. 
 Squaring, we see that we need to show that, for all $-\frac{1}{10} < x < \frac{1}{10}$:
 
 \[ 1-x \geq 1 + x - (200-\frac14) x^2 - 100 x^3 + 10^4 x^4, \]
 
 \noindent
 which is equivalent to showing that, for all $-\frac{1}{10} \leq x <  \frac{1}{10}$:
 
 \[ 10^4 x^2 - 100 x - 200 + \frac14 \leq 0. \] 
 
\noindent
By convexity, it suffices to verify this only for $x=\pm\frac{1}{10}$.
(Which is easily seen to hold.)
\end{proof}

\begin{lemma}\label{lem:coslemma}
For all $x \in [0,1]$, we have $1-\frac{x^2}{2} \leq \cos(x) \leq 1 - \frac{x^2}{2} + \frac{x^4}{24}$. 
\end{lemma}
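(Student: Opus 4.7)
The plan is to prove both bounds by a short bootstrap from the elementary inequality $\sin t \le t$ (valid for all $t \ge 0$), repeatedly integrating a known inequality on $[0,x]$ to generate the next one. The base inequality $\sin t \le t$ for $t \ge 0$ follows at once from the fact that $h(t) := t - \sin t$ vanishes at $0$ and has $h'(t) = 1 - \cos t \ge 0$.

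First, integrating $\sin t \le t$ from $0$ to $x$ yields $1 - \cos x \le x^2/2$, which is precisely the lower bound $\cos x \ge 1 - x^2/2$ asserted by the lemma. Integrating this new inequality $\cos t \ge 1 - t^2/2$ from $0$ to $x$ then gives $\sin x \ge x - x^3/6$, and one final integration of $\sin t \ge t - t^3/6$ from $0$ to $x$ produces $1 - \cos x \ge x^2/2 - x^4/24$, which rearranges to the upper bound $\cos x \le 1 - x^2/2 + x^4/24$.

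There is no real obstacle here, as each step is a single integration of a monotone inequality on a compact interval. In fact, no use is made of the hypothesis $x \le 1$: the argument produces both bounds for every $x \ge 0$, and by the evenness of $\cos$ they then extend to all real $x$. One could alternatively quote Taylor's theorem with Lagrange remainder directly, but the integration-bootstrap above keeps the argument self-contained and avoids having to bound $\cos^{(k)}$ on an interval.
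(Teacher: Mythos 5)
Your proof is correct. Each step of the bootstrap checks out: $\sin t \le t$ on $[0,\infty)$ because $t \mapsto t - \sin t$ vanishes at $0$ and has non-negative derivative; integrating on $[0,x]$ gives $1-\cos x \le x^2/2$; integrating that gives $\sin x \ge x - x^3/6$; and integrating once more gives $1-\cos x \ge x^2/2 - x^4/24$, which is the upper bound. The paper's own proof is a one-liner, merely invoking ``the Taylor expansion for $\cos$'' (implicitly, the alternating-series estimate on the partial sums of $\sum (-1)^k x^{2k}/(2k)!$). Your integration bootstrap is a different and more self-contained route: it avoids quoting either the Taylor remainder theorem or the alternating-series criterion, and --- as you correctly observe --- it dispenses with the hypothesis $x \le 1$ entirely, yielding both bounds for all $x \ge 0$ (and hence for all real $x$ by evenness). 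The alternating-series route the paper has in mind does use the restriction on $x$ (to ensure the successive terms decrease), so your version is strictly more general, at the mild cost of a few extra lines.
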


\begin{proof}
This can be easily seen from the Taylor expansion for $\cos$.
\end{proof}

\begin{lemma}\label{lem:arccoslemma}
For $x \in [0,1]$, we have $\sqrt{2x} \leq \arccos(1-x) \leq \sqrt{2x} + 1000 x^{3/2}$.
\end{lemma}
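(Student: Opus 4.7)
The plan is to derive both bounds from Lemma~\ref{lem:coslemma} by setting $y := \arccos(1-x)$, so that $\cos y = 1-x$ and $y \in [0,\pi/2]$. I will treat the lower and upper bounds separately, with the lower bound being essentially immediate and the upper bound requiring a bootstrap.

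For the lower bound, the inequality $\cos y \leq 1 - y^2/2 + y^4/24$ will not help; I use instead the left half of Lemma~\ref{lem:coslemma}, namely $\cos y \geq 1 - y^2/2$, which (after checking that its hypothesis $y \in [0,1]$ is satisfied, or observing that for $y \in [1,\pi/2]$ the inequality still holds by a separate elementary check, e.g.\ Taylor's theorem with remainder) gives $1-x \geq 1 - y^2/2$, hence $y \geq \sqrt{2x}$.

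For the upper bound, I apply the right half of Lemma~\ref{lem:coslemma} to get $1 - x \leq 1 - y^2/2 + y^4/24$, i.e.\ $y^2 \leq 2x + y^4/12$. Since $y \leq \pi/2 < \sqrt{3}$, we have $y^4 \leq 3 y^2$, which yields $y^2 \leq 2x + y^2/4$, and hence $y^2 \leq 8x/3$. Substituting this crude estimate back into the relation $y^2 \leq 2x + y^4/12$ gives $y^2 \leq 2x + (8x/3)^2/12 \leq 2x(1 + Cx)$ for some absolute constant $C$. Taking square roots and applying the upper bound $\sqrt{1+u} \leq 1 + u/2$ from Lemma~\ref{lem:sqrtlemma} (valid for $u \geq 0$) gives $y \leq \sqrt{2x}\,(1 + Cx/2)$, and therefore $y - \sqrt{2x} \leq (C\sqrt{2}/2)\, x^{3/2}$. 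Since this constant is far smaller than $1000$, the upper bound of the lemma follows comfortably.

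The only very mild obstacle is the bootstrap step: one needs a first crude bound on $y^2$ before the final bound can be derived, but the a priori estimate $y \leq \pi/2$ makes this routine. No sharpness is required since the stated constant $1000$ is wildly loose.
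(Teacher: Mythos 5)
Your proof is correct and reaches the desired bound with room to spare. Both you and the paper start the same way (set $y:=\arccos(1-x)$, apply the two Taylor inequalities of Lemma~\ref{lem:coslemma} to $\cos y = 1-x$, and for the lower bound immediately get $y \geq \sqrt{2x}$). The difference is in how the quartic inequality $y^2 \leq 2x + y^4/12$ is resolved. The paper views it as a quadratic in $y^2$, solves it exactly by the quadratic formula, discards the spurious root using $y \leq \pi$, and then expands the resulting $6\bigl(1-\sqrt{1-\tfrac23 x}\bigr)$ with Lemma~\ref{lem:sqrtlemma}. You instead use a two-step bootstrap: the a priori bound $y\leq \pi/2 < \sqrt{3}$ gives $y^4 \leq 3y^2$, which yields the crude bound $y^2 \leq \tfrac{8x}{3}$; feeding that back in gives $y^2 \leq 2x + O(x^2)$, and Lemma~\ref{lem:sqrtlemma} finishes. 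The two routes are comparable in length; the paper's is slightly more direct but requires handling the two branches of the quadratic formula, while your bootstrap avoids that case split at the cost of an extra substitution. One small remark, applicable to both arguments: Lemma~\ref{lem:coslemma} is stated only for arguments in $[0,1]$, whereas $y$ can range up to $\pi/2 > 1$; you flag this for the lower bound, and the same comment applies to the use of $\cos y \leq 1 - y^2/2 + y^4/24$ in the upper bound (both inequalities do in fact hold for all $y\in[0,\pi/2]$, so the argument is sound, but worth noting explicitly).
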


\begin{proof}
Let us write $y := \arccos(1-x)$.
Using Lemma~\ref{lem:coslemma} we have 

\[ 1-x = \cos(y) \geq 1 - \frac{y^2}{2}. \]
 
\noindent
Form this, the lower bound $y \geq \sqrt{2x}$ follows immediately.
For the upper bound we use that (again by Lemma~\ref{lem:coslemma}):

\[ 1-x \leq 1 - \frac{y^2}{2} + \frac{y^4}{24}, \]

\noindent
or in other words:

\[ \frac{y^4}{12} - y^2 + 2x \geq 0. \]

\noindent
By the quadratic formula, this is equivalent to:

\[ y^2 \leq 6(1 - \sqrt{1-\frac23 x}) \text{ or } y^2 \geq 6(1 + \sqrt{1-\frac23 x}). \]

Since the range of $\arccos$ is $[-1,1]$, we see that we must be in the first of these two cases.
Using Lemma~\ref{lem:sqrtlemma}, we see that

\[ y^2 \leq 6( 1 - (1-\frac{x}{3}-100\frac49 x^2)) = 2x+\frac{800}{3}x^2. \]

\noindent
Since $y \geq 0$, we see that

\[ 
y 
\leq 
\sqrt{2x} \cdot \sqrt{1+\frac{400}{3}x} 
  \leq
 \sqrt{2x} \cdot (1 + \frac{200}{3}x ) 
 \leq 
 \sqrt{2x} + 1000 x^{3/2},
\]

\noindent
where we have used Lemma~\ref{lem:sqrtlemma}.
\end{proof}

\end{document}